\documentclass[]{article}

\usepackage[margin=1.5in]{geometry} 

\usepackage{latexsym}
\usepackage{amsmath}
\usepackage{amsfonts}
\usepackage{mathrsfs}
\usepackage{amstext}
\usepackage{amssymb}
\usepackage{amsthm}       

\numberwithin{equation}{section}

\newtheorem{theorem}{Theorem}[section]
\newtheorem{proposition}[theorem]{Proposition}

\newtheorem{definition}[theorem]{Definition}

\newtheorem{lemma}[theorem]{Lemma}

\begin{document}

\title{A singularly perturbed non-ideal transmission problem and application to
the effective conductivity of a periodic composite}

\author{Matteo Dalla Riva \footnote{M.~Dalla Riva acknowledges financial support from the Foundation for Science and Technology (FCT) via the post-doctoral grant SFRH/BPD/64437/2009. His work was supported also by {\it FEDER} funds through {\it COMPETE}--Operational Programme Factors of Competitiveness (``Programa Operacional Factores de Competitividade'') and by Portuguese funds through the {\it Center for Research and Development in Mathematics and Applications} (University of Aveiro) and the Portuguese Foundation for Science and Technology (``FCT--Funda\c{c}\~{a}o para a Ci\^{e}ncia e a Tecnologia''), within project PEst-C/MAT/UI4106/2011 with COMPETE number FCOMP-01-0124-FEDER-022690. }\and  Paolo Musolino \footnote{P.~Musolino acknowledges the financial support of the ``Fondazione Ing.~Aldo Gini''.}
}

\date{}

\maketitle

\noindent
{\bf Abstract:} We investigate the effective thermal conductivity of a two-phase composite with thermal resistance at the interface. The composite is obtained by introducing into an infinite homogeneous matrix a periodic set of inclusions of a different material. The diameter of each inclusion is assumed to be proportional to a positive real parameter $\epsilon$.  Under suitable assumptions, we show that the effective conductivity can be continued real analytically in the parameter $\epsilon$ around the degenerate value $\epsilon=0$, in correspondence of which the inclusions collapse to points.\footnote{Part of the results presented here have been announced in \cite{DaMu12a}.}  

\vspace{11pt}

\noindent
{\bf Keywords:} transmission problem; singularly perturbed domain; periodic composite; non-ideal contact conditions; effective conductivity; real analytic continuation

\noindent
{\bf AMS:} 35J25; 31B10; 45A05; 74E30; 74G10; 74M15

\section{Introduction}
\label{introd}
We consider the heat conduction in a periodic two-phase composite with thermal resistance at the two-phase interface. The composite consists of a matrix and of a periodic set of inclusions. The matrix and the inclusions are filled with two (possibly different) homogeneous and isotropic heat conductor materials.  We assume that each inclusion  occupies a bounded domain of $\mathbb{R}^n$ of diameter proportional to a parameter $\epsilon>0$. The normal component of the heat flux is assumed to be continuous at the two-phase interface, while we impose that the temperature field displays a jump proportional to the normal heat flux by means of a parameter $\rho(\epsilon)>0$. In physics, the appearance of such a discontinuity in the temperature field is a well known  phenomenon and has been  largely investigated since 1941, when Kapitza carried out the first systematic study of thermal interface behaviour in liquid helium (see, {\it e.g.},  Swartz and Pohl \cite{SwPo89},  Lipton \cite{Li98} and references therein). The aim of this paper is to study the behaviour of the effective conductivity of the composite when the parameter $\epsilon$ tends to $0$ and the size of the inclusions collapses. The expression defining the effective conductivity of a composite with imperfect contact conditions was introduced by Benveniste and Miloh in \cite{BeMi86} by generalizing the dual theory of the effective behaviour of composites with perfect contact (see also Benveniste \cite{Be86} and for a review  Dryga{\'s} and Mityushev \cite{DrMi09}). By the argument of Benveniste and Miloh, in order to evaluate the effective conductivity, one has to study the thermal distribution of the composite when so called ``homogeneous conditions" are prescribed.  
To do so, we now introduce a particular transmission problem with non-ideal contact conditions where we impose that the temperature field displays a fixed jump along a certain direction and is periodic in all the other directions (cf.~problem \eqref{bvpe} below). For the sake of completeness, non-homogeneous boundary conditions at the two-phase interface are also investigated.

We fix once for all 
\[
n\in {\mathbb{N}}\setminus\{0,1 \}\,,\qquad (q_{11},\dots,q_{nn})\in]0,+\infty[^{n}\,.
\]
Then we introduce the periodicity cell $Q$ and the diagonal matrix $q$ by setting
\[
Q\equiv\Pi_{i=1}^{n}]0,q_{ii}[\,,\qquad q\equiv \left(\delta_{h,i}q_{ii}\right)_{(h,i)\in\{1,\dots,n\}^2}\,.
\]
Here $\delta_{h,i}\equiv 1$ if $h=i$ and $\delta_{h,i}\equiv 0$ if $h\neq i$.  We denote by $|Q|_n$ the $n$-dimensional measure of the fundamental cell $Q$ and by $q^{-1}$ the inverse matrix of $q$. Clearly, 
$
q {\mathbb{Z}}^{n}\equiv  \{qz:\,z\in{\mathbb{Z}}^{n}\}
$
is the set of vertices of a periodic subdivision of ${\mathbb{R}}^{n}$ corresponding to the fundamental cell $Q$.

Then we consider $\alpha\in]0,1[$ and a subset $\Omega$ of ${\mathbb{R}}^{n}$ satisfying the following assumption.
\begin{equation}
\begin{split}
&\text{$\Omega$ is a bounded open connected subset of $\mathbb{R}^{n}$ of  class  $C^{1,\alpha}$}
\\
&\text{such that $\mathbb{R}^{n}\setminus{\mathrm{cl}}\Omega$ is  connected   and   that   $0\in \Omega$.}\label{dom}
\end{split}
\end{equation}
The symbol `${\mathrm{cl}}$' denotes the closure. Let now $p\in Q$ be fixed. Then there exists $\epsilon_{0}\in\mathbb{R}$ such that
\begin{equation}
\label{e0}
\epsilon_{0}\in]0,+\infty[\,, \qquad p+\epsilon {\mathrm{cl}}\Omega\subseteq Q
\qquad\forall \epsilon\in]-\epsilon_{0},\epsilon_{0}[\,.
\end{equation}
To shorten our notation, we set
\[
\Omega_{p,\epsilon}\equiv p+\epsilon\Omega\qquad\forall\epsilon\in{\mathbb{R}}\,.
\]
Then we introduce the periodic domains
\[
{\mathbb{S}}[\Omega_{p,\epsilon}]\equiv\bigcup_{z\in {\mathbb{Z}}^{n}}\left(
qz+\Omega_{p,\epsilon}
\right)\,,
\qquad
{\mathbb{S}}[\Omega_{p,\epsilon}]^{-}\equiv
{\mathbb{R}}^{n}\setminus{\mathrm{cl}}{\mathbb{S}} [\Omega_{p,\epsilon}]\,,
\]
for all $\epsilon\in ]-\epsilon_{0},\epsilon_{0}[ $. 

Next, we take two positive constants $\lambda^+$, $\lambda^-$, a function $f$ in the Schauder space $C^{0,\alpha}(\partial \Omega)$ and with zero integral on $\partial \Omega$, a function $g$ in $C^{0,\alpha}(\partial \Omega)$, and a function $\rho$ from $]0,\epsilon_{0}[$ to $]0,+\infty[$, and for each $j \in \{1,\dots,n\}$ we consider the following transmission problem for a pair of functions $(u^+_j,u^-_j)\in C^{1,\alpha}_{\mathrm{loc}}(\mathrm{cl}\mathbb{S}[\Omega_{p,\epsilon}])\times C^{1,\alpha}_{\mathrm{loc}}(\mathrm{cl}\mathbb{S}[\Omega_{p,\epsilon}]^{-})$:
\begin{equation}
\label{bvpe}
\left\{
\begin{array}{ll}
\Delta u^+_j=0 & {\mathrm{in}}\ {\mathbb{S}}[\Omega_{p,\epsilon}]\,,\\
\Delta u^-_j=0 & {\mathrm{in}}\ {\mathbb{S}}[\Omega_{p,\epsilon}]^{-}\,,
\\
u^+_j(x+q_{hh}e_h)=u^+_j(x)+\delta_{h,j}q_{hh} & \forall x \in \mathrm{cl}\mathbb{S}[\Omega_{p,\epsilon}]\, ,\\
& \forall h \in\{1,\dots,n\}\,,\\
u^-_j(x+q_{hh}e_h)=u^-_j(x)+\delta_{h,j}q_{hh} & \forall x \in \mathrm{cl}\mathbb{S}[\Omega_{p,\epsilon}]^{-}\, ,\\
& \forall h \in\{1,\dots,n\}\,,\\
\lambda^-\frac{\partial u^-_j}{\partial\nu_{ \Omega_{p,\epsilon} }}(x)
-
\lambda^+\frac{\partial u^+_j}{\partial\nu_{ \Omega_{p,\epsilon} }}(x)=f((x-p)/\epsilon)& \forall x\in  \partial\Omega_{p,\epsilon}\,,\\
\lambda^+\frac{\partial u^+_j}{\partial\nu_{ \Omega_{p,\epsilon} }}(x)+\frac{1}{\rho(\epsilon)}\bigl(u^+_j(x)-u^-_j(x)\bigr)=g((x-p)/\epsilon)& \forall x\in  \partial\Omega_{p,\epsilon}\,,\\
\int_{\partial \Omega_{p,\epsilon}}u^+_j(x)\, d\sigma_x=0\, ,
\end{array}
\right.
\end{equation}
for all $\epsilon\in]0,\epsilon_{0}[$, 
where $\nu_{ \Omega_{p,\epsilon}}$ denotes the outward unit normal to $ \partial
\Omega_{p,\epsilon}$. Here $\{e_{1}$,\dots, $e_{n}\}$ denotes the canonical basis of ${\mathbb{R}}^{n}$. 

We observe that the functions $u^+_j$ and $u^-_j$ represent the temperature field in the inclusions occupying the periodic set ${\mathbb{S}}[\Omega_{p,\epsilon}]$ and in the matrix occupying ${\mathbb{S}}[\Omega_{p,\epsilon}]^{-}$, respectively.  The parameters $\lambda^+$ and $\lambda^-$ play the role of thermal conductivity of the materials which fill  the inclusions and the matrix, respectively. The parameter $\rho(\epsilon)$ plays the role of the interfacial thermal resistivity. The fifth condition in \eqref{bvpe} describes the jump of the normal heat flux across the two-phase interface  and the sixth condition describes the jump of the temperature field. In particular, if $f$ and $g$ are identically $0$, then the normal heat flux is continuous and the temperature field has a jump proportional to the  normal heat flux by means of the parameter $\rho(\epsilon)$. The third and fourth conditions in \eqref{bvpe} imply that the temperature distributions  $u^+_j$ and $u^-_j$ have a jump equal to $q_{jj}$ in the direction $e_j$ and are periodic in all the other directions. Finally, the seventh condition in \eqref{bvpe} is an auxiliary condition which we introduce  to guarantee the uniqueness of the solution $(u^+_j,u^-_j)$. Such a condition does not interfere in the definition of the effective conductivity, which is invariant for constant modifications of the temperature field.

We also observe that the boundary value problem in \eqref{bvpe} generalizes transmission problems which have been largely investigated in connection with the theory of heat conduction in two-phase periodic composites with imperfect contact conditions (cf., \textit{e.g.}, Castro and Pesetskaya \cite{CaPe10}, Castro, Pesetskaya, and Rogosin \cite{CaPeRo09},  Dryga{\'s} and Mityushev \cite{DrMi09}, Lipton \cite{Li98}, Mityushev \cite{Mi01}).

Due to the presence of the factor $1/\rho(\epsilon)$, the boundary condition may display a singularity as $\epsilon$ tends to $0$. In this paper, we consider the case in which
\begin{equation}
\label{varrhorast}
\text{the limit $\lim_{\epsilon\to 0^+}\frac{ \epsilon}{\rho(\epsilon)}$  exists  finite in  $\mathbb{R} $}.
\end{equation}
Assumption \eqref{varrhorast} will allow us to analyse problem \eqref{bvpe} around the degenerate value $\epsilon=0$. We also note that we make no regularity assumption on the function $\rho$. If assumption \eqref{varrhorast} holds, then we set
\begin{equation}\label{gast}
r_{\ast}\equiv\lim_{\epsilon\to 0^+}\frac{ \epsilon}{\rho(\epsilon)}\, .
\end{equation}

If $\epsilon \in ]0,\epsilon_0[$, then the solution in $C^{1,\alpha}_{\mathrm{loc}}(\mathrm{cl}\mathbb{S}[\Omega_{p,\epsilon}])\times C^{1,\alpha}_{\mathrm{loc}}(\mathrm{cl}\mathbb{S}[\Omega_{p,\epsilon}]^{-})$ of problem \eqref{bvpe} is unique and we denote it by $(u^+_j[\epsilon], u^-_j[\epsilon])$. Then we introduce the effective conductivity matrix $\lambda^{\mathrm{eff}}[\epsilon]$ with $(k,j)$-entry  $\lambda^{\mathrm{eff}}_{kj}[\epsilon]$ defined by means of the following.

\begin{definition}\label{def:S}
Let $\alpha\in]0,1[$. Let $p\in Q$. Let $\Omega$ be as in \eqref{dom}. Let $\epsilon_{0}$ be as in \eqref{e0}.  Let $\lambda^+, \lambda^- \in ]0,+\infty[$. Let $f,\,g \in C^{0,\alpha}(\partial \Omega)$ and $\int_{\partial\Omega}f\,d\sigma=0$. Let $\rho$ be a function from $]0,\epsilon_0[$ to $]0,+\infty[$. Let $(k,j) \in \{1,\dots,n\}^2$. We set
\[
\begin{split}
\lambda^{\mathrm{eff}}_{kj}[\epsilon]\equiv &\frac{1}{|Q|_n}\Biggl(\lambda^+\int_{\Omega_{p,\epsilon}}\frac{\partial u^+_j[\epsilon](x)}{\partial x_k} \, dx + \lambda^-\int_{Q\setminus \mathrm{cl}\Omega_{p,\epsilon}}\frac{\partial u^-_j[\epsilon](x)}{\partial x_k} \, dx \Biggr)\\
&+\frac{1}{|Q|_n}\int_{\partial\Omega_{p,\epsilon}}f((x-p)/\epsilon)x_k\, d\sigma_x\qquad\qquad\qquad\qquad\qquad\forall \epsilon \in ]0,\epsilon_0[\,,
\end{split}
\]
where $(u^+_j[\epsilon], u^-_j[\epsilon])$ is the unique solution in $C^{1,\alpha}_{\mathrm{loc}}(\mathrm{cl}\mathbb{S}[\Omega_{p,\epsilon}])\times C^{1,\alpha}_{\mathrm{loc}}(\mathrm{cl}\mathbb{S}[\Omega_{p,\epsilon}]^{-})$ of problem \eqref{bvpe}.
\end{definition}

We observe that Definition \ref{def:S} extends that of Benveniste and Miloh to the case of non-homogeneous boundary conditions and coincide with the classical definition  when $f$ and $g$ are identically $0$ (cf.~Benveniste \cite{Be86} and  Benveniste and Miloh \cite{BeMi86}). 

Next, if $(k,j)\in\{1,\dots,n\}^2$,  we pose the following question.
\begin{equation}\label{question}
\text{What  can be said on the map 
$\epsilon\mapsto \lambda^{\mathrm{eff}}_{kj} [\epsilon]$ when $\epsilon$ is close to $0$ and 
positive?} 
\end{equation}

Questions of this type have long been investigated with the methods of Asymptotic Analysis. 
Thus for example,  one could resort to Asymptotic Analysis and may succeed  to write out an asymptotic  expansion for $\lambda^{\mathrm{eff}}_{kj} [\epsilon]$. In this sense, we mention the works of Ammari and Kang \cite[Ch.~5]{AmKa07}, Ammari, Kang, and Touibi \cite{AmKaTo05}. We also mention Ammari, Kang, and Kim \cite{AmKaKi05} where the authors consider anisotropic heat conductors,  Ammari, Kang, and Lim \cite{AmKaLi06} where effective elastic properties are investigated, and Ammari, Garapon, Kang, and Lee \cite{AmGaKaLe} for the analysis of effective viscosity properties. For the application of asymptotic analysis to general elliptic problems we refer to Maz'ya, Nazarov, and Plamenewskij \cite{MaNaPl00} (see also  
Maz'ya, Movchan, and Nieves \cite{MaMoNi11} for mesoscale asymptotic approximations). For further references see, {\it e.g.}, Lanza de Cristoforis and the second author \cite{LaMu12}. Furthermore,  boundary value problems in domains with periodic inclusions have been  analysed, at least for the two dimensional case, with the method of functional equations (cf., {\it e.g.}, Castro and Pesetskaya \cite{CaPe10}, Castro, Pesetskaya, and Rogosin \cite{CaPeRo09},   Dryga{\'s} and Mityushev \cite{DrMi09}, Mityushev \cite{Mi01}).

Here we answer the question in \eqref{question} by showing that 
\[
\lambda^{\mathrm{eff}}_{kj} [\epsilon]=\lambda^-\delta_{k,j}+\epsilon^n\Lambda_{kj}[\epsilon,\epsilon/\rho(\epsilon)]
\] for $\epsilon>0$ small, where $\Lambda_{kj}$ is a real analytic map defined in a neighbourhood of the pair $(0,r_\ast)$.
We observe that our approach does have its advantages.  Indeed, if for 
example we know that $\epsilon/\rho(\epsilon)$ equals for $\epsilon>0$ a real analytic function defined in a whole neighbourhood of $\epsilon=0$, then we know that $\lambda^{\mathrm{eff}}_{kj}[\epsilon]$ can be expanded into a power series for $\epsilon$ small. This is the case if for example $\rho(\epsilon)=\epsilon$ or $\rho$ is constant. Such an approach has been carried out in the case of a simple hole, \textit{e.g.}, in Lanza de Cristoforis  \cite{La10} (see also \cite{DaMu12}), and has later been    extended to problems related to the system of equations of the linearized elasticity in \cite{DaLa10a} and to the Stokes system in \cite{Da11}, and to the case of problems  in an infinite periodically perforated domain in \cite{LaMu12, Mu12a}.

The paper is organized as follows. In \S \ref{nota}, we introduce some standard notation. In \S \ref{spaces}, \S \ref{plpotentials}, and \S \ref{prel} we show some preliminary results. In \S \ref{finteq}, we formulate our problem \eqref{bvpe} in terms of integral equations. In \S \ref{fure}, we investigate the asymptotic behaviour of $u^-_j[\epsilon]$ and $u^+_j[\epsilon]$. In \S \ref{eff}, we exploit the results of \S \ref{fure} to answer question \eqref{question}. Finally, in \S \ref{conrem}, we make some remarks and present some possible extensions of this work.

\section{Some notation}\label{nota}

We  denote the norm on 
a   normed space ${\mathcal X}$ by $\|\cdot\|_{{\mathcal X}}$. Let 
${\mathcal X}$ and ${\mathcal Y}$ be normed spaces. We endow the  
space ${\mathcal X}\times {\mathcal Y}$ with the norm defined by 
$\|(x,y)\|_{{\mathcal X}\times {\mathcal Y}}\equiv \|x\|_{{\mathcal X}}+
\|y\|_{{\mathcal Y}}$ for all $(x,y)\in  {\mathcal X}\times {\mathcal 
Y}$, while we use the Euclidean norm for ${\mathbb{R}}^{n}$.
The symbol ${\mathbb{N}}$ denotes the 
set of natural numbers including $0$.   If $A$ is a 
matrix, then  
 $A_{ij}$ denotes 
the $(i,j)$-entry of $A$. Let 
${\mathbb{D}}\subseteq {\mathbb {R}}^{n}$. Then $\mathrm{cl}{\mathbb{D}}$ 
denotes the 
closure of ${\mathbb{D}}$ and $\partial{\mathbb{D}}$ denotes the boundary of ${\mathbb{D}}$.  For all $R>0$, $ x\in{\mathbb{R}}^{n}$, 
$x_{j}$ denotes the $j$-th coordinate of $x$, 
$| x|$ denotes the Euclidean modulus of $ x$ in
${\mathbb{R}}^{n}$, and ${\mathbb{B}}_{n}( x,R)$ denotes the ball $\{
y\in{\mathbb{R}}^{n}:\, | x- y|<R\}$.
 
Let $\Omega$ be an open 
subset of ${\mathbb{R}}^{n}$. 
 Let $r\in {\mathbb{N}}\setminus\{0\}$.
Let $f\in \left(C^{m}(\Omega)\right)^{r}$. The 
$s$-th component of $f$ is denoted $f_{s}$, and $Df$ denotes the matrix
$\left(\frac{\partial f_s}{\partial
x_l}\right)_{  (s,l)\in \{1,\dots ,r\}\times \{1,\dots ,n\}       }$. 
For a multi-index  $\eta\equiv
(\eta_{1},\dots ,\eta_{n})\in{\mathbb{N}}^{n}$ we set $|\eta |\equiv
\eta_{1}+\dots +\eta_{n}  $. Then $D^{\eta} f$ denotes
$\frac{\partial^{|\eta|}f}{\partial
x_{1}^{\eta_{1}}\dots\partial x_{n}^{\eta_{n}}}$.    The
subspace of $C^{m}(\Omega )$ of those functions $f$ whose derivatives $D^{\eta }f$ of
order $|\eta |\leq m$ can be extended with continuity to 
$\mathrm{cl}\Omega$  is  denoted $C^{m}(
\mathrm{cl}\Omega )$. 
The
subspace of $C^{m}(\mathrm{cl}\Omega ) $  whose
functions have $m$-th order derivatives which are
uniformly H\"{o}lder continuous  with exponent  $\alpha\in
]0,1[$ is denoted $C^{m,\alpha} (\mathrm{cl}\Omega )$. The subspace of $C^{m}(\mathrm{cl}\Omega ) $ of those functions $f$ such that $f_{|{\mathrm{cl}}(\Omega\cap{\mathbb{B}}_{n}(0,R))}\in
C^{m,\alpha}({\mathrm{cl}}(\Omega\cap{\mathbb{B}}_{n}(0,R)))$ for all $R\in]0,+\infty[$ is denoted $C^{m,\alpha}_{{\mathrm{loc}}}(\mathrm{cl}\Omega ) $.  

Now let $\Omega $ be a bounded
open subset of  ${\mathbb{R}}^{n}$. Then $C^{m}(\mathrm{cl}\Omega)$ 
and $C^{m,\alpha }({\mathrm{cl}}
\Omega)$ are endowed with their usual norm and are well known to be 
Banach spaces.
We say that a bounded open subset $\Omega$ of ${\mathbb{R}}^{n}$ is of class 
$C^{m}$ or of class $C^{m,\alpha}$, if ${\mathrm{cl}}
\Omega$ is a 
manifold with boundary imbedded in 
${\mathbb{R}}^{n}$ of class $C^{m}$ or $C^{m,\alpha}$, respectively. We define the spaces $C^{k,\alpha}(\partial\Omega)$ for $k\in\{0,\dots,m\}$ by exploiting the local parametrizations  (cf., \textit{e.g.}, Gilbarg and Trudinger~\cite[\S 6.2]{GiTr83}). The trace operator from $C^{k,\alpha}({\mathrm{cl}}\Omega)$ to
$C^{k,\alpha}(\partial\Omega)$ is linear and continuous. 
 For standard properties of functions 
in Schauder spaces, we refer the reader to Gilbarg and 
Trudinger~\cite{GiTr83}
(see also  Lanza de Cristoforis \cite[\S 2, Lem.~3.1, 4.26, Thm.~4.28]{La91}, 
 Lanza de Cristoforis and Rossi \cite[\S 2]{LaRo04}).  
We denote by 
$
\nu_{\Omega}
$
the outward unit normal to $\partial\Omega$ and by $d\sigma$ the area element on $\partial\Omega$. We retain the standard notation for the Lebesgue space $L^{1}(\partial\Omega)$ of integrable functions. 

For the 
definition and properties of real analytic operators, we refer, {\it e.g.}, to Deimling \cite[p.~150]{De85}. In particular, we mention that the 
pointwise product in Schauder spaces is bilinear and continuous, and 
thus real analytic
(cf., \textit{e.g.}, Lanza de Cristoforis and Rossi \cite[pp.~141, 142]{LaRo04}).

\section{Spaces of bounded and periodic functions}\label{spaces}

 If $\Omega$ is an arbitrary open subset of ${\mathbb{R}}^{n}$, $k\in {\mathbb{N}}$, $\beta\in]0,1]$, we set
\[
C^{k}_{b}({\mathrm{cl}}\Omega)\equiv
\{
u\in C^{k}({\mathrm{cl}}\Omega):\,
D^{\gamma}u\ {\mathrm{is\ bounded}}\ \forall\gamma\in {\mathbb{N}}^{n}\
{\mathrm{such\ that}}\ |\gamma|\leq k
\}\,,
\]
and we endow $C^{k}_{b}({\mathrm{cl}}\Omega)$ with its usual  norm
\[
\|u\|_{ C^{k}_{b}({\mathrm{cl}}\Omega) }\equiv
\sum_{|\gamma|\leq k}\sup_{x\in {\mathrm{cl}}\Omega }|D^{\gamma}u(x)|\qquad\forall u\in C^{k}_{b}({\mathrm{cl}}\Omega)\,. 
\]
Then we set
\[
C^{k,\beta}_{b}({\mathrm{cl}}\Omega)\equiv
\{
u\in C^{k,\beta}({\mathrm{cl}}\Omega):\,
D^{\gamma}u\ {\mathrm{is\ bounded}}\ \forall\gamma\in {\mathbb{N}}^{n}\
{\mathrm{such\ that}}\ |\gamma|\leq k
\}\,,
\]
and we endow $C^{k,\beta}_{b}({\mathrm{cl}}\Omega)$ with its usual  norm
\[
\|u\|_{ C^{k,\beta}_{b}({\mathrm{cl}}\Omega) }\equiv
\sum_{|\gamma|\leq k}\sup_{x\in {\mathrm{cl}}\Omega }|D^{\gamma}u(x)|
+\sum_{|\gamma| = k}|D^{\gamma}u: {\mathrm{cl}}\Omega |_{\beta}
\qquad\forall u\in C^{k,\beta}_{b}({\mathrm{cl}}\Omega)\,,
\]
where $|D^{\gamma}u: {\mathrm{cl}}\Omega |_{\beta}$ denotes the $\beta$-H\"{o}lder constant of $D^{\gamma}u$.

Next we turn to periodic domains. If $\Omega_Q$ is an arbitrary subset of ${\mathbb{R}}^{n}$  such that
 ${\mathrm{cl}}\Omega_Q\subseteq Q$, then we set
\[
{\mathbb{S}} [\Omega_Q]\equiv 
\bigcup_{z\in{\mathbb{Z}}^{n} }(qz+\Omega_Q)=q{\mathbb{Z}}^{n}+\Omega_Q\,,
\qquad
{\mathbb{S}} [\Omega_Q]^{-}\equiv {\mathbb{R}}^{n}\setminus{\mathrm{cl}}{\mathbb{S}} [\Omega_Q]\,.
\]
 Then a function $u$  from ${\mathrm{cl}}{\mathbb{S}}[\Omega_Q]$ or from ${\mathrm{cl}}{\mathbb{S}}[\Omega_Q]^{-}$ to ${\mathbb{R}}$ 
is $q$-periodic if $u(x+q_{hh}e_{h})=u(x)$ for all $x$ in the domain of definition of $u$ and for all $h\in\{1,\dots,n\}$.
If $\Omega_Q$ is an open subset of ${\mathbb{R}}^{n}$  such that ${\mathrm{cl}}\Omega_Q\subseteq Q$ and if 
$k\in {\mathbb{N}}$ and $\beta\in]0,1[$, then we denote by $C^{k}_{q}({\mathrm{cl}}{\mathbb{S}}[\Omega_Q] )$, $C^{k,\beta}_{q}({\mathrm{cl}}{\mathbb{S}}[\Omega_Q] )$, $C^{k}_{q}({\mathrm{cl}}{\mathbb{S}}[\Omega_Q]^{-})$, and $C^{k,\beta}_{q}({\mathrm{cl}}{\mathbb{S}}[\Omega_Q]^{-} )$ the subsets of the $q$-periodic functions belonging to $C^{k}_{b}({\mathrm{cl}}{\mathbb{S}}[{\Omega_Q}])$, to $C^{k,\beta}_{b}({\mathrm{cl}}{\mathbb{S}}[{\Omega_Q}])$, to $C^{k}_{b}({\mathrm{cl}}{\mathbb{S}}[{\Omega_Q}]^- )$, and to $C^{k,\beta}_{b}({\mathrm{cl}}{\mathbb{S}}[{\Omega_Q}]^-)$, respectively. We regard the sets $C^{k}_{q}({\mathrm{cl}}{\mathbb{S}}[{\Omega_Q}])$, $C^{k,\beta}_{q}({\mathrm{cl}}{\mathbb{S}}[{\Omega_Q}])$, $C^{k}_{q}({\mathrm{cl}}{\mathbb{S}}[{\Omega_Q}]^-)$,  $C^{k,\beta}_{q}({\mathrm{cl}}{\mathbb{S}}[{\Omega_Q}]^-)$ as Banach subspaces of $C^{k}_{b}({\mathrm{cl}}{\mathbb{S}}[{\Omega_Q}])$, of $C^{k,\beta}_{b}({\mathrm{cl}}{\mathbb{S}}[{\Omega_Q}])$, of $C^{k}_{b}({\mathrm{cl}}{\mathbb{S}}[{\Omega_Q}]^-)$,  of $C^{k,\beta}_{b}({\mathrm{cl}}{\mathbb{S}}[{\Omega_Q}]^-)$, respectively.

\section{The periodic simple layer potential}\label{plpotentials}

As is well known there exists a $q$-periodic tempered distribution $S_{q,n}$ such that
\[
\Delta S_{q,n}=\sum_{z\in {\mathbb{Z}}^{n}}\delta_{qz}-\frac{1}{|Q|_n}\,,
\]
where $\delta_{qz}$ denotes the Dirac distribution with mass in $qz$. 
The distribution $S_{q,n}$ is determined  up to an additive constant, and we can take
\[
S_{q,n}(x)\equiv-\sum_{ z\in {\mathbb{Z}}^{n}\setminus\{0\} }
\frac{1}{     |Q|_n4\pi^{2}|q^{-1}z|^{2}   }e^{2\pi i (q^{-1}z)\cdot x}
\,,
\]
where the series converges in the sense of distributions on $\mathbb{R}^n$ (cf., \textit{e.g.}, Ammari and Kang~\cite[p.~53]{AmKa07},   
\cite[Theorems 3.1, 3.5]{LaMu11}). 
Then, $S_{q,n}$ is real analytic in ${\mathbb{R}}^{n}\setminus q{\mathbb{Z}}^{n}$ and is locally integrable in ${\mathbb{R}}^{n}$
 (cf., {\textit{e.g.}}, \cite[Theorem 3.5]{LaMu11}).

Let
$S_{n}$ be the function from ${\mathbb{R}}^{n}\setminus\{0\}$ to ${\mathbb{R}}$ defined by
\[
S_{n}(x)\equiv
\left\{
\begin{array}{lll}
\frac{1}{s_{n}}\log |x| \qquad &   \forall x\in 
{\mathbb{R}}^{n}\setminus\{0\},\quad & {\mathrm{if}}\ n=2\,,
\\
\frac{1}{(2-n)s_{n}}|x|^{2-n}\qquad &   \forall x\in 
{\mathbb{R}}^{n}\setminus\{0\},\quad & {\mathrm{if}}\ n>2\,,
\end{array}
\right.
\]
where $s_{n}$ denotes the $(n-1)$-dimensional measure of 
$\partial{\mathbb{B}}_{n}$. $S_{n}$ is well known to be the 
fundamental solution of the Laplace operator. 

Then  $S_{q,n}-S_{n}$ is analytic in $({\mathbb{R}}^{n}\setminus q{\mathbb{Z}}^{n})\cup\{0\}$ and we find convenient to set
\[
R_{q,n}\equiv S_{q,n}-S_{n}\qquad{\mathrm{in}}\ 
({\mathbb{R}}^{n}\setminus q{\mathbb{Z}}^{n})\cup\{0\}\,.
\]

We now introduce the classical simple layer potential. Let  $\alpha\in]0,1[$. Let $\Omega$ be a bounded open subset of ${\mathbb{R}}^{n}$ of class $C^{1,\alpha}$. Let $\mu\in C^{0,\alpha}(\partial\Omega)$. We set
\[
v[\partial\Omega,\mu](x)\equiv
\int_{\partial\Omega}S_{n}(x-y)\mu(y)\,d\sigma_{y}
\qquad\forall x\in {\mathbb{R}}^{n}\,.
\]
As is well known,  $v[\partial\Omega,\mu]$ is continuous in  ${\mathbb{R}}^{n}$, the function 
$v^{+}[\partial\Omega,\mu]\equiv v[\partial\Omega,\mu]_{|{\mathrm{cl}}\Omega}$ belongs to $C^{1,\alpha}({\mathrm{cl}}\Omega)$, and the function 
$v^{-}[\partial\Omega,\mu]\equiv v[\partial\Omega,\mu]_{|\mathbb{R}^n \setminus \Omega}$ belongs to $C^{1,\alpha}_{\mathrm{loc}}
(\mathbb{R}^n \setminus \Omega)$. Similarly,  we set
\[
w_{\ast}[\partial\Omega,\mu](x)\equiv
\int_{\partial\Omega}DS_{n}(x-y)\nu_{\Omega}(x)\mu(y)\,d\sigma_{y}
\qquad\forall x\in \partial \Omega\,.
\]
Then the function 
$w_{\ast}[\partial\Omega,\mu]$ belongs to $C^{0,\alpha}(\partial \Omega)$ and we have
\[
\frac{\partial }{\partial \nu_{\Omega}}v^\pm[\partial \Omega,\mu]=\mp \frac{1}{2}\mu + w_{\ast}[\partial \Omega,\mu]\qquad \text{on $\partial \Omega$}
\]
(cf., \textit{e.g.}, Miranda~\cite{Mi65},   Lanza de Cristoforis and Rossi \cite[Thm.~3.1]{LaRo04}). 

 If ${\mathcal{X}}$ is a vector subspace of $L^{1}(\partial\Omega )$, we find convenient to set
\[
{\mathcal{X}}_{0}\equiv
\left\{
f\in{\mathcal{X}}:\,\int_{\partial\Omega}f\,d\sigma=0 
\right\}\,.
\] 

Then we have the following well known result of classical potential theory. We note that statement (i) in Lemma \ref{lem0} below has been proved by Schauder \cite{Sc31,Sc32} for $n=3$, while the general case $n\geq 2$ is also valid and can be proved by slightly modifying the argument of Schauder \cite{Sc31,Sc32}. Statement (ii), instead, follows by Folland \cite[Prop.~3.19]{Fo95}, whereas statement (iii) by combining (i) and (ii).  

\begin{lemma}\label{lem0}  Let $\alpha\in]0,1[$. Let $\Omega$ be a bounded open subset of ${\mathbb{R}}^{n}$ of class $C^{1,\alpha}$. Then the following statements hold.
\begin{enumerate}
\item[(i)] The map from $C^{0,\alpha}(\partial \Omega)$ to $C^{0,\alpha}(\partial \Omega)$ which takes $\theta$ to $w_{\ast}[\partial \Omega,\theta]$ is compact.
\item[(ii)] If $\theta \in C^{0,\alpha}(\partial \Omega)_0$, then $w_{\ast}[\partial \Omega,\theta]\in C^{0,\alpha}(\partial \Omega)_0$.
\item[(iii)] The map from $C^{0,\alpha}(\partial \Omega)_0$ to $C^{0,\alpha}(\partial \Omega)_0$ which takes $\theta$ to $w_{\ast}[\partial \Omega,\theta]$ is compact.
\end{enumerate}
\end{lemma}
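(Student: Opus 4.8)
The plan is to reduce everything to the mapping properties of the single-layer potential and then invoke the cited results of Schauder, Folland, and Miranda. First I would recall that the operator $w_\ast[\partial\Omega,\cdot]$ is precisely the trace on $\partial\Omega$ of the normal derivative of the single-layer potential $v[\partial\Omega,\mu]$ (up to the jump term $\mp\frac12\mu$), so its analysis is equivalent to understanding the smoothing properties of $v[\partial\Omega,\mu]$ and the kernel $DS_n(x-y)\cdot\nu_\Omega(x)$ on $\partial\Omega$. The kernel has the form $c_n\,\frac{(x-y)\cdot\nu_\Omega(x)}{|x-y|^n}$; because $\Omega$ is of class $C^{1,\alpha}$, the numerator vanishes like $|x-y|^{1+\alpha}$ as $y\to x$ along $\partial\Omega$, so the kernel is weakly singular of order $n-1-\alpha$ on the $(n-1)$-dimensional manifold $\partial\Omega$. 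This is the structural fact that makes the operator smoothing.

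For statement (i), the strategy is: show that $\theta\mapsto w_\ast[\partial\Omega,\theta]$ maps $C^{0,\alpha}(\partial\Omega)$ continuously into $C^{0,\beta}(\partial\Omega)$ for some $\beta\in]\alpha,1[$ (or at least into a space compactly embedded in $C^{0,\alpha}(\partial\Omega)$), and then use the compactness of the embedding $C^{0,\beta}(\partial\Omega)\hookrightarrow C^{0,\alpha}(\partial\Omega)$, which holds because $\partial\Omega$ is compact. This gain of H\"older regularity is exactly the content of the classical potential-theoretic estimates of Schauder \cite{Sc31,Sc32}, valid for $n=3$ and extendable verbatim to all $n\geq2$ since the kernel estimates are dimension-independent in form; alternatively one can cite Miranda \cite{Mi65}. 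So (i) follows by combining the Schauder-type continuity estimate with the Ascoli–Arzel\`a / compact-embedding theorem.

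For statement (ii), I would argue that $\int_{\partial\Omega}w_\ast[\partial\Omega,\theta]\,d\sigma=0$ whenever $\int_{\partial\Omega}\theta\,d\sigma=0$. The natural way is to interpret the integral of the normal derivative of $v^-[\partial\Omega,\theta]$ over $\partial\Omega$ via the divergence theorem on the exterior domain: since $v^-[\partial\Omega,\theta]$ is harmonic in $\mathbb{R}^n\setminus\mathrm{cl}\Omega$ and, for $\int_{\partial\Omega}\theta\,d\sigma=0$, decays fast enough at infinity (like $|x|^{1-n}$ for $n\geq3$, and like $|x|^{1-n}$ after the logarithmic term cancels for $n=2$), Green's identity gives $\int_{\partial\Omega}\frac{\partial v^-}{\partial\nu_\Omega}\,d\sigma=0$; combined with the jump formula $\frac{\partial v^-}{\partial\nu_\Omega}=\frac12\theta+w_\ast[\partial\Omega,\theta]$ and $\int_{\partial\Omega}\theta\,d\sigma=0$, this yields $\int_{\partial\Omega}w_\ast[\partial\Omega,\theta]\,d\sigma=0$. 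This is precisely what is packaged in Folland \cite[Prop.~3.19]{Fo95}, so I would simply cite it. Statement (iii) is then immediate: by (ii) the operator restricts to a well-defined map $C^{0,\alpha}(\partial\Omega)_0\to C^{0,\alpha}(\partial\Omega)_0$, and by (i) it is the restriction of a compact operator, hence compact (a restriction of a compact operator to a closed invariant subspace is compact).

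The main obstacle is genuinely the regularity/compactness estimate in (i): verifying that the weakly singular integral operator with kernel of order $n-1-\alpha$ on a $C^{1,\alpha}$ hypersurface maps $C^{0,\alpha}$ boundedly into a strictly better H\"older space. This requires careful local-coordinate estimates on the oscillation of $w_\ast[\partial\Omega,\theta](x)-w_\ast[\partial\Omega,\theta](x')$ in terms of $|x-x'|$, splitting the integral into a near-diagonal part (handled by the $|x-y|^{1+\alpha}$ vanishing of $(x-y)\cdot\nu_\Omega(x)$) and a far part (handled by smoothness of the kernel away from the diagonal and the H\"older continuity of $\nu_\Omega$). Since the paper chooses to cite Schauder, Miranda, and Folland rather than reprove these facts, the proof here is essentially a bookkeeping argument: (i) is cited, (ii) is cited, and (iii) is deduced from (i) and (ii) by the elementary observation about restrictions of compact operators.
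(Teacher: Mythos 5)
Your plan matches the paper's: parts (i) and (ii) are disposed of by citing Schauder (resp.\ Miranda) and Folland, and (iii) is the formal deduction from (i) and (ii), exactly as the paper does. The only thing I would flag is a slip in the direction of the exponent gain in your sketch of (i). You propose to show that $w_{\ast}[\partial\Omega,\cdot]$ maps $C^{0,\alpha}(\partial\Omega)$ boundedly into $C^{0,\beta}(\partial\Omega)$ for some $\beta\in\,]\alpha,1[\,$ and then use the compact embedding $C^{0,\beta}\hookrightarrow C^{0,\alpha}$. For a domain that is merely of class $C^{1,\alpha}$ this gain is not available: the output $w_{\ast}[\partial\Omega,\theta]$ carries an explicit factor $\nu_{\Omega}(x)$, and $\nu_{\Omega}$ is only $C^{0,\alpha}$, so one cannot in general land in a better H\"older space than $C^{0,\alpha}$ (this is also visible from the jump formula $w_{\ast}[\partial\Omega,\mu]=\tfrac12\mu+\nu_{\Omega}\cdot\nabla v^{+}[\partial\Omega,\mu]_{|\partial\Omega}$, with $v^{+}[\partial\Omega,\mu]$ at best in $C^{1,\alpha}(\mathrm{cl}\Omega)$). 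The standard Schauder--Giraud estimate instead says that $w_{\ast}[\partial\Omega,\cdot]$ is bounded from $C^{0}(\partial\Omega)$ (or from $C^{0,\beta}(\partial\Omega)$ with $\beta<\alpha$) \emph{into} $C^{0,\alpha}(\partial\Omega)$; compactness of $w_{\ast}:C^{0,\alpha}(\partial\Omega)\to C^{0,\alpha}(\partial\Omega)$ then follows by precomposing with the compact embedding $C^{0,\alpha}(\partial\Omega)\hookrightarrow C^{0}(\partial\Omega)$. In other words, the compact embedding is placed on the domain side, not the range side. With that correction, everything you wrote is sound: the weak-singularity estimate of order $n-1-\alpha$ via the cancellation $(x-y)\cdot\nu_{\Omega}(x)=O(|x-y|^{1+\alpha})$ is the right mechanism; the argument for (ii) via the exterior Green identity (using the $O(|x|^{1-n})$ decay once $\int_{\partial\Omega}\theta\,d\sigma=0$, with the logarithm cancelling when $n=2$) is precisely what Folland's Proposition~3.19 encodes; and (iii) is immediate since $C^{0,\alpha}(\partial\Omega)_{0}$ is a closed $w_{\ast}$-invariant subspace by (ii), and the restriction of a compact operator to a closed invariant subspace is compact.
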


We now introduce the periodic simple layer potential. Let  $\alpha\in]0,1[$. Let $\Omega_Q$ be a bounded open subset of ${\mathbb{R}}^{n}$ of class $C^{1,\alpha}$ such that ${\mathrm{cl}}\Omega_Q\subseteq Q$. Let $\mu\in C^{0,\alpha}(\partial\Omega_Q)$. We set
\[
v_{q}[\partial\Omega_Q,\mu](x)\equiv
\int_{\partial\Omega_Q}S_{q,n}(x-y)\mu(y)\,d\sigma_{y}
\qquad\forall x\in {\mathbb{R}}^{n}\,.
\]
As is well known,  $v_q[\partial\Omega_Q,\mu]$ is continuous in  ${\mathbb{R}}^{n}$. Moreover, the function 
$v^{+}_{q}[\partial\Omega_Q,\mu]\equiv v_{q}[\partial\Omega_Q,\mu]_{|{\mathrm{cl}}{\mathbb{S}}[\Omega_Q]}$ belongs to $C^{1,\alpha}_{q}({\mathrm{cl}}{\mathbb{S}}[\Omega_Q])$, and  
$v^{-}_{q}[\partial\Omega_Q,\mu]\equiv v_{q}[\partial\Omega_Q,\mu]_{|{\mathrm{cl}}{\mathbb{S}}[\Omega_Q]^{-}}$ belongs to $C^{1,\alpha}_{q}
({\mathrm{cl}}{\mathbb{S}}[\Omega_Q]^{-})$. Similarly, we set
\[
w_{q,\ast}[\partial\Omega_Q,\mu](x)\equiv
\int_{\partial\Omega_Q}DS_{q,n}(x-y)\nu_{\Omega_Q}(x)\mu(y)\,d\sigma_{y}
\qquad\forall x\in \partial \Omega_Q\,.
\]
Then the function 
$w_{q,\ast}[\partial\Omega_Q,\mu]$ belongs to $C^{0,\alpha}(\partial \Omega_Q)$ and we have
\[
\frac{\partial }{\partial \nu_{\Omega_Q}}v^\pm_q[\partial \Omega_Q,\mu]=\mp \frac{1}{2}\mu + w_{q,\ast}[\partial \Omega_Q,\mu]\qquad \text{on $\partial \Omega_Q$}
\]
(cf., \textit{e.g.}, \cite[Theorem 3.7]{LaMu11}).

In the following lemma we have the periodic counterpart of Lemma \ref{lem0}.

\begin{lemma}\label{lem1}  Let  $\alpha\in]0,1[$. Let $\Omega_Q$ be a bounded open subset of ${\mathbb{R}}^{n}$ of class $C^{1,\alpha}$ such that ${\mathrm{cl}}\Omega_Q\subseteq Q$.  Then the following statements hold.
\begin{enumerate}
\item[(i)] The map from $C^{0,\alpha}(\partial \Omega_Q)$ to $C^{0,\alpha}(\partial \Omega_Q)$ which takes $\mu$ to $w_{q,\ast}[\partial \Omega_Q,\mu]$ is compact.
\item[(ii)] If $\mu \in C^{0,\alpha}(\partial \Omega_Q)_0$, then $w_{q,\ast}[\partial \Omega_Q,\mu]\in C^{0,\alpha}(\partial \Omega_Q)_0$.
\item[(iii)] The map from $C^{0,\alpha}(\partial \Omega_Q)_0$ to $C^{0,\alpha}(\partial \Omega_Q)_0$ which takes $\mu$ to $w_{q,\ast}[\partial \Omega_Q,\mu]$ is compact.
\end{enumerate}
\end{lemma}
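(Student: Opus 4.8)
The plan is to reduce Lemma~\ref{lem1} to the already-established non-periodic Lemma~\ref{lem0} by exploiting the decomposition $S_{q,n}=S_n+R_{q,n}$ with $R_{q,n}$ analytic on $(\mathbb{R}^n\setminus q\mathbb{Z}^n)\cup\{0\}$. First I would observe that for $x,y\in\partial\Omega_Q$ the kernel of $w_{q,\ast}[\partial\Omega_Q,\mu]$ splits as
\[
DS_{q,n}(x-y)\nu_{\Omega_Q}(x)=DS_n(x-y)\nu_{\Omega_Q}(x)+DR_{q,n}(x-y)\nu_{\Omega_Q}(x)\,,
\]
so that $w_{q,\ast}[\partial\Omega_Q,\mu]=w_\ast[\partial\Omega_Q,\mu]+w_{R,\ast}[\partial\Omega_Q,\mu]$, where $w_{R,\ast}[\partial\Omega_Q,\mu](x)\equiv\int_{\partial\Omega_Q}DR_{q,n}(x-y)\nu_{\Omega_Q}(x)\mu(y)\,d\sigma_y$. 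Since ${\mathrm{cl}}\Omega_Q\subseteq Q$, the difference $x-y$ stays in a compact subset of $(\mathbb{R}^n\setminus q\mathbb{Z}^n)\cup\{0\}$ for $x,y\in\partial\Omega_Q$, hence $DR_{q,n}$ is a bounded (indeed real analytic, in particular $C^\infty$) function of $x-y$ on that set.

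Next I would prove (i). The term $w_\ast[\partial\Omega_Q,\mu]$ is compact from $C^{0,\alpha}(\partial\Omega_Q)$ to itself by Lemma~\ref{lem0}(i). For the remainder term $w_{R,\ast}[\partial\Omega_Q,\cdot]$, the kernel $DR_{q,n}(x-y)\nu_{\Omega_Q}(x)$ is of class $C^{1,\alpha}$ (actually smooth) jointly in $(x,y)\in\partial\Omega_Q\times\partial\Omega_Q$, because $R_{q,n}$ is real analytic near the relevant compact set and $\partial\Omega_Q$ is of class $C^{1,\alpha}$; an integral operator with such a regular kernel maps $C^{0,\alpha}(\partial\Omega_Q)$ (or even $L^1(\partial\Omega_Q)$) continuously into $C^{1,\alpha}(\partial\Omega_Q)$, and the embedding $C^{1,\alpha}(\partial\Omega_Q)\hookrightarrow C^{0,\alpha}(\partial\Omega_Q)$ is compact by Ascoli--Arzel\`a. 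Composition of a bounded operator with a compact one is compact, so $w_{R,\ast}[\partial\Omega_Q,\cdot]$ is compact, and hence so is the sum $w_{q,\ast}[\partial\Omega_Q,\cdot]$. This proves (i).

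For (ii) I would use the defining property $\Delta S_{q,n}=\sum_{z\in\mathbb{Z}^n}\delta_{qz}-1/|Q|_n$. Fix $\mu\in C^{0,\alpha}(\partial\Omega_Q)_0$ and consider $v_q^-[\partial\Omega_Q,\mu]\in C^{1,\alpha}_q({\mathrm{cl}}\,\mathbb{S}[\Omega_Q]^-)$, which is harmonic in $\mathbb{S}[\Omega_Q]^-$ (the Dirac masses and the constant do not see the exterior, and one checks the mean-value term contributes nothing to $\Delta v_q^-$ since the density is supported on $\partial\Omega_Q$; more precisely $\Delta v_q^-[\partial\Omega_Q,\mu]=-\frac{1}{|Q|_n}\int_{\partial\Omega_Q}\mu\,d\sigma=0$ because $\mu$ has zero integral). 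Then, integrating over the periodicity cell and using the periodicity together with the divergence theorem, the fluxes through the outer faces of $Q$ cancel in pairs, so that
\[
\int_{\partial\Omega_Q}\Bigl(-\tfrac12\mu+w_{q,\ast}[\partial\Omega_Q,\mu]\Bigr)\,d\sigma
=\int_{\partial\Omega_Q}\frac{\partial}{\partial\nu_{\Omega_Q}}v_q^-[\partial\Omega_Q,\mu]\,d\sigma
=-\int_{Q\setminus{\mathrm{cl}}\Omega_Q}\Delta v_q^-[\partial\Omega_Q,\mu]\,dx=0\,,
\]
where the sign accounts for $\nu_{\Omega_Q}$ pointing into $Q\setminus{\mathrm{cl}}\Omega_Q$. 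Since $\int_{\partial\Omega_Q}\mu\,d\sigma=0$, this gives $\int_{\partial\Omega_Q}w_{q,\ast}[\partial\Omega_Q,\mu]\,d\sigma=0$, i.e. $w_{q,\ast}[\partial\Omega_Q,\mu]\in C^{0,\alpha}(\partial\Omega_Q)_0$, proving (ii). Finally (iii) follows immediately: by (ii) the map takes $C^{0,\alpha}(\partial\Omega_Q)_0$ into itself, and it is the restriction of the compact map from (i), hence compact. The main obstacle is the careful justification in (ii) that the cell-boundary flux terms cancel and that $v_q^-[\partial\Omega_Q,\mu]$ is genuinely harmonic in the exterior periodic domain; this is where the zero-integral hypothesis on $\mu$ is essential, exactly as in the non-periodic case, and it needs the divergence theorem applied on the bounded Lipschitz (indeed $C^{1,\alpha}$) domain $Q\setminus{\mathrm{cl}}\Omega_Q$ together with $q$-periodicity of $v_q^-$.
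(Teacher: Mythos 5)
Your overall strategy is sound, and parts (i) and (iii) are organized exactly as in the paper: split $S_{q,n}=S_n+R_{q,n}$, invoke Lemma~\ref{lem0}(i) for the singular part, and gain a derivative on the analytic remainder so that the compact embedding $C^{1,\alpha}(\partial\Omega_Q)\hookrightarrow C^{0,\alpha}(\partial\Omega_Q)$ finishes the job.

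There is, however, a genuine inaccuracy in your argument for (i). You assert that the kernel $DR_{q,n}(x-y)\nu_{\Omega_Q}(x)$ is of class $C^{1,\alpha}$ (indeed smooth) in $(x,y)$, and you use this to claim that $w_{R,\ast}[\partial\Omega_Q,\cdot]$ maps $C^{0,\alpha}(\partial\Omega_Q)$ into $C^{1,\alpha}(\partial\Omega_Q)$. This is false: since $\partial\Omega_Q$ is only of class $C^{1,\alpha}$, the outward normal $\nu_{\Omega_Q}$ is merely $C^{0,\alpha}$, so the kernel (with the normal built in) is only $C^{0,\alpha}$ in $x$, and a $C^{0,\alpha}$ kernel does not by itself yield compactness of the operator on $C^{0,\alpha}(\partial\Omega_Q)$. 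The correct and easy fix is exactly what the paper's equality \eqref{eq:lem1} does: pull the normal out of the integral and write $w_{R,\ast}[\partial\Omega_Q,\mu](x)=\sum_{j}(\nu_{\Omega_Q}(x))_j\int_{\partial\Omega_Q}\partial_{x_j}R_{q,n}(x-y)\mu(y)\,d\sigma_y$. Each inner integral has an analytic kernel and hence maps $C^{0,\alpha}(\partial\Omega_Q)$ boundedly into $C^{1,\alpha}(\partial\Omega_Q)$; composing with the compact embedding into $C^{0,\alpha}(\partial\Omega_Q)$ and then with pointwise multiplication by the fixed $C^{0,\alpha}$ functions $(\nu_{\Omega_Q})_j$ yields the desired compactness. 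You should restructure your argument accordingly.

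For (ii), your route is genuinely different from the paper's and is correct. The paper applies Fubini together with the periodic Gauss identity
\[
\int_{\partial\Omega_Q}\frac{\partial}{\partial\nu_{\Omega_Q}(x)}\bigl(S_{q,n}(y-x)\bigr)\,d\sigma_x=\frac{1}{2}-\frac{|\Omega_Q|_n}{|Q|_n}\qquad\forall y\in\partial\mathbb{S}[\Omega_Q]\,,
\]
which immediately gives $\int_{\partial\Omega_Q}w_{q,\ast}[\partial\Omega_Q,\mu]\,d\sigma=\bigl(\tfrac12-\tfrac{|\Omega_Q|_n}{|Q|_n}\bigr)\int_{\partial\Omega_Q}\mu\,d\sigma=0$. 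You instead prove the needed cancellation directly: you observe that $\Delta v_q^-[\partial\Omega_Q,\mu]=-\frac{1}{|Q|_n}\int_{\partial\Omega_Q}\mu\,d\sigma=0$ in $\mathbb{S}[\Omega_Q]^-$ when $\mu$ has zero integral, apply the divergence theorem on $Q\setminus\mathrm{cl}\Omega_Q$, and use $q$-periodicity to cancel the fluxes across opposite faces of $Q$. This is a self-contained derivation of precisely the consequence of the cited identity that is actually needed, at the cost of a slightly longer argument; both are perfectly legitimate. One small slip: the jump formula for the exterior trace reads $\frac{\partial}{\partial\nu_{\Omega_Q}}v_q^-[\partial\Omega_Q,\mu]=+\tfrac12\mu+w_{q,\ast}[\partial\Omega_Q,\mu]$, not $-\tfrac12\mu+w_{q,\ast}[\partial\Omega_Q,\mu]$ as you wrote; this is harmless here because $\int_{\partial\Omega_Q}\mu\,d\sigma=0$ kills the term either way, but it should be corrected.
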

\begin{proof}
We first consider (i). We note that
\begin{equation}\label{eq:lem1}
\begin{split}
&w_{q,\ast}[\partial\Omega_Q,\mu](x)\\
&\quad =w_{\ast}[\partial\Omega_Q,\mu](x)+\sum_{j=1}^n(\nu_{\Omega_Q}(x))_j\int_{\partial\Omega_Q}\partial_{x_j}R_{q,n}(x-y)\mu(y)\,d\sigma_y\quad \forall x\in\partial\Omega_Q\,,
\end{split}
\end{equation}
for all $\mu \in C^{0,\alpha}(\partial \Omega_Q)$. Then by the real analyticity of $\partial_{x_j}R_{q,n}$ in $(\mathbb{R}^n \setminus q\mathbb{Z}^n)\cup \{0\}$, by the compactenss of the imbedding of $C^{1,\alpha}(\partial \Omega_Q)$ into $C^{0,\alpha}(\partial \Omega_Q)$, by equality \eqref{eq:lem1}, and by Lemma \ref{lem0} (i), we deduce the validity of statement (i). Statement (ii) follows by Fubini's Theorem and by the well known identity
\[
\int_{\partial\Omega_Q}\frac{\partial}{\partial\nu_{\Omega_Q}(x)}
(S_{q,n}(y-x))\,d\sigma_{x}=\frac{1}{2}-\frac{|\Omega_Q|_n}{|Q|_n}
\qquad\forall y\in\partial{\mathbb{S}}[\Omega_Q]
\]
(cf., \textit{e.g.},  \cite[Lemma A.1]{LaMu12}). Here $|\Omega_Q|_n$ denotes the $n$-dimensional measure of $\Omega_Q$.  Finally, statement (iii) is a straightforward consequence of (i), (ii).\qquad\end{proof}

\section{Transmission problems with non-ideal contact conditions}\label{prel}

In this section we collect some preliminary results concerning transmission problems with non-ideal contact conditions.

We first have the following  uniqueness result for a periodic transmission problem, whose proof is based on a standard energy argument for periodic harmonic functions.

\begin{proposition}\label{prop:uniq}
Let $\alpha \in ]0,1[$. Let $\Omega_{Q}$ be a bounded open subset of $\mathbb{R}^n$ of class $C^{1,\alpha}$ such that $\mathbb{R}^n \setminus \mathrm{cl}\Omega_Q$ is connected and that $\mathrm{cl}\Omega_Q \subseteq Q$. Let $\lambda^+, \lambda^-, \gamma^\# \in ]0,+\infty[$. Let $(v^+,v^-) \in C^{1,\alpha}_q(\mathrm{cl}\mathbb{S}[\Omega_Q])\times C^{1,\alpha}_q(\mathrm{cl}\mathbb{S}[\Omega_Q]^{-})$ be such that
\begin{equation}
\label{eq:uniq}
\left\{
\begin{array}{ll}
\Delta v^+=0 & {\mathrm{in}}\ {\mathbb{S}}[\Omega_Q]\,,\\
\Delta v^-=0 & {\mathrm{in}}\ {\mathbb{S}}[\Omega_Q]^{-}\,,
\\
v^+(x+q_{hh}e_h)=v^+(x)& \forall x \in \mathrm{cl}\mathbb{S}[\Omega_Q]\, ,\ \forall h \in\{1,\dots,n\}\,,\\
v^-(x+q_{hh}e_h)=v^-(x) & \forall x \in \mathrm{cl}\mathbb{S}[\Omega_Q]^{-}\, ,\ \forall h \in\{1,\dots,n\}\,,\\
\lambda^-\frac{\partial v^-}{\partial\nu_{ \Omega_Q }}(x)
-
\lambda^+\frac{\partial v^+}{\partial\nu_{ \Omega_Q}}(x)=0& \forall x\in  \partial\Omega_Q\,,\\
\lambda^+\frac{\partial v^+}{\partial\nu_{\Omega_Q}}(x)+\gamma^\#\bigl(v^+(x)-v^-(x)\bigr)=0& \forall x\in  \partial\Omega_Q\,,\\
\int_{\partial \Omega_Q}v^+(x)\, d\sigma_x=0\, .
\end{array}
\right.
\end{equation}
Then $v^+=0$ on $\mathrm{cl}\mathbb{S}[\Omega_Q]$ and $v^-=0$ on $\mathrm{cl}\mathbb{S}[\Omega_Q]^-$.
\end{proposition}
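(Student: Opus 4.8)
The plan is to run the classical energy argument for periodic harmonic functions, working inside a single periodicity cell. Since $\mathrm{cl}\Omega_Q\subseteq Q$, the functions $v^+$ and $v^-$ are harmonic and of class $C^{1,\alpha}$ up to the boundary on $\mathrm{cl}\Omega_Q$ and on $\mathrm{cl}\,Q\setminus\Omega_Q$, respectively. Green's first identity applied to $v^+$ on $\Omega_Q$ gives
\[
\int_{\Omega_Q}|\nabla v^+|^2\,dx=\int_{\partial\Omega_Q}v^+\,\frac{\partial v^+}{\partial\nu_{\Omega_Q}}\,d\sigma\,,
\]
while applied to $v^-$ on $Q\setminus\mathrm{cl}\Omega_Q$, whose boundary is $\partial Q\cup\partial\Omega_Q$ and whose exterior normal on the portion $\partial\Omega_Q$ equals $-\nu_{\Omega_Q}$, it gives
\[
\int_{Q\setminus\mathrm{cl}\Omega_Q}|\nabla v^-|^2\,dx=\int_{\partial Q}v^-\,\frac{\partial v^-}{\partial\nu_Q}\,d\sigma-\int_{\partial\Omega_Q}v^-\,\frac{\partial v^-}{\partial\nu_{\Omega_Q}}\,d\sigma\,.
\]

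Next I would observe that the integral over $\partial Q$ vanishes: $\partial Q$ is the union of $n$ pairs of opposite faces, and the $q$-periodicity of $v^-$ and of $\nabla v^-$ makes the integrands on the two faces of each pair coincide while their outward normals are opposite, so each pair contributes zero. (Here the assumption $\mathrm{cl}\Omega_Q\subseteq Q$ is used, since it guarantees that $v^-$ is of class $C^1$ in a neighbourhood of $\partial Q$.) Then I would multiply the first identity by $\lambda^+$, the second by $\lambda^-$, add them, and use the fifth equation in \eqref{eq:uniq}, $\lambda^-\,\partial v^-/\partial\nu_{\Omega_Q}=\lambda^+\,\partial v^+/\partial\nu_{\Omega_Q}$ on $\partial\Omega_Q$, to rewrite the right-hand side as $\int_{\partial\Omega_Q}\lambda^+\,(\partial v^+/\partial\nu_{\Omega_Q})\,(v^+-v^-)\,d\sigma$; the sixth equation, $\lambda^+\,\partial v^+/\partial\nu_{\Omega_Q}=-\gamma^\#(v^+-v^-)$, then turns this into $-\gamma^\#\int_{\partial\Omega_Q}(v^+-v^-)^2\,d\sigma$. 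Collecting terms, this yields the energy identity
\[
\lambda^+\int_{\Omega_Q}|\nabla v^+|^2\,dx+\lambda^-\int_{Q\setminus\mathrm{cl}\Omega_Q}|\nabla v^-|^2\,dx+\gamma^\#\int_{\partial\Omega_Q}(v^+-v^-)^2\,d\sigma=0\,.
\]

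Since $\lambda^+,\lambda^-,\gamma^\#>0$ and all three summands are non-negative, each of them is zero; hence $\nabla v^+\equiv0$ on $\Omega_Q$, $\nabla v^-\equiv0$ on $Q\setminus\mathrm{cl}\Omega_Q$, and $v^+=v^-$ on $\partial\Omega_Q$. By $q$-periodicity, together with the continuity of $\nabla v^-$ across the cell walls, $\nabla v^+$ vanishes on all of $\mathbb{S}[\Omega_Q]$ and $\nabla v^-$ on all of $\mathbb{S}[\Omega_Q]^-$. Since $\mathbb{S}[\Omega_Q]^-$ is connected (which follows from the assumption that $\mathbb{R}^n\setminus\mathrm{cl}\Omega_Q$ is connected, bearing in mind $\mathrm{cl}\Omega_Q\subseteq Q$), the function $v^-$ is a constant, say $v^-\equiv c$; by periodicity and the interface equality, $v^+=v^-=c$ on $\partial\mathbb{S}[\Omega_Q]$. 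Every connected component of $\mathbb{S}[\Omega_Q]$ is bounded with non-empty boundary contained in $\partial\mathbb{S}[\Omega_Q]$, and $v^+$ is constant on it, so $v^+\equiv c$ on $\mathbb{S}[\Omega_Q]$ as well. Finally the normalization $\int_{\partial\Omega_Q}v^+\,d\sigma=0$ forces $c=0$, whence $v^+=0$ on $\mathrm{cl}\mathbb{S}[\Omega_Q]$ and $v^-=0$ on $\mathrm{cl}\mathbb{S}[\Omega_Q]^-$.

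I do not expect a genuine analytic obstacle: the scheme is standard and the computation is routine. The two points that require the most care are the cancellation of the $\partial Q$ term (which rests on periodicity together with $\mathrm{cl}\Omega_Q\subseteq Q$, so that opposite faces can be identified) and the passage from \emph{$\nabla v^\pm$ vanish} to \emph{$v^\pm$ vanish}, which needs the connectedness of the periodic exterior $\mathbb{S}[\Omega_Q]^-$ and the auxiliary condition $\int_{\partial\Omega_Q}v^+\,d\sigma=0$ to fix the remaining additive constant.
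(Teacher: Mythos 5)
Your proof is correct and is exactly the ``standard energy argument for periodic harmonic functions'' that the paper appeals to (the paper itself gives no details beyond that phrase). The Green-identity computation on $\Omega_Q$ and $Q\setminus\mathrm{cl}\Omega_Q$, the cancellation over $\partial Q$ by periodicity, the derivation of the energy identity from the two transmission conditions, and the final use of connectedness of $\mathbb{S}[\Omega_Q]^-$ together with the normalization $\int_{\partial\Omega_Q}v^+\,d\sigma=0$ are all sound and complete.
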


We now  study an integral operator which we need in order to solve a periodic transmission problem by means of periodic simple layer potentials. 

\begin{proposition}\label{prop:J}
Let $\alpha \in ]0,1[$. Let $\Omega_{Q}$ be a bounded open subset of $\mathbb{R}^n$ of class $C^{1,\alpha}$ such that $\mathbb{R}^n \setminus \mathrm{cl}\Omega_Q$ is connected and that $\mathrm{cl}\Omega_Q \subseteq Q$. Let $\lambda^+, \lambda^-, \gamma^\# \in ]0,+\infty[$. Let $J_{\gamma^\#}\equiv(J_{\gamma^\#,1},J_{\gamma^\#,2})$ be the operator from $(C^{0,\alpha}(\partial \Omega_Q)_0)^2$ to $(C^{0,\alpha}(\partial\Omega_Q)_0)^2$ defined by
\begin{align}
J_{\gamma^\#,1}[\mu^i,\mu^o]\equiv&\lambda^-\Bigl(\frac{1}{2}\mu^o+w_{q,\ast}[\partial \Omega_Q,\mu^o]\Bigr)-\lambda^+\Bigl(-\frac{1}{2}\mu^i+w_{q,\ast}[\partial \Omega_Q,\mu^i]\Bigr)\, ,\nonumber\\
J_{\gamma^\#,2}[\mu^i,\mu^o]\equiv&\lambda^+\Bigl(-\frac{1}{2}\mu^i+w_{q,\ast}[\partial \Omega_Q,\mu^i]\Bigr)\nonumber\\&+\gamma^\# \Bigl(v^+_q[\partial \Omega_Q,\mu^i]_{|\partial\Omega_Q}-\frac{1}{|\partial\Omega_Q|_{n-1}}\int_{\partial \Omega_Q}v^+_q[\partial \Omega_Q,\mu^i]\, d\sigma\nonumber\\
&-v^-_q[\partial \Omega_Q,\mu^o]_{|\partial\Omega_Q}+\frac{1}{|\partial\Omega_Q|_{n-1}}\int_{\partial \Omega_Q}v^-_q[\partial \Omega_Q,\mu^o]\, d\sigma\Bigr)\, ,\nonumber
\end{align}
for all $(\mu^i,\mu^o) \in (C^{0,\alpha}(\partial \Omega_Q)_0)^2$, where $|\partial\Omega_Q|_{n-1}$ denotes the $(n-1)$-dimensional measure of $\partial\Omega_Q$. Then $J_{\gamma^\#}$ is a linear homeomorphism.
\end{proposition}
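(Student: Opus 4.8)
The plan is to show that $J_{\gamma^\#}$ is a compact perturbation of a linear homeomorphism, and then invoke injectivity together with the Fredholm alternative. First I would rewrite $J_{\gamma^\#}$ in the form $J_{\gamma^\#}=A+K$, where $A$ collects the ``principal part'' coming from the jump relations of the normal derivative of the periodic simple layer potential, namely
\[
A[\mu^i,\mu^o]\equiv\Bigl(\tfrac{\lambda^-}{2}\mu^o+\tfrac{\lambda^+}{2}\mu^i,\ -\tfrac{\lambda^+}{2}\mu^i\Bigr)\,,
\]
and $K$ gathers all the remaining terms, i.e.\ those involving $w_{q,\ast}[\partial\Omega_Q,\cdot]$ and the traces of $v^\pm_q[\partial\Omega_Q,\cdot]$ (suitably normalized to have zero integral). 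The map $A$ is visibly a linear homeomorphism of $(C^{0,\alpha}(\partial\Omega_Q)_0)^2$ onto itself: from the second component one recovers $\mu^i$ (up to the factor $-\lambda^+/2$), and then from the first component one recovers $\mu^o$; since $\lambda^+,\lambda^->0$ this is a bounded linear bijection with bounded inverse, and it preserves the zero-integral condition because $w_{q,\ast}$ does (the $w_{q,\ast}$ terms are not in $A$, but the bare multiplication operators $\mu\mapsto c\mu$ trivially preserve zero integral).

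Next I would check that $K$ is compact from $(C^{0,\alpha}(\partial\Omega_Q)_0)^2$ to itself. The terms $w_{q,\ast}[\partial\Omega_Q,\cdot]$ are compact on $C^{0,\alpha}(\partial\Omega_Q)_0$ by Lemma~\ref{lem1}(iii). The terms $v^\pm_q[\partial\Omega_Q,\cdot]_{|\partial\Omega_Q}$ factor as the restriction to $\partial\Omega_Q$ of a function that, by the mapping properties recalled in \S\ref{plpotentials}, lies in $C^{1,\alpha}_q(\mathrm{cl}\mathbb{S}[\Omega_Q])$ or $C^{1,\alpha}_q(\mathrm{cl}\mathbb{S}[\Omega_Q]^-)$; hence by continuity of the trace operator they map $C^{0,\alpha}(\partial\Omega_Q)$ continuously into $C^{1,\alpha}(\partial\Omega_Q)$, and the compact imbedding of $C^{1,\alpha}(\partial\Omega_Q)$ into $C^{0,\alpha}(\partial\Omega_Q)$ makes them compact; subtracting the (rank-one, hence compact) averaging functionals keeps compactness and lands in the zero-integral subspace. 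Thus $K$ is compact and $J_{\gamma^\#}=A+K$ is a Fredholm operator of index $0$; equivalently $A^{-1}J_{\gamma^\#}=\mathrm{Id}+A^{-1}K$ with $A^{-1}K$ compact.

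By the Fredholm alternative it then suffices to prove that $J_{\gamma^\#}$ is injective. Here is where Proposition~\ref{prop:uniq} enters, and this is the step requiring the most care. Given $(\mu^i,\mu^o)$ in the kernel, I would set $v^+\equiv v^+_q[\partial\Omega_Q,\mu^i]-\frac{1}{|\partial\Omega_Q|_{n-1}}\int_{\partial\Omega_Q}v^+_q[\partial\Omega_Q,\mu^i]\,d\sigma$ and $v^-\equiv v^-_q[\partial\Omega_Q,\mu^o]-\frac{1}{|\partial\Omega_Q|_{n-1}}\int_{\partial\Omega_Q}v^-_q[\partial\Omega_Q,\mu^o]\,d\sigma$, and verify that the pair $(v^+,v^-)$ solves problem \eqref{eq:uniq}: harmonicity and $q$-periodicity are built into the periodic simple layer potential, the fifth and sixth conditions of \eqref{eq:uniq} are precisely $J_{\gamma^\#,1}[\mu^i,\mu^o]=0$ and $J_{\gamma^\#,2}[\mu^i,\mu^o]=0$ (the constant subtracted from $v^+$ and $v^-$ does not affect normal derivatives, and the sixth condition is written so that the constants cancel), and the seventh (zero-integral) condition holds by construction. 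Proposition~\ref{prop:uniq} then forces $v^+=0$ and $v^-=0$. To conclude $\mu^i=\mu^o=0$ I would argue that $v^+_q[\partial\Omega_Q,\mu^i]$ and $v^-_q[\partial\Omega_Q,\mu^o]$ are then constant on $\mathrm{cl}\mathbb{S}[\Omega_Q]$ and $\mathrm{cl}\mathbb{S}[\Omega_Q]^-$ respectively, so by continuity of $v_q[\partial\Omega_Q,\mu^i]$ and $v_q[\partial\Omega_Q,\mu^o]$ across $\partial\Omega_Q$ the whole function $v_q[\partial\Omega_Q,\mu^i]$ is harmonic and bounded (hence, being $q$-periodic, it equals a constant on all of $\mathbb{R}^n$), and similarly for $\mu^o$; then the jump formula $\frac{\partial}{\partial\nu}v^{\mp}_q-\frac{\partial}{\partial\nu}v^{\pm}_q=\pm\mu$ on $\partial\Omega_Q$, applied with one side having zero normal derivative, yields $\mu^i=0$ and $\mu^o=0$. (The zero-integral constraint $\mu^i,\mu^o\in C^{0,\alpha}(\partial\Omega_Q)_0$ is what guarantees in the first place that these periodic simple layer potentials are well-defined $q$-periodic functions, so it is used implicitly throughout.) The main obstacle is keeping track of the additive constants: one must be sure that the normalization by the integral means makes the correspondence between kernel elements of $J_{\gamma^\#}$ and solutions of \eqref{eq:uniq} genuinely exact, and that the final passage from $v^\pm=0$ back to $\mu^i=\mu^o=0$ correctly uses both the continuity of the single layer potential and the jump relation for its normal derivative.
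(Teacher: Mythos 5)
Your overall route---the decomposition of $J_{\gamma^\#}$ into a principal part $A$ (which is a bounded linear bijection of $(C^{0,\alpha}(\partial\Omega_Q)_0)^2$ onto itself) plus a compact part $K$, the conclusion that $J_{\gamma^\#}$ is Fredholm of index zero, and the reduction of injectivity to Proposition~\ref{prop:uniq} by building $(v^+,v^-)$ from the (mean-normalized) periodic simple layer potentials---coincides with the paper's proof, including the identification of $A$ with $(\mu^i,\mu^o)\mapsto(\tfrac{\lambda^-}{2}\mu^o+\tfrac{\lambda^+}{2}\mu^i,\,-\tfrac{\lambda^+}{2}\mu^i)$ and the compactness argument via Lemma~\ref{lem1} and the compact imbedding of $C^{1,\alpha}$ into $C^{0,\alpha}$.

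There is, however, a genuine gap in the final passage from $v^+=0$, $v^-=0$ to $\mu^i=\mu^o=0$. From $v^+=0$ you correctly deduce that $v^+_q[\partial\Omega_Q,\mu^i]$ is constant in $\mathrm{cl}\mathbb{S}[\Omega_Q]$, and by continuity of the simple layer potential that $v^-_q[\partial\Omega_Q,\mu^i]$ takes that same constant value \emph{on} $\partial\Omega_Q$. But the next assertion, that ``the whole function $v_q[\partial\Omega_Q,\mu^i]$ is harmonic and bounded,'' does not follow: the potential is continuous across $\partial\Omega_Q$ but its normal derivative jumps by $\mu^i$, so in the sense of distributions $\Delta v_q[\partial\Omega_Q,\mu^i]$ carries a surface term supported on $\partial\mathbb{S}[\Omega_Q]$ with density $\mu^i$, and the function is harmonic across the interface \emph{if and only if} $\mu^i=0$---exactly what you are trying to prove. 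Removability of the interface needs $C^1$-matching, not mere continuity. The step must instead go through a uniqueness, not a removability, statement: $v^-_q[\partial\Omega_Q,\mu^i]$ is a $q$-periodic harmonic function in $\mathbb{S}[\Omega_Q]^-$ with constant Dirichlet datum on $\partial\Omega_Q$, so the maximum principle forces it to equal that constant throughout $\mathrm{cl}\mathbb{S}[\Omega_Q]^-$; only then do both normal derivatives of $v_q[\partial\Omega_Q,\mu^i]$ vanish and the jump relation yields $\mu^i=0$. The paper reaches this endpoint equivalently by writing $\frac{\partial v^-_q}{\partial\nu_{\Omega_Q}}=\frac{1}{2}\mu+w_{q,\ast}[\partial\Omega_Q,\mu]=0$ and invoking the injectivity of $\frac{1}{2}I+w_{q,\ast}$ on $C^{0,\alpha}(\partial\Omega_Q)_0$ (it cites \cite[Prop.~A.4(i)]{LaMu12}); this same injectivity applied directly to $\frac{\partial v^-_q[\partial\Omega_Q,\mu^o]}{\partial\nu_{\Omega_Q}}=0$ disposes of $\mu^o$ without any ``harmonic across the interface'' argument.
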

\begin{proof}
Let $\hat{J}_{\gamma^\#}\equiv(\hat{J}_{\gamma^\#,1},\hat{J}_{\gamma^\#,2})$ be the linear operator from $(C^{0,\alpha}(\partial\Omega_Q)_0)^2$ to $(C^{0,\alpha}(\partial\Omega_Q)_0)^2$ defined by 
\[
\begin{split}
&\hat{J}_{\gamma^\#,1}[\mu^i,\mu^o]\equiv (\lambda^-/2)\mu^o+(\lambda^+/2)\mu^i\,,\quad \hat{J}_{\gamma^\#,2}[\mu^i,\mu^o]\equiv -(\lambda^+/2)\mu^i
\end{split}
\]  for all $(\mu^i,\mu^o)\in(C^{0,\alpha}(\partial\Omega_Q)_0)^2$. 
Clearly, $\hat{J}_{\gamma^\#}$ is a linear homeomorphism from $(C^{0,\alpha}(\partial\Omega_Q)_0)^2$ to $(C^{0,\alpha}(\partial\Omega_Q)_0)^2$. Then let $\tilde{J}_{\gamma^\#}\equiv(\tilde{J}_{\gamma^\#,1},\tilde{J}_{\gamma^\#,2})$ be the operator from $(C^{0,\alpha}(\partial\Omega_Q)_0)^2$ to $(C^{0,\alpha}(\partial\Omega_Q)_0)^2$ defined by 
\[
\begin{split}
\tilde{J}_{\gamma^\#,1}[\mu^i,\mu^o]\equiv& \lambda^- w_{q,\ast}[\partial\Omega_Q,\mu^o]-\lambda^+ w_{q,\ast}[\partial\Omega_Q,\mu^i]\,,\\
\tilde{J}_{\gamma^\#,2}[\mu^i,\mu^o]\equiv &\lambda^+ w_{q,\ast}[\partial\Omega_Q,\mu^i]\\&+\gamma^\# \Bigl(v^+_q[\partial \Omega_Q,\mu^i]_{|\partial\Omega_Q}-\frac{1}{|\partial\Omega_Q|_{n-1}}\int_{\partial \Omega_Q}v^+_q[\partial \Omega_Q,\mu^i]\, d\sigma\\&-v^-_q[\partial \Omega_Q,\mu^o]_{|\partial\Omega_Q}+\frac{1}{|\partial\Omega_Q|_{n-1}}\int_{\partial \Omega_Q}v^-_q[\partial \Omega_Q,\mu^o]\, d\sigma\Bigr) 
\end{split}
\] for all $(\mu^i,\mu^o)\in(C^{0,\alpha}(\partial\Omega_Q)_0)^2$. 
Then, by Lemma~\ref{lem1},  by the boundedness of the operator from $C^{0,\alpha}(\partial\Omega_Q)_0$ to $C^{1,\alpha}(\partial\Omega_Q)_0$ which takes $\mu$ to 
\[
v_{q}[\partial\Omega_Q,\mu]_{|\partial \Omega_Q}-\frac{1}{|\partial\Omega_Q|_{n-1}}\int_{\partial \Omega_Q}v_{q}[\partial\Omega_Q,\mu]\, d\sigma\, ,
\] 
and by the compactness of the imbedding of $C^{1,\alpha}(\partial\Omega_Q)_0$ into $C^{0,\alpha}(\partial\Omega_Q)_0$, we have that $\tilde{J}_{\gamma^\#}$ is a compact operator. Now, since $J_{\gamma^\#}=\hat{J}_{\gamma^\#}+\tilde{J}_{\gamma^\#}$ and since compact perturbations of isomorphisms are Fredholm operators of index $0$, we deduce that $J_{\gamma^\#}$ is a Fredholm operator of index $0$. Thus to show that  $J_{\gamma^\#}$ is a linear homeomorphism it suffices to show that it is injective. So, let $(\mu^i,\mu^o)\in(C^{0,\alpha}(\partial\Omega_Q)_0)^2$ be such that $J_{\gamma^\#}[\mu^i,\mu^o]=(0,0)$. Then by the jump formulae for the normal derivative of the periodic simple layer potential one verifies that the pair $(v^+,v^-)\in C^{1,\alpha}_q(\mathrm{cl}\mathbb{S}[\Omega_Q])\times C^{1,\alpha}_q(\mathrm{cl}\mathbb{S}[\Omega_Q]^-)$ defined by
\[
\begin{split}
& v^+\equiv v^+_q[\partial \Omega_Q,\mu^i]-\frac{1}{|\partial\Omega_Q|_{n-1}}\int_{\partial \Omega_Q}v^+_q[\partial \Omega_Q,\mu^i]\, d\sigma\, ,\\
& v^-\equiv v^-_q[\partial \Omega_Q,\mu^o]-\frac{1}{|\partial\Omega_Q|_{n-1}}\int_{\partial \Omega_Q}v^-_q[\partial \Omega_Q,\mu^o]\, d\sigma\, ,
\end{split}
\] 
is a solution of the boundary value problem in \eqref{eq:uniq}. Accordingly, Proposition~\ref{prop:uniq} implies that $v^-=0$ and $v^+=0$. In particular,
\begin{align}
& v^+_q[\partial \Omega_Q,\mu^i]-\frac{1}{|\partial\Omega_Q|_{n-1}}\int_{\partial \Omega_Q}v^+_q[\partial \Omega_Q,\mu^i]\, d\sigma=0\qquad \text{in $\mathrm{cl}\mathbb{S}[\Omega_Q]$}\,,\label{120612eq1}\\
& v^-_q[\partial \Omega_Q,\mu^o]-\frac{1}{|\partial\Omega_Q|_{n-1}}\int_{\partial \Omega_Q}v^-_q[\partial \Omega_Q,\mu^o]\, d\sigma=0\qquad \text{in $\mathrm{cl}\mathbb{S}[\Omega_Q]^-$}\,.\nonumber
\end{align}
Then, by the jump formulae for the normal derivative of the periodic simple layer potential we have 
\[
\frac{\partial v^-}{\partial\nu_{\Omega_Q}}(x)=\frac{1}{2}\mu^o(x)+w_{q,\ast}[\partial\Omega_Q,\mu^o](x)=0\qquad\forall x\in\partial\Omega_Q\, ,
\] 
which implies that $\mu^o=0$ (cf.~\cite[Proposition A.4 (i)]{LaMu12}). Moreover, by \eqref{120612eq1} and by the continuity of the simple layer potential we deduce that
\[
v^-_q[\partial \Omega_Q,\mu^i]-\frac{1}{|\partial\Omega_Q|_{n-1}}\int_{\partial \Omega_Q}v^-_q[\partial \Omega_Q,\mu^i]\, d\sigma=0\qquad \text{in $\mathrm{cl}\mathbb{S}[\Omega_Q]^-$}\, ,
\]
and thus by arguing as above we conclude that $\mu^i=0$. Accordingly, $J_{\gamma^\#}$ is injective, and, as a consequence, a linear homeomorphism.
\qquad\end{proof}

By the jump formulae for the normal derivative of the periodic simple layer potential, we can now deduce the validity of the following theorem.

\begin{theorem}\label{thm:ex}
Let $\alpha \in ]0,1[$. Let $\Omega_{Q}$ be a bounded open subset of $\mathbb{R}^n$ of class $C^{1,\alpha}$ such that $\mathbb{R}^n \setminus \mathrm{cl}\Omega_Q$ is connected and that $\mathrm{cl}\Omega_Q \subseteq Q$. Let $\lambda^+, \lambda^-, \gamma^\# \in ]0,+\infty[$. Let $(\Phi,\Gamma,c) \in C^{0,\alpha}(\partial \Omega_Q)_0\times C^{0,\alpha}(\partial\Omega_Q)\times \mathbb{R}$. Let $J_{\gamma^\#}$ be as in Proposition \ref{prop:J}. Then a pair $(\mu^i,\mu^o) \in (C^{0,\alpha}(\partial \Omega_Q)_0)^2$ satisfies the equality 
\[
J_{\gamma^\#}[\mu^i,\mu^o]=\biggl(\Phi,\Gamma-\frac{\int_{\partial \Omega_Q}\Gamma\, d\sigma}{|\partial\Omega_Q|_{n-1}}\biggr)
\] 
if and only if the pair $(v^+,v^-) \in C^{1,\alpha}_q(\mathrm{cl}\mathbb{S}[\Omega_Q])\times C^{1,\alpha}_q(\mathrm{cl}\mathbb{S}[\Omega_Q]^-)$ defined by
\begin{align}
&v^+\equiv v^+_q[\partial \Omega_Q,\mu^i]-\frac{1}{|\partial\Omega_Q|_{n-1}}\int_{\partial \Omega_Q}v^+_q[\partial \Omega_Q,\mu^i]\, d\sigma+\frac{1}{|\partial\Omega_Q|_{n-1}}c\,, \nonumber\\
& v^-\equiv v^-_q[\partial \Omega_Q,\mu^o]-\frac{1}{|\partial\Omega_Q|_{n-1}}\int_{\partial \Omega_Q}v^-_q[\partial \Omega_Q,\mu^o]\, d\sigma\nonumber\\
&\qquad+\frac{1}{|\partial\Omega_Q|_{n-1}}c-\frac{1}{\gamma^\#}\frac{1}{|\partial\Omega_Q|_{n-1}}\int_{\partial \Omega_Q}\Gamma\,d\sigma\, ,\nonumber
\end{align}
is a solution of
\begin{equation}
\label{eq:ex2}
\left\{
\begin{array}{ll}
\Delta v^+=0 & {\mathrm{in}}\ {\mathbb{S}}[\Omega_Q]\,,\\
\Delta v^-=0 & {\mathrm{in}}\ {\mathbb{S}}[\Omega_Q]^{-}\,,
\\
v^+(x+q_{hh}e_h)=v^+(x)& \forall x \in \mathrm{cl}\mathbb{S}[\Omega_Q]\, ,\ \forall h \in\{1,\dots,n\}\,,\\
v^-(x+q_{hh}e_h)=v^-(x) & \forall x \in \mathrm{cl}\mathbb{S}[\Omega_Q]^{-}\, ,\ \forall h \in\{1,\dots,n\}\,,\\
\lambda^-\frac{\partial v^-}{\partial\nu_{ \Omega_Q }}(x)-\lambda^+\frac{\partial v^+}{\partial\nu_{ \Omega_Q}}(x)=\Phi(x)& \forall x\in  \partial\Omega_Q\,,\\
\lambda^+\frac{\partial v^+}{\partial\nu_{ \Omega_Q}}(x)+\gamma^\#\bigl(v^+(x)-v^-(x)\bigr)=\Gamma(x)& \forall x\in  \partial\Omega_Q\,,\\
\int_{\partial \Omega_Q}v^+(x)\, d\sigma_x=c\, .
\end{array}
\right.
\end{equation}
\end{theorem}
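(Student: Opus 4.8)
The plan is to verify the equivalence directly by a computation, using the jump formulae for the normal derivative of the periodic simple layer potential together with the continuity of $v_q[\partial\Omega_Q,\cdot]$ in $\mathbb{R}^n$. First I would check that the pair $(v^+,v^-)$ defined in the statement is harmonic and $q$-periodic: this is immediate, since periodic simple layer potentials of densities supported on $\partial\Omega_Q$ are $q$-periodic and harmonic off $\partial\Omega_Q$, so the first four lines of \eqref{eq:ex2} hold automatically, regardless of $(\mu^i,\mu^o)$. The additive constants ($c/|\partial\Omega_Q|_{n-1}$ and the $\Gamma$-average correction) do not affect harmonicity or periodicity, and they are chosen precisely so that the normalization conditions come out right.

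Next I would translate the two transmission conditions. Since the constant shifts have zero normal derivative, we have $\frac{\partial v^+}{\partial\nu_{\Omega_Q}} = -\frac12\mu^i + w_{q,\ast}[\partial\Omega_Q,\mu^i]$ and $\frac{\partial v^-}{\partial\nu_{\Omega_Q}} = \frac12\mu^o + w_{q,\ast}[\partial\Omega_Q,\mu^o]$ on $\partial\Omega_Q$, by the jump formulae recalled before Lemma~\ref{lem1}. Hence the fifth line of \eqref{eq:ex2} reads exactly $J_{\gamma^\#,1}[\mu^i,\mu^o]=\Phi$. For the sixth line, the key point is that the constant $c/|\partial\Omega_Q|_{n-1}$ cancels in the difference $v^+(x)-v^-(x)$ on $\partial\Omega_Q$, leaving
\[
v^+(x)-v^-(x) = \Bigl(v^+_q[\partial\Omega_Q,\mu^i](x)-\tfrac{1}{|\partial\Omega_Q|_{n-1}}\!\int_{\partial\Omega_Q}\!v^+_q[\partial\Omega_Q,\mu^i]\,d\sigma\Bigr) - \Bigl(v^-_q[\partial\Omega_Q,\mu^o](x)-\tfrac{1}{|\partial\Omega_Q|_{n-1}}\!\int_{\partial\Omega_Q}\!v^-_q[\partial\Omega_Q,\mu^o]\,d\sigma\Bigr) + \tfrac{1}{\gamma^\#}\tfrac{1}{|\partial\Omega_Q|_{n-1}}\!\int_{\partial\Omega_Q}\!\Gamma\,d\sigma,
\]
so that $\lambda^+\frac{\partial v^+}{\partial\nu_{\Omega_Q}}+\gamma^\#(v^+-v^-) = J_{\gamma^\#,2}[\mu^i,\mu^o] + \frac{1}{|\partial\Omega_Q|_{n-1}}\int_{\partial\Omega_Q}\Gamma\,d\sigma$ on $\partial\Omega_Q$; this equals $\Gamma$ if and only if $J_{\gamma^\#,2}[\mu^i,\mu^o]=\Gamma-\frac{\int_{\partial\Omega_Q}\Gamma\,d\sigma}{|\partial\Omega_Q|_{n-1}}$. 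Thus the two equations for $J_{\gamma^\#}$ are equivalent to the fifth and sixth lines of \eqref{eq:ex2}.

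Finally I would dispose of the normalization condition. Integrating $v^+$ over $\partial\Omega_Q$ and using that $\mu^i\in C^{0,\alpha}(\partial\Omega_Q)_0$, one sees the simple-layer term and its average correction cancel, so $\int_{\partial\Omega_Q}v^+\,d\sigma = \frac{|\partial\Omega_Q|_{n-1}}{|\partial\Omega_Q|_{n-1}}c = c$, giving the seventh line of \eqref{eq:ex2}. One should also note that, because $J_{\gamma^\#}$ maps into $(C^{0,\alpha}(\partial\Omega_Q)_0)^2$ and the right-hand side $\bigl(\Phi,\Gamma-\frac{\int_{\partial\Omega_Q}\Gamma\,d\sigma}{|\partial\Omega_Q|_{n-1}}\bigr)$ indeed lies in that space (the second component has zero integral by construction, and $\Phi\in C^{0,\alpha}(\partial\Omega_Q)_0$ by hypothesis), the statement is consistent. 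There is no real obstacle here: the proof is a bookkeeping exercise in matching the simple-layer representation to the boundary conditions, and the only mild subtlety is keeping careful track of the additive constants so that both the temperature-jump condition and the normalization $\int_{\partial\Omega_Q}v^+\,d\sigma=c$ come out exactly. (The genuine work — that $J_{\gamma^\#}$ is invertible, hence that \eqref{eq:ex2} is actually solvable — has already been done in Propositions~\ref{prop:uniq} and~\ref{prop:J}.)
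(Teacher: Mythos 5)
Your proof is correct and takes exactly the route the paper intends (the paper only sketches it as ``by the jump formulae for the normal derivative of the periodic simple layer potential''), namely checking that harmonicity, periodicity, and the normalization $\int_{\partial\Omega_Q}v^+\,d\sigma=c$ hold unconditionally, and that the fifth and sixth boundary conditions are line-by-line equivalent to $J_{\gamma^\#,1}[\mu^i,\mu^o]=\Phi$ and $J_{\gamma^\#,2}[\mu^i,\mu^o]=\Gamma-\frac{1}{|\partial\Omega_Q|_{n-1}}\int_{\partial\Omega_Q}\Gamma\,d\sigma$. One tiny slip: the cancellation giving $\int_{\partial\Omega_Q}v^+\,d\sigma=c$ does not use $\mu^i\in C^{0,\alpha}(\partial\Omega_Q)_0$ at all; it is automatic from the mean-correction term, so that hypothesis is irrelevant to that step (it is what guarantees the left-hand sides and right-hand sides live in $C^{0,\alpha}(\partial\Omega_Q)_0$, as you observe at the end).
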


{\em Remark}. Let $\alpha$, $\Omega_Q$, $\lambda^+$, $\lambda^-$, $\gamma^\#$ be as in Theorem \ref{thm:ex}. Then Propositions \ref{prop:uniq}, \ref{prop:J} and Theorem \ref{thm:ex} imply that for each triple  $(\Phi,\Gamma,c) \in C^{0,\alpha}(\partial \Omega_Q)_0\times C^{0,\alpha}(\partial\Omega_Q)\times \mathbb{R}$, the solution in $C^{1,\alpha}_q(\mathrm{cl}\mathbb{S}[\Omega_Q])\times C^{1,\alpha}_q(\mathrm{cl}\mathbb{S}[\Omega_Q]^-)$ of problem \eqref{eq:ex2} exists and is unique. 

We now turn to non-periodic problems and we prove some results which we use in the sequel  to analyse problem \eqref{bvpe} around the degenerate case $\epsilon=0$. We first have the following uniqueness result whose validity can be deduced by  a standard energy argument.

\begin{proposition}\label{prop:uniqbis}
Let $\alpha \in ]0,1[$. Let $\Omega$ be a bounded open connected subset of $\mathbb{R}^n$ of class $C^{1,\alpha}$ such that $\mathbb{R}^n \setminus \mathrm{cl}\Omega$ is connected. Let $\lambda^+, \lambda^- \in ]0,+\infty[$, $\tilde{\gamma} \in [0,+\infty[$. Let $(v^+,v^-) \in C^{1,\alpha}(\mathrm{cl}\Omega)\times C^{1,\alpha}_{\mathrm{loc}}(\mathbb{R}^n \setminus  \Omega)$ be such that
\begin{equation}
\label{eq:uniqbis}
\left\{
\begin{array}{ll}
\Delta v^+=0 & {\mathrm{in}}\ \Omega\,,\\
\Delta v^-=0 & {\mathrm{in}}\ \mathbb{R}^n \setminus \mathrm{cl}\Omega\,,
\\
\lambda^-\frac{\partial v^-}{\partial\nu_{ \Omega}}(x)-\lambda^+\frac{\partial v^+}{\partial\nu_{ \Omega}}(x)=0& \forall x\in  \partial\Omega\,,\\
\lambda^+\frac{\partial v^+}{\partial\nu_{ \Omega}}(x)+\tilde{\gamma}\Bigl(v^+(x)-v^-(x)\Bigr)=0& \forall x\in  \partial\Omega\,,\\
\int_{\partial \Omega}v^+(x)\, d\sigma_x=0\, ,\\
\int_{\partial \Omega}v^-(x)\, d\sigma_x=0\, ,\\
\lim_{x\to \infty}v^-(x) \in \mathbb{R}\, .
\end{array}
\right.
\end{equation}
Then $v^+=0$ on $\mathrm{cl}\Omega$ and $v^-=0$ on $\mathbb{R}^n \setminus  \Omega$.
\end{proposition}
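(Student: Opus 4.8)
The plan is to run the classical energy (first Green identity) argument, the only genuine work being the control of the behaviour of $v^-$ at infinity.

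First I would apply the first Green identity to the harmonic function $v^+$ on the bounded set $\Omega$, obtaining $\int_\Omega|\nabla v^+|^2\,dx=\int_{\partial\Omega}v^+\,\frac{\partial v^+}{\partial\nu_\Omega}\,d\sigma$. For the exterior part, I would fix $R>0$ so large that $\mathrm{cl}\Omega\subseteq\mathbb{B}_n(0,R)$ and apply the first Green identity to $v^-$ on $\mathbb{B}_n(0,R)\setminus\mathrm{cl}\Omega$, which gives
\[
\int_{\mathbb{B}_n(0,R)\setminus\mathrm{cl}\Omega}|\nabla v^-|^2\,dx=\int_{\partial\mathbb{B}_n(0,R)}v^-\,\frac{\partial v^-}{\partial\nu_{\mathbb{B}_n(0,R)}}\,d\sigma-\int_{\partial\Omega}v^-\,\frac{\partial v^-}{\partial\nu_\Omega}\,d\sigma\,.
\]
The crucial step is to let $R\to+\infty$ here. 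Since $v^-$ is harmonic outside a compact set and tends to a finite limit $\ell$ at infinity, the standard asymptotic analysis of harmonic functions at infinity (via the Kelvin transform) applies; moreover the net flux vanishes, since by the divergence theorem $\int_{\partial\mathbb{B}_n(0,R)}\frac{\partial v^-}{\partial\nu}\,d\sigma=\int_{\partial\Omega}\frac{\partial v^-}{\partial\nu_\Omega}\,d\sigma=\frac{\lambda^+}{\lambda^-}\int_{\partial\Omega}\frac{\partial v^+}{\partial\nu_\Omega}\,d\sigma=0$, using the third equation in \eqref{eq:uniqbis} and the harmonicity of $v^+$ in the bounded set $\Omega$. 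The vanishing of the net flux sharpens the asymptotics to $v^-(x)-\ell=O(|x|^{1-n})$ and $Dv^-(x)=O(|x|^{-n})$ as $|x|\to+\infty$ (with the case $n=2$ treated slightly differently from $n\ge 3$). Writing $v^-=\ell+(v^--\ell)$ on $\partial\mathbb{B}_n(0,R)$ and using again that the $\ell$-contribution integrates to $0$, one obtains $\int_{\partial\mathbb{B}_n(0,R)}v^-\,\frac{\partial v^-}{\partial\nu}\,d\sigma\to 0$; since the left-hand side above is monotone in $R$, this yields $\int_{\mathbb{R}^n\setminus\mathrm{cl}\Omega}|\nabla v^-|^2\,dx=-\int_{\partial\Omega}v^-\,\frac{\partial v^-}{\partial\nu_\Omega}\,d\sigma<+\infty$.

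Next I would multiply the identity for $v^+$ by $\lambda^+$ and the one for $v^-$ by $\lambda^-$ and add them, so that the right-hand side becomes $\int_{\partial\Omega}\bigl(\lambda^+v^+\,\frac{\partial v^+}{\partial\nu_\Omega}-\lambda^-v^-\,\frac{\partial v^-}{\partial\nu_\Omega}\bigr)\,d\sigma$. Setting $\psi\equiv\lambda^+\frac{\partial v^+}{\partial\nu_\Omega}=\lambda^-\frac{\partial v^-}{\partial\nu_\Omega}$ on $\partial\Omega$ (the flux-continuity condition, i.e.\ the third equation in \eqref{eq:uniqbis}), this equals $\int_{\partial\Omega}(v^+-v^-)\psi\,d\sigma$, and by the non-ideal contact condition (the fourth equation in \eqref{eq:uniqbis}) we have $\psi=-\tilde{\gamma}(v^+-v^-)$ on $\partial\Omega$, whence
\[
\lambda^+\int_\Omega|\nabla v^+|^2\,dx+\lambda^-\int_{\mathbb{R}^n\setminus\mathrm{cl}\Omega}|\nabla v^-|^2\,dx=-\tilde{\gamma}\int_{\partial\Omega}(v^+-v^-)^2\,d\sigma\le 0\,.
\]
Since $\lambda^+,\lambda^->0$ and $\tilde{\gamma}\ge 0$, both sides vanish; in particular $\nabla v^+=0$ on $\Omega$ and $\nabla v^-=0$ on $\mathbb{R}^n\setminus\mathrm{cl}\Omega$. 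As $\Omega$ and $\mathbb{R}^n\setminus\mathrm{cl}\Omega$ are connected, $v^+$ and $v^-$ are constant, and the conditions $\int_{\partial\Omega}v^+\,d\sigma=0$ and $\int_{\partial\Omega}v^-\,d\sigma=0$ force both constants to be $0$, which is the assertion.

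The main obstacle is the step carried out in the second paragraph: one has to justify rigorously that the surface integral over $\partial\mathbb{B}_n(0,R)$ tends to $0$, which rests both on the decay estimates for harmonic functions with a finite limit at infinity and on the observation that the net flux of $v^-$ across $\partial\Omega$ vanishes; once this is settled, everything else is a routine energy computation.
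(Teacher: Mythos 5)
Your proof is correct and follows exactly the route the paper has in mind: the authors state that Proposition \ref{prop:uniqbis} ``can be deduced by a standard energy argument'' and give no details, and your Green-identity computation combined with control of the boundary term at infinity (vanishing net flux and decay of $v^-$ towards its limit) is precisely that standard argument, carried out carefully including the $\tilde\gamma=0$ degeneration and the separate treatment of $n=2$.
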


We now  study an integral operator which we need in order to solve (non-periodic) transmission problems in terms of classical simple layer potentials.

\begin{proposition}\label{prop:K}
Let $\alpha \in ]0,1[$. Let $\Omega$ be a bounded open connected subset of $\mathbb{R}^n$ of class $C^{1,\alpha}$ such that $\mathbb{R}^n \setminus \mathrm{cl}\Omega$ is connected. Let $\lambda^+, \lambda^- \in ]0,+\infty[$, $\tilde{\gamma} \in [0,+\infty[$. Let $K_{\tilde{\gamma}}\equiv(K_{\tilde{\gamma},1},K_{\tilde{\gamma},2})$ be the operator from $(C^{0,\alpha}(\partial \Omega)_0)^2$ to $(C^{0,\alpha}(\partial\Omega)_0)^2$ defined by
\begin{align*}
K_{\tilde{\gamma},1}[\theta^i,\theta^o]\equiv&\lambda^-\Bigl(\frac{1}{2}\theta^o+w_{\ast}[\partial \Omega,\theta^o]\Bigr)-\lambda^+\Bigl(-\frac{1}{2}\theta^i+w_{\ast}[\partial \Omega,\theta^i]\Bigr)\, ,\nonumber\\
K_{\tilde{\gamma},2}[\theta^i,\theta^o]\equiv&\lambda^+\Bigl(-\frac{1}{2}\theta^i+w_{\ast}[\partial \Omega,\theta^i]\Bigr)+\tilde{\gamma} \biggl(v^+[\partial \Omega,\theta^i]_{|\partial \Omega}-\frac{1}{|\partial\Omega|_{n-1}}\int_{\partial \Omega}v^+[\partial \Omega,\theta^i]\, d\sigma\\&-v^-[\partial \Omega,\theta^o]_{|\partial \Omega}+\frac{1}{|\partial\Omega|_{n-1}}\int_{\partial \Omega}v^-[\partial \Omega,\theta^o]\, d\sigma\biggr)\, ,\nonumber
\end{align*}
for all $(\theta^i,\theta^o) \in (C^{0,\alpha}(\partial \Omega)_0)^2$, where $|\partial\Omega|_{n-1}$ denotes the $(n-1)$-dimensional measure of $\partial\Omega$. Then $K_{\tilde{\gamma}}$ is a linear homeomorphism.
\end{proposition}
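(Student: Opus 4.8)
The plan is to mimic, in the non-periodic setting, the proof of Proposition~\ref{prop:J}. First I would split $K_{\tilde\gamma}$ as $K_{\tilde\gamma}=\hat K_{\tilde\gamma}+\tilde K_{\tilde\gamma}$, where the ``principal part'' $\hat K_{\tilde\gamma}$ is the operator that takes $(\theta^i,\theta^o)$ to $\bigl((\lambda^-/2)\theta^o+(\lambda^+/2)\theta^i,\,-(\lambda^+/2)\theta^i\bigr)$ and $\tilde K_{\tilde\gamma}$ collects the remaining terms, namely the $w_\ast[\partial\Omega,\cdot]$ contributions and the $\tilde\gamma$-term built from the traces of the simple layer potentials (with their integral means subtracted). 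The operator $\hat K_{\tilde\gamma}$ is a linear homeomorphism of $(C^{0,\alpha}(\partial\Omega)_0)^2$ onto itself, since it is (block) triangular with invertible diagonal entries (multiplication by the nonzero constants $\lambda^-/2$ and $-\lambda^+/2$), and one checks it maps into the subspace of zero-mean functions because $\theta^i,\theta^o$ have zero mean. By Lemma~\ref{lem0}~(iii) the maps $\theta\mapsto w_\ast[\partial\Omega,\theta]$ are compact on $C^{0,\alpha}(\partial\Omega)_0$; moreover the map $\theta\mapsto v^\pm[\partial\Omega,\theta]_{|\partial\Omega}-\frac1{|\partial\Omega|_{n-1}}\int_{\partial\Omega}v^\pm[\partial\Omega,\theta]\,d\sigma$ is bounded from $C^{0,\alpha}(\partial\Omega)_0$ into $C^{1,\alpha}(\partial\Omega)_0$, and the imbedding $C^{1,\alpha}(\partial\Omega)_0\hookrightarrow C^{0,\alpha}(\partial\Omega)_0$ is compact; hence $\tilde K_{\tilde\gamma}$ is compact. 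Since a compact perturbation of an isomorphism between Banach spaces is Fredholm of index $0$, it suffices to prove that $K_{\tilde\gamma}$ is injective.

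For injectivity, suppose $K_{\tilde\gamma}[\theta^i,\theta^o]=(0,0)$. The idea is to build from $(\theta^i,\theta^o)$ a pair $(v^+,v^-)$ solving the homogeneous problem \eqref{eq:uniqbis} and then invoke Proposition~\ref{prop:uniqbis}. Concretely I would set
\[
v^+\equiv v^+[\partial\Omega,\theta^i]-\frac{1}{|\partial\Omega|_{n-1}}\int_{\partial\Omega}v^+[\partial\Omega,\theta^i]\,d\sigma,\qquad
v^-\equiv v^-[\partial\Omega,\theta^o]-\frac{1}{|\partial\Omega|_{n-1}}\int_{\partial\Omega}v^-[\partial\Omega,\theta^o]\,d\sigma .
\]
Then $v^+\in C^{1,\alpha}(\mathrm{cl}\Omega)$, $v^-\in C^{1,\alpha}_{\mathrm{loc}}(\mathbb{R}^n\setminus\Omega)$, both are harmonic in their respective domains, $v^-$ has a finite limit at infinity (here one uses $n\ge 2$ together with the zero-mean condition $\theta^o\in C^{0,\alpha}(\partial\Omega)_0$, which kills the logarithmic growth when $n=2$ and leaves the decaying part when $n>2$), and by construction $\int_{\partial\Omega}v^+\,d\sigma=\int_{\partial\Omega}v^-\,d\sigma=0$. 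The jump formulae for the normal derivative of the simple layer potential, $\frac{\partial}{\partial\nu_\Omega}v^\pm[\partial\Omega,\mu]=\mp\frac12\mu+w_\ast[\partial\Omega,\mu]$, together with $K_{\tilde\gamma,1}[\theta^i,\theta^o]=0$ give the flux-transmission condition $\lambda^-\frac{\partial v^-}{\partial\nu_\Omega}-\lambda^+\frac{\partial v^+}{\partial\nu_\Omega}=0$, while $K_{\tilde\gamma,2}[\theta^i,\theta^o]=0$, after noting that the integral-mean corrections are exactly those appearing in the definitions of $v^+,v^-$, yields $\lambda^+\frac{\partial v^+}{\partial\nu_\Omega}+\tilde\gamma(v^+-v^-)=0$ on $\partial\Omega$. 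Thus $(v^+,v^-)$ solves \eqref{eq:uniqbis}, and Proposition~\ref{prop:uniqbis} forces $v^+=0$ on $\mathrm{cl}\Omega$ and $v^-=0$ on $\mathbb{R}^n\setminus\Omega$.

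It remains to conclude $\theta^i=\theta^o=0$ from $v^+\equiv 0$, $v^-\equiv 0$. From $v^-\equiv 0$ in $\mathbb{R}^n\setminus\Omega$ we get $\frac{\partial v^-}{\partial\nu_\Omega}=\frac12\theta^o+w_\ast[\partial\Omega,\theta^o]=0$ on $\partial\Omega$; since the interior Neumann problem on the connected set $\mathbb{R}^n\setminus\mathrm{cl}\Omega$ (equivalently, invertibility of $\frac12 I+w_\ast$ on the relevant subspace, cf.\ the cited \cite[Prop.~A.4]{LaMu12} or its classical analogue) is solvable, this gives $\theta^o=0$. For $\theta^i$: from $v^+\equiv 0$ in $\mathrm{cl}\Omega$ and the continuity of the simple layer potential across $\partial\Omega$, the function $v^-[\partial\Omega,\theta^i]-\frac1{|\partial\Omega|_{n-1}}\int_{\partial\Omega}v^-[\partial\Omega,\theta^i]\,d\sigma$ vanishes on $\partial\Omega$ and is harmonic in $\mathbb{R}^n\setminus\mathrm{cl}\Omega$ with finite limit at infinity; since $\theta^i$ has zero mean this forces it to vanish identically outside $\Omega$ (as in the $\theta^o$ case, using connectedness of $\mathbb{R}^n\setminus\mathrm{cl}\Omega$ and $n\ge2$), and then the jump formula gives $\frac12\theta^i+w_\ast[\partial\Omega,\theta^i]=0$, whence $\theta^i=0$. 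Therefore $K_{\tilde\gamma}$ is injective, hence, being Fredholm of index $0$, a linear homeomorphism.

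The main obstacle, and the only place that is genuinely different from the periodic Proposition~\ref{prop:J}, is the behaviour at infinity: one must verify carefully that the zero-mean hypothesis on $\theta^i,\theta^o$ together with $n\ge2$ guarantees that $v^-$ and the auxiliary exterior function have finite limits at $\infty$ (so that Proposition~\ref{prop:uniqbis} applies) and, conversely, that a harmonic function vanishing on $\partial\Omega$ with finite limit at infinity, arising from a zero-mean density, must vanish identically in the exterior domain. For $n=2$ this is the usual point where the zero-integral condition is essential; for $n>2$ it follows from the decay $|x|^{2-n}$ of $S_n$. Everything else—the splitting, the Fredholm argument via Lemma~\ref{lem0}, and the use of the jump formulae—transcribes directly from the periodic case.
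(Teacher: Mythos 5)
Your proposal is correct and follows essentially the same route as the paper: split $K_{\tilde\gamma}$ into the invertible diagonal part plus a compact remainder (Lemma~\ref{lem0} playing the role Lemma~\ref{lem1} played in Proposition~\ref{prop:J}), deduce Fredholm index $0$, then prove injectivity by building $(v^+,v^-)$ from the densities, invoking Proposition~\ref{prop:uniqbis}, and falling back on classical potential theory to get $\theta^i=\theta^o=0$. The paper compresses the final step into a citation to Folland; you attempt to spell it out, which is fine, but one point deserves a sharper justification.

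In the $\theta^i$ step you pass to the exterior extension $v^-[\partial\Omega,\theta^i]-c$ (with $c$ the mean over $\partial\Omega$), note it vanishes on $\partial\Omega$ and has a finite limit at infinity, and assert this forces it to vanish ``since $\theta^i$ has zero mean''. For $n=2$ this is indeed the usual bounded-exterior uniqueness argument. For $n\ge 3$, however, ``finite limit at infinity'' alone is not a uniqueness class (the capacitary potential minus $1$ vanishes on the boundary, is bounded, and is nonzero), so you must show the limit at infinity is actually $0$. One line does it: by the jump formula and Lemma~\ref{lem0}~(ii), $\int_{\partial\Omega}\frac{\partial}{\partial\nu_\Omega}v^-[\partial\Omega,\theta^i]\,d\sigma=\frac12\int_{\partial\Omega}\theta^i\,d\sigma+\int_{\partial\Omega}w_\ast[\partial\Omega,\theta^i]\,d\sigma=0$, while $v^-[\partial\Omega,\theta^i]$, having constant trace $c$ on $\partial\Omega$ and decaying at infinity, must equal $c$ times the capacitary potential whose normal-derivative integral is nonzero; hence $c=0$. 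Alternatively, you can avoid the exterior entirely: $v^+[\partial\Omega,\theta^i]$ constant in $\Omega$ gives $-\frac12\theta^i+w_\ast[\partial\Omega,\theta^i]=0$, and the kernel of $-\frac12 I+w_\ast$ is spanned by a density of nonzero integral, so its intersection with $C^{0,\alpha}(\partial\Omega)_0$ is trivial; this is precisely the fact cited from Folland. Either fix closes the only gap; everything else transcribes cleanly from the periodic proof.
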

\begin{proof}
By arguing so as in the proof of Proposition \ref{prop:J} for $J_{\gamma^\#}$ and by replacing Lemma \ref{lem1} by Lemma \ref{lem0}, one can prove that $K_{\tilde{\gamma}}$ is a Fredholm operator of index $0$. Thus to show that  $K_{\tilde{\gamma}}$ is a linear homeomorphism it suffices to show that it is injective. So, let $(\theta^i,\theta^o)\in(C^{0,\alpha}(\partial\Omega)_0)^2$ be such that $K_{\tilde{\gamma}}[\theta^i,\theta^o]=(0,0)$. Then by the jump formulae for the normal derivative of the simple layer potential, the pair $(v^+,v^-)\in C^{1,\alpha}(\mathrm{cl}\Omega)\times C^{1,\alpha}_{\mathrm{loc}}(\mathbb{R}^n \setminus\Omega)$ defined by
\[
\begin{split}
&v^+\equiv v^+[\partial \Omega,\theta^i]-\frac{1}{|\partial\Omega|_{n-1}}\int_{\partial \Omega}v^+[\partial \Omega,\theta^i]\, d\sigma\, ,\\
& v^-\equiv v^-[\partial \Omega,\theta^o]-\frac{1}{|\partial\Omega|_{n-1}}\int_{\partial \Omega}v^-[\partial \Omega,\theta^o]\, d\sigma\, ,
\end{split}
\] 
is a solution of the boundary value problem in \eqref{eq:uniqbis}. Accordingly, Proposition~\ref{prop:uniqbis} implies that $v^-=0$ and $v^+=0$. Then, by classical potential theory, $\theta^i=0$ and $\theta^o=0$ (cf., \textit{e.g.}, Folland \cite[Chapter 3, \S D]{Fo95}).  Accordingly, $K_{\tilde{\gamma}}$ is injective, and, as a consequence, a linear homeomorphism.
\end{proof}

By Propositions \ref{prop:uniqbis}, \ref{prop:K}, and by the jump formulae for the normal derivative of the classical simple layer potential, we immediately deduce the validity of the following result concerning the solvability of a (non-periodic) transmission problem.

\begin{theorem}\label{thm:exbis}
Let $\alpha \in ]0,1[$. Let $\Omega$ be a bounded open connected subset of $\mathbb{R}^n$ of class $C^{1,\alpha}$ such that $\mathbb{R}^n \setminus \mathrm{cl}\Omega$ is connected. Let $\lambda^+, \lambda^- \in ]0,+\infty[$, $\tilde{\gamma} \in [0,+\infty[$. Let $K_{\tilde{\gamma}}$ be as in Proposition \ref{prop:K}. Let $(\Phi,\Gamma) \in (C^{0,\alpha}(\partial \Omega)_0)^2$. Let $(\theta^i,\theta^o) \in (C^{0,\alpha}(\partial \Omega)_0)^2$ be such that
\[
K_{\tilde{\gamma}}[\theta^i,\theta^o]=(\Phi,\Gamma)\,.
\] 
Let $(v^+,v^-)\in C^{1,\alpha}(\mathrm{cl}\Omega)\times C^{1,\alpha}_{\mathrm{loc}}(\mathbb{R}^n \setminus \Omega)$ be defined by
\begin{align}
&v^+\equiv v^+[\partial\Omega,\theta^i]-\frac{1}{|\partial\Omega|_{n-1}}\int_{\partial \Omega}v^+[\partial\Omega,\theta^i]\,d\sigma\,,\nonumber\\
& v^-\equiv v^-[\partial\Omega,\theta^o]-\frac{1}{|\partial\Omega|_{n-1}}\int_{\partial \Omega}v^-[\partial\Omega,\theta^o]\,d\sigma\, . \nonumber
\end{align}
Then $(v^+,v^-)$ is the unique solution in $C^{1,\alpha}(\mathrm{cl}\Omega)\times C^{1,\alpha}_{\mathrm{loc}}(\mathbb{R}^n \setminus \Omega)$ of 
\[
\left\{
\begin{array}{ll}
\Delta v^+=0 & {\mathrm{in}}\ \Omega\,,\\
\Delta v^-=0 & {\mathrm{in}}\ \mathbb{R}^n \setminus \mathrm{cl}\Omega\,,
\\
\lambda^-\frac{\partial v^-}{\partial\nu_{ \Omega}}(x)-\lambda^+\frac{\partial v^+}{\partial\nu_{ \Omega}}(x)=\Phi(x)& \forall x\in  \partial\Omega\,,\\
\lambda^+\frac{\partial v^+}{\partial\nu_{ \Omega}}(x)+\tilde{\gamma}\Bigl(v^+(x)-v^-(x)\Bigr)=\Gamma(x)& \forall x\in  \partial\Omega\,,\\
\int_{\partial \Omega}v^+(x)\, d\sigma_x=0\, ,\\
\int_{\partial \Omega}v^-(x)\, d\sigma_x=0\, ,\\
\lim_{x\to \infty}v^-(x) \in \mathbb{R}\, .
\end{array}
\right.
\]
\end{theorem}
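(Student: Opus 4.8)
The plan is to check by direct substitution that the pair $(v^+,v^-)$ associated with $(\theta^i,\theta^o)$ solves the stated transmission problem, and then to invoke Proposition~\ref{prop:uniqbis} to obtain uniqueness. (Recall that, by Proposition~\ref{prop:K}, a pair $(\theta^i,\theta^o)$ with $K_{\tilde\gamma}[\theta^i,\theta^o]=(\Phi,\Gamma)$ exists and is unique, so the hypothesis is non-vacuous.)

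First I would record that, by the properties of the classical simple layer potential recalled in \S\ref{plpotentials}, the restriction of $v^+[\partial\Omega,\theta^i]$ to $\mathrm{cl}\Omega$ belongs to $C^{1,\alpha}(\mathrm{cl}\Omega)$ and is harmonic in $\Omega$, while the restriction of $v^-[\partial\Omega,\theta^o]$ to $\mathbb{R}^n\setminus\Omega$ belongs to $C^{1,\alpha}_{\mathrm{loc}}(\mathbb{R}^n\setminus\Omega)$ and is harmonic in $\mathbb{R}^n\setminus\mathrm{cl}\Omega$. Since $v^+$ and $v^-$ differ from these functions only by additive constants, they lie in the required spaces, satisfy the two Laplace equations, and---by the very choice of the constants---have vanishing integral on $\partial\Omega$.

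Next I would use the jump formula $\frac{\partial}{\partial\nu_\Omega}v^\pm[\partial\Omega,\mu]=\mp\frac12\mu+w_{\ast}[\partial\Omega,\mu]$ on $\partial\Omega$. Additive constants do not affect normal derivatives, so on $\partial\Omega$ one has $\frac{\partial v^+}{\partial\nu_\Omega}=-\frac12\theta^i+w_{\ast}[\partial\Omega,\theta^i]$ and $\frac{\partial v^-}{\partial\nu_\Omega}=\frac12\theta^o+w_{\ast}[\partial\Omega,\theta^o]$; hence $\lambda^-\frac{\partial v^-}{\partial\nu_\Omega}-\lambda^+\frac{\partial v^+}{\partial\nu_\Omega}=K_{\tilde\gamma,1}[\theta^i,\theta^o]=\Phi$. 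For the non-ideal contact condition, the constants subtracted in the definitions of $v^+$ and $v^-$ combine so that $v^+-v^-$ restricted to $\partial\Omega$ reproduces exactly the combination of simple layer potentials and their boundary averages appearing in $K_{\tilde\gamma,2}$; therefore $\lambda^+\frac{\partial v^+}{\partial\nu_\Omega}+\tilde\gamma(v^+-v^-)=K_{\tilde\gamma,2}[\theta^i,\theta^o]=\Gamma$ on $\partial\Omega$. Finally, since $\theta^o$ has zero integral on $\partial\Omega$, one checks that $v^-[\partial\Omega,\theta^o](x)\to 0$ as $|x|\to\infty$: for $n>2$ this follows from the decay of the kernel $|x-y|^{2-n}$, and for $n=2$ from the fact that the leading term $\frac{1}{s_n}(\log|x|)\int_{\partial\Omega}\theta^o\,d\sigma$ vanishes while the remainder $\frac{1}{s_n}\int_{\partial\Omega}\log(|x-y|/|x|)\,\theta^o(y)\,d\sigma_y$ is infinitesimal. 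Consequently $\lim_{x\to\infty}v^-(x)=-\frac{1}{|\partial\Omega|_{n-1}}\int_{\partial\Omega}v^-[\partial\Omega,\theta^o]\,d\sigma\in\mathbb{R}$. This completes the verification that $(v^+,v^-)$ solves the problem.

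For uniqueness, I would take two solutions in $C^{1,\alpha}(\mathrm{cl}\Omega)\times C^{1,\alpha}_{\mathrm{loc}}(\mathbb{R}^n\setminus\Omega)$ and observe that their difference solves the homogeneous problem \eqref{eq:uniqbis} (with $\Phi=\Gamma=0$), hence vanishes identically by Proposition~\ref{prop:uniqbis}. The only mildly delicate points are the bookkeeping of the additive constants in the second boundary condition and the behaviour at infinity in the planar case $n=2$; both are routine once organised carefully.
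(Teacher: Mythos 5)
Your proposal is correct and takes essentially the same route the paper indicates (jump formulae for the normal derivative of the simple layer potential to verify the boundary conditions, plus Propositions~\ref{prop:uniqbis} and \ref{prop:K} for uniqueness and solvability); you simply spell out the verification that the paper leaves as ``immediate,'' including the welcome detail of the limit at infinity in the planar case $n=2$, where the vanishing of $\int_{\partial\Omega}\theta^o\,d\sigma$ kills the logarithmic growth.
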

\section{Formulation of  problem \eqref{bvpe} in terms of integral equations}\label{finteq}
In the following Proposition \ref{prop:finteq}, we formulate problem \eqref{bvpe} in terms of integral equations on $\partial \Omega$. To do so, we exploit Theorem \ref{thm:ex} and the rule of change of variables in integrals. Indeed, if $\epsilon \in ]0,\epsilon_0[$, by a simple computation one can convert problem \eqref{bvpe} into a periodic transmission problem (see problem \eqref{inteq3} below). Then, by Theorem \ref{thm:ex}, one can reformulate such a problem in terms of a system of integral equations defined on the $\epsilon$-dependent domain $\partial \Omega_{p,\epsilon}$. Finally, by exploiting an appropriate change of variable, one can get rid of such a dependence and can obtain an equivalent system of integral equations defined on the fixed domain $\partial \Omega$, as the following proposition shows.   We now find convenient to introduce the following notation. Let $\alpha\in]0,1[$. Let $\Omega$ be as in \eqref{dom}. Let $\epsilon_{0}$ be as in \eqref{e0}. If $\lambda^+, \lambda^- \in ]0,+\infty[$, $f \in C^{0,\alpha}(\partial \Omega)_0$, $g \in C^{0,\alpha}(\partial \Omega)$, then we denote by $M \equiv (M_1,M_2)$ the operator from $]-\epsilon_{0},\epsilon_{0}[\times \mathbb{R} \times (C^{0,\alpha}(\partial\Omega)_{0})^2$ to $(C^{0,\alpha}(\partial\Omega)_0)^2$ defined by
\begin{equation}\label{Lmbdeq1}
\begin{split}
M_1&[\epsilon,\epsilon',\theta^i,\theta^o](t)\\&\equiv\lambda^-\Bigl(\frac{1}{2}\theta^o(t)+w_{\ast}[\partial \Omega,\theta^o](t)+\epsilon^{n-1}\int_{\partial \Omega}DR_{q,n}(\epsilon(t-s))\nu_{\Omega}(t)\theta^o(s)\, d\sigma_s\Bigr)\\
&-\lambda^+\Bigl(-\frac{1}{2}\theta^i(t)+w_{\ast}[\partial \Omega,\theta^i](t)+\epsilon^{n-1}\int_{\partial \Omega}DR_{q,n}(\epsilon(t-s))\nu_{\Omega}(t)\theta^i(s)\, d\sigma_s\Bigr)\\
&- f(t)+(\lambda^--\lambda^+)(\nu_{\Omega}(t))_j\qquad\qquad\qquad\qquad\qquad\qquad \forall t \in \partial \Omega\, ,
\end{split}
\end{equation}
\begin{equation}\label{Lmbdeq2}
\begin{split}
M_2&[\epsilon,\epsilon',\theta^i,\theta^o](t)\\&\equiv\lambda^+\Bigl(-\frac{1}{2}\theta^i(t)+w_{\ast}[\partial \Omega,\theta^i](t)+\epsilon^{n-1}\int_{\partial \Omega}DR_{q,n}(\epsilon(t-s))\nu_{\Omega}(t)\theta^i(s)\, d\sigma_s\Bigr)\\
&+ \epsilon' \Biggl(v^+[\partial \Omega,\theta^i](t)+\epsilon^{n-2}\int_{\partial \Omega}R_{q,n}(\epsilon(t-s))\theta^i(s)\, d\sigma_s\\
&\quad -\frac{1}{|\partial\Omega|_{n-1}}\int_{\partial \Omega}\Bigl(v^+[\partial \Omega,\theta^i](s')+\epsilon^{n-2}\int_{\partial \Omega}R_{q,n}(\epsilon(s'-s))\theta^i(s)d\sigma_s\,\Bigr) d\sigma_{s'}\\
&\quad -v^-[\partial \Omega,\theta^o](t)-\epsilon^{n-2}\int_{\partial \Omega}R_{q,n}(\epsilon(t-s))\theta^o(s)\, d\sigma_s\\
&\quad +\frac{1}{|\partial\Omega|_{n-1}}\int_{\partial \Omega}\Bigl(v^-[\partial \Omega,\theta^o](s')+\epsilon^{n-2}\int_{\partial \Omega}R_{q,n}(\epsilon(s'-s))\theta^o(s)d\sigma_s\,\Bigr) d\sigma_{s'}\Biggr)\\
&-g(t)+\frac{1}{|\partial\Omega|_{n-1}}\int_{\partial \Omega}g\,d\sigma+\lambda^+(\nu_{\Omega}(t))_j \qquad \forall t \in \partial \Omega\, ,
\end{split}
\end{equation}
for all $(\epsilon,\epsilon', \theta^i,\theta^o)\in ]-\epsilon_{0},\epsilon_{0}[\times \mathbb{R}\times
(C^{0,\alpha}(\partial\Omega)_{0})^2$. Then we have the following proposition.

\begin{proposition}\label{prop:finteq}
Let $\alpha\in]0,1[$. Let $p\in Q$. Let $\Omega$ be as in \eqref{dom}. Let $\epsilon_{0}$ be as in \eqref{e0}.  Let $\lambda^+, \lambda^- \in ]0,+\infty[$. Let $f \in C^{0,\alpha}(\partial \Omega)_0$. Let $g \in C^{0,\alpha}(\partial \Omega)$. Let $\rho$ be a function from $]0,\epsilon_0[$ to $]0,+\infty[$. Let $\epsilon \in ]0,\epsilon_0[$. Let $j \in \{1,\dots,n\}$. Then the unique solution $(u^+_j[\epsilon],u^-_j[\epsilon])$ in $C^{1,\alpha}_{\mathrm{loc}}(\mathrm{cl}\mathbb{S}[\Omega_{p,\epsilon}])\times C^{1,\alpha}_{\mathrm{loc}}(\mathrm{cl}\mathbb{S}[\Omega_{p,\epsilon}]^{-})$ of problem \eqref{bvpe} is delivered by
\[
\begin{split}
&u^+_j[\epsilon](x)\equiv v_q^+[\partial \Omega_{p,\epsilon},\hat{\theta}_j^i[\epsilon]((\cdot-p)/\epsilon)](x)-\frac{\epsilon^{1-n}}{|\partial \Omega|_{n-1}}\int_{\partial \Omega_{p,\epsilon}}\!\!\!v_q^+[\partial \Omega_{p,\epsilon},\hat{\theta}_j^i[\epsilon]((\cdot-p)/\epsilon)]\, d\sigma\\
&\qquad\qquad +x_j-\frac{\epsilon^{1-n}}{|\partial \Omega|_{n-1}}\int_{\partial \Omega_{p,\epsilon}}y_j\,d\sigma_y \qquad \forall x \in \mathrm{cl}\mathbb{S}[\Omega_{p,\epsilon}]\,,\\
& u^-_j[\epsilon](x)\equiv v_q^-[\partial \Omega_{p,\epsilon},\hat{\theta}_j^o[\epsilon]((\cdot-p)/\epsilon)](x)-\frac{\epsilon^{1-n}}{|\partial \Omega|_{n-1}}\int_{\partial \Omega_{p,\epsilon}}\!\!\!v_q^-[\partial \Omega_{p,\epsilon},\hat{\theta}_j^o[\epsilon]((\cdot-p)/\epsilon)]\, d\sigma\\
& -\rho(\epsilon)\frac{\epsilon^{1-n}}{|\partial \Omega|_{n-1}}\int_{\partial \Omega_{p,\epsilon}}g((y-p)/\epsilon)\, d\sigma_y +x_j-\frac{\epsilon^{1-n}}{|\partial \Omega|_{n-1}}\int_{\partial \Omega_{p,\epsilon}}\!\!\!y_j\,d\sigma_y \quad \forall x \in \mathrm{cl}\mathbb{S}[\Omega_{p,\epsilon}]^{-}\,,
\end{split}
\]
where $(\hat{\theta}_j^i[\epsilon],\hat{\theta}_j^o[\epsilon])$ denotes the unique solution $(\theta^i,\theta^o)$ in $(C^{0,\alpha}(\partial \Omega)_0)^2$ of
\begin{equation}
\label{lambdab}
M\Bigl[\epsilon,\frac{\epsilon}{\rho(\epsilon)},\theta^i,\theta^o\Bigr]=0\,.
\end{equation}
\end{proposition}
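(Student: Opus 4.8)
The plan is to reduce the periodic transmission problem \eqref{bvpe} posed on the $\epsilon$-dependent domain to the fixed-domain integral system \eqref{lambdab}, in three stages. First I would rewrite \eqref{bvpe} as an equivalent periodic transmission problem of the type treated in Theorem \ref{thm:ex}. The third and fourth conditions in \eqref{bvpe} say that $u^\pm_j(x) - x_j$ is $q$-periodic, so setting $v^+ \equiv u^+_j - x_j$ on $\mathrm{cl}\mathbb{S}[\Omega_{p,\epsilon}]$ and $v^- \equiv u^-_j - x_j$ on $\mathrm{cl}\mathbb{S}[\Omega_{p,\epsilon}]^-$ produces a pair in $C^{1,\alpha}_q(\mathrm{cl}\mathbb{S}[\Omega_{p,\epsilon}])\times C^{1,\alpha}_q(\mathrm{cl}\mathbb{S}[\Omega_{p,\epsilon}]^-)$. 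Since $\Delta x_j = 0$ and $\frac{\partial x_j}{\partial\nu_{\Omega_{p,\epsilon}}} = (\nu_{\Omega_{p,\epsilon}})_j$, the flux-jump condition becomes $\lambda^-\frac{\partial v^-}{\partial\nu}-\lambda^+\frac{\partial v^+}{\partial\nu} = f((x-p)/\epsilon) + (\lambda^- - \lambda^+)(\nu_{\Omega_{p,\epsilon}})_j$, and the temperature-jump condition (dividing through to match the form in Theorem \ref{thm:ex} with $\gamma^\# = 1/\rho(\epsilon)$, equivalently multiplying by $\rho(\epsilon)$ is avoided by keeping $\gamma^\# = 1/\rho(\epsilon)$) becomes $\lambda^+\frac{\partial v^+}{\partial\nu} + \frac{1}{\rho(\epsilon)}(v^+ - v^-) = g((x-p)/\epsilon) + \lambda^+(\nu_{\Omega_{p,\epsilon}})_j$. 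The normalization $\int_{\partial\Omega_{p,\epsilon}} u^+_j\, d\sigma = 0$ turns into $\int_{\partial\Omega_{p,\epsilon}} v^+\, d\sigma = -\int_{\partial\Omega_{p,\epsilon}} y_j\, d\sigma_y$, which fixes the constant $c$ in problem \eqref{eq:ex2}. One checks that the right-hand side $\Phi$ of the fifth equation has zero integral on $\partial\Omega_{p,\epsilon}$: indeed $f$ has zero integral by hypothesis, and $\int_{\partial\Omega_{p,\epsilon}}(\nu_{\Omega_{p,\epsilon}})_j\, d\sigma = \int_{\Omega_{p,\epsilon}} \partial_{x_j} 1\, dx = 0$ by the divergence theorem; this is exactly the hypothesis needed to invoke Theorem \ref{thm:ex} and the Remark following it, which give existence and uniqueness of $(v^+,v^-)$, hence of $(u^+_j[\epsilon],u^-_j[\epsilon])$.

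Second, I would apply Theorem \ref{thm:ex} with $\Omega_Q = \Omega_{p,\epsilon}$, $\gamma^\# = 1/\rho(\epsilon)$, $\Phi(x) = f((x-p)/\epsilon) + (\lambda^--\lambda^+)(\nu_{\Omega_{p,\epsilon}}(x))_j$, $\Gamma(x) = g((x-p)/\epsilon) + \lambda^+(\nu_{\Omega_{p,\epsilon}}(x))_j$, and $c = -\int_{\partial\Omega_{p,\epsilon}} y_j\, d\sigma_y$. This writes $v^\pm$ in terms of periodic simple layer potentials $v_q^\pm[\partial\Omega_{p,\epsilon},\mu^{i/o}]$ with densities $(\mu^i,\mu^o)\in(C^{0,\alpha}(\partial\Omega_{p,\epsilon})_0)^2$ solving $J_{1/\rho(\epsilon)}[\mu^i,\mu^o] = (\Phi, \Gamma - \frac{1}{|\partial\Omega_{p,\epsilon}|_{n-1}}\int_{\partial\Omega_{p,\epsilon}}\Gamma\, d\sigma)$ on $\partial\Omega_{p,\epsilon}$. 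Undoing the substitution $v^\pm = u^\pm_j - x_j$ and bookkeeping the various constants (the $c/|\partial\Omega_{p,\epsilon}|_{n-1}$ term, the subtracted means of the simple layer potentials, and the extra $-\frac{1}{\gamma^\#}\frac{1}{|\partial\Omega_{p,\epsilon}|_{n-1}}\int\Gamma\, d\sigma = -\rho(\epsilon)\frac{1}{|\partial\Omega_{p,\epsilon}|_{n-1}}\int\Gamma\,d\sigma$ appearing in $v^-$) should reproduce the stated formulas for $u^\pm_j[\epsilon]$, once one also notes that the $\lambda^+(\nu_{\Omega_{p,\epsilon}})_j$ part of $\Gamma$ integrates to zero on $\partial\Omega_{p,\epsilon}$, leaving only the $g$-contribution $\rho(\epsilon)\frac{1}{|\partial\Omega_{p,\epsilon}|_{n-1}}\int_{\partial\Omega_{p,\epsilon}} g((y-p)/\epsilon)\, d\sigma_y$ in the displayed formula for $u^-_j[\epsilon]$.

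Third, and this is the computational heart of the argument, I would rescale everything to $\partial\Omega$ via $x = p + \epsilon t$. Writing $\theta^{i}(t) = \epsilon^{?}\mu^i(p+\epsilon t)$ with the appropriate power of $\epsilon$ and similarly for $\theta^o$ (the normalization is chosen so that $\theta^{i},\theta^o \in C^{0,\alpha}(\partial\Omega)_0$), and using the change-of-variables identities for the simple layer potential and its normal derivative under dilation — namely $S_n(\epsilon(t-s)) = \epsilon^{2-n}S_n(t-s) + (\text{const}\cdot\log\epsilon\ \text{if}\ n=2)$ and the splitting $S_{q,n} = S_n + R_{q,n}$ with $R_{q,n}$ real analytic near $0$ — one computes that $w_{q,\ast}[\partial\Omega_{p,\epsilon},\mu](p+\epsilon t)$ decomposes as $w_\ast[\partial\Omega,\theta](t)$ plus a term $\epsilon^{n-1}\int_{\partial\Omega}DR_{q,n}(\epsilon(t-s))\nu_\Omega(t)\theta(s)\,d\sigma_s$, and $v_q^\pm[\partial\Omega_{p,\epsilon},\mu](p+\epsilon t)$ decomposes as $v^\pm[\partial\Omega,\theta](t)$ plus $\epsilon^{n-2}\int_{\partial\Omega}R_{q,n}(\epsilon(t-s))\theta(s)\,d\sigma_s$ plus constants that cancel against the subtracted means (the purely $\epsilon$-dependent additive constants coming from the rescaling of $S_n$, including the $\log\epsilon$ in dimension $2$, drop out precisely because $v^\pm$ appear only through their differences with their own averages). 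The constant terms $(\lambda^--\lambda^+)(\nu_\Omega)_j$ in $M_1$ and $\lambda^+(\nu_\Omega)_j$, $-g + \frac{1}{|\partial\Omega|_{n-1}}\int g\, d\sigma$ in $M_2$ come from the rescaled $\Phi$ and $\Gamma$, and the factor $\epsilon' = \epsilon/\rho(\epsilon)$ multiplying the $\Gamma$-block in $M_2$ arises because $\gamma^\# = 1/\rho(\epsilon)$ combines with the extra $\epsilon$ produced by $|\partial\Omega_{p,\epsilon}|_{n-1} = \epsilon^{n-1}|\partial\Omega|_{n-1}$ in the mean over $\partial\Omega_{p,\epsilon}$ versus the $\epsilon^{n-2}$ scaling of the potential, the net being one power of $\epsilon$. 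Collecting terms identifies the rescaled system $J_{1/\rho(\epsilon)}[\mu^i,\mu^o] = (\dots)$ with the equation $M[\epsilon, \epsilon/\rho(\epsilon), \theta^i,\theta^o] = 0$, and the homeomorphism property of $J_{1/\rho(\epsilon)}$ from Proposition \ref{prop:J} transfers (via the rescaling, which is a linear homeomorphism of the function spaces) to the unique solvability of \eqref{lambdab}, so that $(\hat\theta^i_j[\epsilon],\hat\theta^o_j[\epsilon])$ is well defined and yields $(u^+_j[\epsilon],u^-_j[\epsilon])$ through the stated formulas.

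The main obstacle is the bookkeeping in the third step: tracking exactly which powers of $\epsilon$ attach to each integral operator under the dilation $x = p+\epsilon t$, verifying that all the $\epsilon$-dependent \emph{additive constants} generated by rescaling $S_n$ (especially the $\log\epsilon$ term in dimension $n=2$) cancel against the averaging operations built into the formulas of Theorem \ref{thm:ex}, and checking that the choice of scaling exponent for $\theta^i,\theta^o$ makes the constant terms $(\nu_\Omega)_j$ and the $g$-terms appear with precisely the coefficients written in \eqref{Lmbdeq1}--\eqref{Lmbdeq2}. None of these steps is deep — each is the rule of change of variables in integrals plus the splitting $S_{q,n} = S_n + R_{q,n}$ — but getting every constant and every power of $\epsilon$ to land in the right place is where care is required.
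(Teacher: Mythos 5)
Your plan mirrors the paper's proof: pass to the $q$-periodic pair $v^\pm \equiv u^\pm_j - x_j$, apply Theorem \ref{thm:ex} with $\gamma^\# = 1/\rho(\epsilon)$ to the resulting periodic transmission problem (the paper's auxiliary problem \eqref{inteq3}), and rescale via $x = p+\epsilon t$ with the splitting $S_{q,n}=S_n+R_{q,n}$ so that the system on $\partial\Omega_{p,\epsilon}$ becomes \eqref{lambdab} on $\partial\Omega$. Your verification that $\Phi$ has zero integral, your identification of the constant $c=-\int_{\partial\Omega_{p,\epsilon}}y_j\,d\sigma_y$, and your explanation of how $\epsilon'=\epsilon/\rho(\epsilon)$ is produced by the one net power of $\epsilon$ attached to the rescaled simple layer potential are all sound.

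The one genuine defect is that the signs on the $(\nu_{\Omega_{p,\epsilon}})_j$ terms in both rewritten boundary data are wrong. Since $\partial x_j/\partial\nu_{\Omega_{p,\epsilon}}=(\nu_{\Omega_{p,\epsilon}})_j$, substituting $u^\pm_j=v^\pm+x_j$ into the fifth and sixth lines of \eqref{bvpe} and moving the $(\nu_{\Omega_{p,\epsilon}})_j$ contributions across the equals sign yields $\Phi(x)=f((x-p)/\epsilon)+(\lambda^+-\lambda^-)(\nu_{\Omega_{p,\epsilon}}(x))_j$ and $\Gamma(x)=g((x-p)/\epsilon)-\lambda^+(\nu_{\Omega_{p,\epsilon}}(x))_j$, not $(\lambda^--\lambda^+)$ and $+\lambda^+$ as you wrote; compare \eqref{inteq3}. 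Carried through, your $\Phi,\Gamma$ would produce a different operator from the $M$ in \eqref{Lmbdeq1}--\eqref{Lmbdeq2}, so the conclusion would not match the proposition. This also resolves the internal inconsistency in your third paragraph: since the paper's $M$ encodes $M=0$ as the boundary condition, $\Phi$ and $\Gamma$ enter $M_1$, $M_2$ with an overall minus sign, and only the corrected $\Phi$, $\Gamma$ give the constants $+(\lambda^--\lambda^+)(\nu_\Omega)_j$ and $+\lambda^+(\nu_\Omega)_j$ appearing there. Once the two signs are fixed, your bookkeeping plan reproduces the paper's argument.
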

\begin{proof}
We first note that the unique solution $(u^+_j[\epsilon],u^-_j[\epsilon])$ in $C^{1,\alpha}_{\mathrm{loc}}(\mathrm{cl}\mathbb{S}[\Omega_{p,\epsilon}])\times C^{1,\alpha}_{\mathrm{loc}}(\mathrm{cl}\mathbb{S}[\Omega_{p,\epsilon}]^{-})$ of problem \eqref{bvpe} is delivered by
\begin{align}
&u^+_j[\epsilon](x)\equiv v^+_j[\epsilon](x)+x_j\qquad \forall x \in \mathrm{cl}\mathbb{S}[\Omega_{p,\epsilon}]\, ,\nonumber\\
& u^-_j[\epsilon](x)\equiv v^-_j[\epsilon](x)+x_j\qquad \forall x \in \mathrm{cl}\mathbb{S}[\Omega_{p,\epsilon}]^-\,,  \nonumber
\end{align}
where $(v^+_j[\epsilon],v^-_j[\epsilon])$ is the unique solution in $C^{1,\alpha}_{q}(\mathrm{cl}\mathbb{S}[\Omega_{p,\epsilon}])\times C^{1,\alpha}_{q}(\mathrm{cl}\mathbb{S}[\Omega_{p,\epsilon}]^{-})$ of the following periodic problem
\begin{equation}
\label{inteq3}
\left\{
\begin{array}{ll}
\Delta v^+=0 & {\mathrm{in}}\ {\mathbb{S}}[\Omega_{p,\epsilon}]\,,\\
\Delta v^-=0 & {\mathrm{in}}\ {\mathbb{S}}[\Omega_{p,\epsilon}]^{-}\,,
\\
v^+(x+q_{hh}e_h)=v^+(x)& \forall x \in \mathrm{cl}\mathbb{S}[\Omega_{p,\epsilon}]\, ,\ \forall h \in\{1,\dots,n\}\,,\\
v^-(x+q_{hh}e_h)=v^-(x) & \forall x \in \mathrm{cl}\mathbb{S}[\Omega_{p,\epsilon}]^{-}\, ,\ \forall h \in\{1,\dots,n\}\,,\\
\lambda^-\frac{\partial v^-}{\partial\nu_{ \Omega_{p,\epsilon} }}(x)-\lambda^+\frac{\partial v^+}{\partial\nu_{ \Omega_{p,\epsilon}}}(x)\\
 =f((x-p)/\epsilon)+(\lambda^+-\lambda^-)(\nu_{\Omega_{p,\epsilon}}(x))_j& \forall x\in  \partial\Omega_{p,\epsilon}\,,\\
\lambda^+\frac{\partial v^+}{\partial\nu_{\Omega_{p,\epsilon}}}(x)+\frac{1}{\rho(\epsilon)}\bigl(v^+(x)-v^-(x)\bigr)\\
 =g((x-p)/\epsilon)-\lambda^+(\nu_{\Omega_{p,\epsilon}}(x))_j& \forall x\in  \partial\Omega_{p,\epsilon}\,,\\
\int_{\partial \Omega_{p,\epsilon}}v^+(x)\, d\sigma_x=-\int_{\partial \Omega_{p,\epsilon}}y_j\,d\sigma_y\, .
\end{array}
\right.
\end{equation}
Then by Proposition \ref{prop:J}, by Theorem \ref{thm:ex}, and by a simple computation based on the rule of change of variables in integrals, we deduce the validity of the proposition.
\qquad\end{proof}

By Proposition \ref{prop:finteq}, we are reduced to analyse the system of integral equations \eqref{lambdab}. We note that equation \eqref{lambdab} does not make sense for $\epsilon=0$, but we can consider the equation
\begin{equation}
\label{lambdac}
M[\epsilon,\epsilon',\theta^i,\theta^o]=0\,,
\end{equation}
around $(\epsilon, \epsilon')=(0,r_{\ast})$, and equation \eqref{lambdac} makes perfectly sense if 
\begin{equation}
\label{lambdad}
\epsilon=0\,,\quad\epsilon'=r_{\ast} \, .
\end{equation}
 By Proposition  \ref{prop:finteq}, we already know that if $\epsilon\in ]0,\epsilon_{0}[$ and  $\epsilon'=\epsilon/\rho(\epsilon)$, then equation \eqref{lambdac} is equivalent to problem \eqref{bvpe}. We now consider equation \eqref{lambdac} under condition \eqref{lambdad}  by means of the following.

\begin{theorem}
\label{thm:lim}
Let  $\alpha\in]0,1[$. Let $p\in Q$. Let $\Omega$ be as in \eqref{dom}. Let $\epsilon_{0}$ be as in \eqref{e0}.  Let $\lambda^+, \lambda^- \in ]0,+\infty[$. Let $f \in C^{0,\alpha}(\partial \Omega)_0$. Let $g \in C^{0,\alpha}(\partial \Omega)$. Let $\rho$ be a function from $]0,\epsilon_0[$ to $]0,+\infty[$. Let $j \in \{1,\dots,n\}$. Let assumption \eqref{varrhorast} hold. Let  $r_\ast$ be as in \eqref{gast}. Let $M$ be as in \eqref{Lmbdeq1}-\eqref{Lmbdeq2}. Then there exists a unique pair $(\theta^i,\theta^o) \in (C^{0,\alpha}(\partial \Omega)_0)^2$ such that
\begin{equation}\label{eq:lim0}
M[0,r_{\ast},\theta^i,\theta^o]=0\, ,
\end{equation}
and we denote such a pair by $(\tilde{\theta}^i_j,\tilde{\theta}^o_j)$. Moreover, the pair of functions $(\tilde{u}^+_j,\tilde{u}^-_j) \in C^{1,\alpha}(\mathrm{cl}\Omega)\times C^{1,\alpha}_{\mathrm{loc}}( \mathbb{R}^n \setminus  \Omega)$, defined by
\begin{align}
&\tilde{u}^+_j\equiv v^+[\partial\Omega,\tilde{\theta}^i_j]-\frac{1}{|\partial\Omega|_{n-1}}\int_{\partial \Omega}v^+[\partial\Omega,\tilde{\theta}^i_j]\,d\sigma\,,\label{eq:lim1}\\
& \tilde{u}^-_j\equiv v^-[\partial\Omega,\tilde{\theta}^o_j]-\frac{1}{|\partial\Omega|_{n-1}}\int_{\partial \Omega}v^-[\partial\Omega,\tilde{\theta}^o_j]\,d\sigma\, , \label{eq:lim2}
\end{align}
is the unique solution of the following  \emph{`limiting boundary value problem'}
\begin{equation}\label{eq:lim3}
\left\{
\begin{array}{ll}
\Delta u^+=0 & {\mathrm{in}}\ \Omega\,,\\
\Delta u^-=0 & {\mathrm{in}}\ \mathbb{R}^n \setminus \mathrm{cl}\Omega\,,
\\
\lambda^-\frac{\partial u^-}{\partial\nu_{ \Omega}}(x)-\lambda^+\frac{\partial u^+}{\partial\nu_{ \Omega}}(x)=f(x)+(\lambda^+-\lambda^-)(\nu_\Omega(x))_j& \forall x\in  \partial\Omega\,,\\
\lambda^+\frac{\partial u^+}{\partial\nu_{ \Omega}}(x)+r_{\ast}\Bigl(u^+(x)-u^-(x)\Bigr)=g(x)-\frac{\int_{\partial \Omega}g\, d\sigma}{|\partial\Omega|_{n-1}}-\lambda^+(\nu_{\Omega}(x))_j& \forall x\in  \partial\Omega\,,\\
\int_{\partial \Omega}u^+(x)\, d\sigma_x=0\, ,\\
\int_{\partial \Omega}u^-(x)\, d\sigma_x=0\, ,\\
\lim_{x\to \infty}u^-(x)\in\mathbb{R}\, .
\end{array}
\right.
\end{equation}
\end{theorem}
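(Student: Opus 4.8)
The plan is to analyze the equation $M[0,r_\ast,\theta^i,\theta^o]=0$ by recognizing that, when $\epsilon=0$, the periodic simple layer kernel $S_{q,n}$ degenerates to the classical one: indeed, setting $\epsilon=0$ in the integral terms of \eqref{Lmbdeq1}-\eqref{Lmbdeq2} kills the $\epsilon^{n-1}\int DR_{q,n}$ and $\epsilon^{n-2}\int R_{q,n}$ contributions (here we need $n\geq 2$, which holds by assumption, so that $\epsilon^{n-1}\to 0$ and $\epsilon^{n-2}\to$ either $1$ when $n=2$ or $0$ when $n\geq 3$; in the former case the constant $R_{q,n}(0)$ term is annihilated by the subtraction of its average). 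Thus $M[0,r_\ast,\cdot,\cdot]$ reduces to the non-periodic operator $K_{r_\ast}$ of Proposition \ref{prop:K}, up to the explicit data terms. Precisely, I would show that $M[0,r_\ast,\theta^i,\theta^o]=0$ is equivalent to
\[
K_{r_\ast}[\theta^i,\theta^o]=\Bigl(f+(\lambda^+-\lambda^-)(\nu_\Omega)_j,\ g-\tfrac{\int_{\partial\Omega}g\,d\sigma}{|\partial\Omega|_{n-1}}-\lambda^+(\nu_\Omega)_j\Bigr)\,,
\]
noting that the right-hand side lies in $(C^{0,\alpha}(\partial\Omega)_0)^2$: the first component has zero integral because $f\in C^{0,\alpha}(\partial\Omega)_0$ and $\int_{\partial\Omega}(\nu_\Omega)_j\,d\sigma=0$ by the divergence theorem, and the second component has zero integral by the explicit subtraction of the average together with $\int_{\partial\Omega}(\nu_\Omega)_j\,d\sigma=0$.

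Granting this identification, existence and uniqueness of $(\tilde\theta^i_j,\tilde\theta^o_j)$ solving \eqref{eq:lim0} follow immediately from Proposition \ref{prop:K}, which asserts that $K_{r_\ast}$ is a linear homeomorphism of $(C^{0,\alpha}(\partial\Omega)_0)^2$ onto itself (here $\tilde\gamma=r_\ast\in[0,+\infty[$, which is permitted since $r_\ast$ is a nonnegative real by \eqref{varrhorast}-\eqref{gast}). Then, defining $(\tilde u^+_j,\tilde u^-_j)$ by \eqref{eq:lim1}-\eqref{eq:lim2}, I would invoke Theorem \ref{thm:exbis} with $(\Phi,\Gamma)$ equal to the above right-hand side: this yields directly that $(\tilde u^+_j,\tilde u^-_j)$ is the unique solution in $C^{1,\alpha}(\mathrm{cl}\Omega)\times C^{1,\alpha}_{\mathrm{loc}}(\mathbb{R}^n\setminus\Omega)$ of the transmission problem with data $\Phi$ in the flux-jump condition and $\Gamma$ in the Robin-type condition, the two integral normalizations, and the decay-at-infinity condition — that is, exactly the limiting boundary value problem \eqref{eq:lim3}.

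The main obstacle is the careful bookkeeping in the reduction $M[0,r_\ast,\cdot,\cdot]\leftrightarrow K_{r_\ast}$: one must check that the remainder terms involving $R_{q,n}$ genuinely vanish at $\epsilon=0$ (distinguishing $n=2$ from $n\geq 3$, and using that $R_{q,n}$ is real analytic near $0$ so that $R_{q,n}(\epsilon(t-s))\to R_{q,n}(0)$ uniformly and this constant is wiped out by the mean-zero projection), and that the operator $w_\ast[\partial\Omega,\cdot]$ and the simple-layer traces $v^\pm[\partial\Omega,\cdot]$ appearing in $M[0,r_\ast,\cdot,\cdot]$ match those in $K_{r_\ast}$. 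Once this algebraic matching is in place, everything else is a direct citation of Propositions \ref{prop:uniqbis}, \ref{prop:K} and Theorem \ref{thm:exbis}. A secondary point to verify is that the normalization $\int_{\partial\Omega}\tilde u^-_j\,d\sigma=0$ and $\lim_{x\to\infty}\tilde u^-_j(x)\in\mathbb{R}$ hold for the function defined in \eqref{eq:lim2}; the former is built in by subtracting the average of $v^-[\partial\Omega,\tilde\theta^o_j]$, and the latter follows from the standard decay $v^-[\partial\Omega,\mu](x)\to 0$ (for $n\geq 3$) or $v^-[\partial\Omega,\mu](x)=\frac{\log|x|}{s_n}\int_{\partial\Omega}\mu\,d\sigma + O(1)\to $ finite limit when $\int_{\partial\Omega}\mu\,d\sigma=0$ (for $n=2$), which is exactly why we work in the zero-mean subspace $C^{0,\alpha}(\partial\Omega)_0$.
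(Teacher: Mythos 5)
Your proposal is correct and follows essentially the same route as the paper: recognizing that $M[0,r_\ast,\cdot,\cdot]$ reduces to $K_{r_\ast}$ plus the explicit data (the periodic remainder terms involving $R_{q,n}$ vanishing at $\epsilon=0$), invoking Proposition \ref{prop:K} for existence and uniqueness of $(\tilde\theta^i_j,\tilde\theta^o_j)$, and then concluding via Theorem \ref{thm:exbis}. Your added care over the $n=2$ borderline (where $\epsilon^{n-2}=1$, and the constant $R_{q,n}(0)$ contribution is killed either by the mean-zero projection or equally by $\int_{\partial\Omega}\theta^{i}\,d\sigma=\int_{\partial\Omega}\theta^{o}\,d\sigma=0$) and your verification of the normalization and decay conditions are details the paper leaves implicit, but the underlying argument is the same.
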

\begin{proof}
We first note that equation \eqref{eq:lim0} can be rewritten as
\[
K_{r_{\ast}}[\theta^i,\theta^o]=\left(f+(\lambda^+-\lambda^-)(\nu_\Omega)_j \,,\ g-\frac{1}{|\partial\Omega|_{n-1}}\int_{\partial \Omega}g\, d\sigma-\lambda^+(\nu_{\Omega})_j\right)
\]
(see Proposition \ref{prop:K}). Then, by Proposition \ref{prop:K}, there exists a unique pair $(\theta^i,\theta^o) \in (C^{0,\alpha}(\partial \Omega)_0)^2$ such that \eqref{eq:lim0} holds. Then by Theorem \ref{thm:exbis} and by classical potential theory, the pair of functions delivered by \eqref{eq:lim1}-\eqref{eq:lim2} is the unique solution of problem \eqref{eq:lim3}.\qquad\end{proof}

Let $\tilde{\theta}^o_j$, $\tilde{u}^-_j$ be as in Theorem \ref{thm:lim}. Then by classical potential theory and by equality $\int_{\partial \Omega}\tilde{\theta}^o_j\, d\sigma=0$, we observe that we have
\begin{equation}\label{rem:lim1}
\tilde{l}^-_j \equiv \lim_{x\to\infty}\tilde{u}^-_j(x)=-\frac{1}{|\partial\Omega|_{n-1}}\int_{\partial \Omega}v^-[\partial\Omega,\tilde{\theta}^o_j]\,d\sigma\,. 
\end{equation}

We now turn to analyse equation \eqref{lambdac} for $(\epsilon,\epsilon')$ in a neighbourhood of $(0,r_\ast)$ by means of the following.
\begin{theorem}\label{thm:Lmbd}
Let $\alpha\in]0,1[$. Let $p\in Q$. Let $\Omega$ be as in \eqref{dom}. Let $\epsilon_{0}$ be as in \eqref{e0}.  Let $\lambda^+, \lambda^- \in ]0,+\infty[$. Let $f \in C^{0,\alpha}(\partial \Omega)_0$. Let $g \in C^{0,\alpha}(\partial \Omega)$. Let $\rho$ be a function from $]0,\epsilon_0[$ to $]0,+\infty[$. Let $j \in \{1,\dots,n\}$. Let assumption \eqref{varrhorast} hold. Let $r_\ast$ be as in \eqref{gast}. Let $M$ be as in \eqref{Lmbdeq1}-\eqref{Lmbdeq2}. Let $(\hat{\theta}^i_j[\cdot],\hat{\theta}^o_j[\cdot])$ be as in Proposition \ref{prop:finteq}. Let $(\tilde{\theta}^i_j,\tilde{\theta}^o_j)$ be as in Theorem \ref{thm:lim}. Then there exist  $\epsilon_1\in]0,\epsilon_{0}]$,   an open neighbourhood ${\mathcal{U}}_{r_{\ast}}$ of  $r_{\ast}$ in ${\mathbb{R}}$, an open neighbourhood ${\mathcal{V}}$ of $(\tilde{\theta}^i_j,\tilde{\theta}^o_j)$ in $(C^{0,\alpha}(\partial\Omega)_{0})^2$, and a real analytic operator $(\Theta^i_j,\Theta^o_j)$ from 
$]-\epsilon_1,\epsilon_1[ \times {\mathcal{U}}_{r_{\ast}}$ to ${\mathcal{V}}$ such that $\epsilon/\rho(\epsilon)\in  \mathcal{U}_{r_{\ast}}$ for all $\epsilon\in]0,\epsilon_1[$, and such that the set of zeros of $M$ in $]-\epsilon_1,\epsilon_1[ \times {\mathcal{U}}_{r_{\ast}}\times {\mathcal{V}}$ coincides with the graph of $(\Theta^i_j,\Theta^o_j)$. In particular, 
\[
\Bigl(\Theta^i_j\Bigl[\epsilon, \frac{\epsilon}{\rho(\epsilon)}\Bigr],\Theta^o_j\Bigl[\epsilon, \frac{\epsilon}{\rho(\epsilon)}\Bigr]\Bigl)=\Bigl(\hat{\theta}^i_j[\epsilon],\hat{\theta}^o_j[\epsilon]\Bigr)\  \forall \epsilon \in ]0,\epsilon_1[\,,
\ 
(\Theta^i_j[0 ,r_{\ast}],\Theta^o_j[0, r_{\ast}])=(\tilde{\theta}^i_j,\tilde{\theta}^o_j)\,.
\]
\end{theorem}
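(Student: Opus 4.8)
The plan is to apply the analytic implicit function theorem to the map $M$ at the point $(0,r_\ast,\tilde\theta^i_j,\tilde\theta^o_j)$. First I would check that $M$ is a real analytic operator on $]-\epsilon_0,\epsilon_0[\times\mathbb{R}\times(C^{0,\alpha}(\partial\Omega)_0)^2$: the terms $w_\ast[\partial\Omega,\cdot]$, $v^\pm[\partial\Omega,\cdot]$ and the identity are linear and continuous, hence real analytic; the integral operators with kernels $DR_{q,n}(\epsilon(t-s))$ and $R_{q,n}(\epsilon(t-s))$ depend real analytically on $\epsilon$ because $R_{q,n}$ is real analytic on $(\mathbb{R}^n\setminus q\mathbb{Z}^n)\cup\{0\}$ and $\epsilon(t-s)$ stays in a neighbourhood of $0$ as $(t,s)$ ranges over $\partial\Omega\times\partial\Omega$ and $\epsilon$ over a bounded interval (here one uses standard results on analyticity of integral operators with real analytic kernels, e.g.\ the references to Lanza de Cristoforis cited in the paper); and the products with $\epsilon'$, $\epsilon^{n-1}$, $\epsilon^{n-2}$ are polynomial, hence real analytic. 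So $M$ is real analytic in all its variables jointly.

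Next I would identify the partial differential of $M$ with respect to $(\theta^i,\theta^o)$ evaluated at $(0,r_\ast,\tilde\theta^i_j,\tilde\theta^o_j)$. Since $M$ is affine in $(\theta^i,\theta^o)$ for fixed $(\epsilon,\epsilon')$ and since at $\epsilon=0$ all the $R_{q,n}$-integral terms vanish, this differential is exactly the linear operator $(\theta^i,\theta^o)\mapsto K_{r_\ast}[\theta^i,\theta^o]$ appearing in Proposition~\ref{prop:K} (the constant terms $-f+(\lambda^--\lambda^+)(\nu_\Omega)_j$ and $-g+\dots$ drop out under differentiation). By Proposition~\ref{prop:K}, $K_{r_\ast}$ is a linear homeomorphism of $(C^{0,\alpha}(\partial\Omega)_0)^2$ onto itself. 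Together with $M[0,r_\ast,\tilde\theta^i_j,\tilde\theta^o_j]=0$ from Theorem~\ref{thm:lim}, the analytic implicit function theorem (cf.\ Deimling) yields $\epsilon_1\in]0,\epsilon_0]$, open neighbourhoods $\mathcal{U}_{r_\ast}$ of $r_\ast$ in $\mathbb{R}$ and $\mathcal{V}$ of $(\tilde\theta^i_j,\tilde\theta^o_j)$ in $(C^{0,\alpha}(\partial\Omega)_0)^2$, and a real analytic map $(\Theta^i_j,\Theta^o_j)$ on $]-\epsilon_1,\epsilon_1[\times\mathcal{U}_{r_\ast}$ with values in $\mathcal{V}$ whose graph is precisely the zero set of $M$ in $]-\epsilon_1,\epsilon_1[\times\mathcal{U}_{r_\ast}\times\mathcal{V}$.

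It remains to record the two identities in the conclusion. The equality $(\Theta^i_j[0,r_\ast],\Theta^o_j[0,r_\ast])=(\tilde\theta^i_j,\tilde\theta^o_j)$ is immediate from the implicit function theorem and Theorem~\ref{thm:lim}. For the other, shrinking $\epsilon_1$ if necessary so that $\epsilon/\rho(\epsilon)\in\mathcal{U}_{r_\ast}$ for all $\epsilon\in]0,\epsilon_1[$---which is possible because $\epsilon/\rho(\epsilon)\to r_\ast$ as $\epsilon\to0^+$ by \eqref{varrhorast}-\eqref{gast}---and, if needed, shrinking $\mathcal{V}$ and $\epsilon_1$ further so that $(\hat\theta^i_j[\epsilon],\hat\theta^o_j[\epsilon])\in\mathcal{V}$ for $\epsilon\in]0,\epsilon_1[$ (using continuity of $\epsilon\mapsto(\hat\theta^i_j[\epsilon],\hat\theta^o_j[\epsilon])$ near $0$ and its limit $(\tilde\theta^i_j,\tilde\theta^o_j)$, which itself follows from the implicit function theorem applied along the path $\epsilon\mapsto(\epsilon,\epsilon/\rho(\epsilon))$), the point $(\epsilon,\epsilon/\rho(\epsilon),\hat\theta^i_j[\epsilon],\hat\theta^o_j[\epsilon])$ is a zero of $M$ in the relevant neighbourhood (Proposition~\ref{prop:finteq} and equation~\eqref{lambdab}), hence lies on the graph, giving $(\Theta^i_j[\epsilon,\epsilon/\rho(\epsilon)],\Theta^o_j[\epsilon,\epsilon/\rho(\epsilon)])=(\hat\theta^i_j[\epsilon],\hat\theta^o_j[\epsilon])$.

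The main obstacle is the first step: verifying the joint real analyticity of $M$, specifically that the maps $\epsilon\mapsto\bigl(t\mapsto\int_{\partial\Omega}DR_{q,n}(\epsilon(t-s))\nu_\Omega(t)\theta(s)\,d\sigma_s\bigr)$ and the analogue with $R_{q,n}$ are real analytic from $]-\epsilon_0,\epsilon_0[$ (or a slightly smaller interval) into the space of bounded linear operators on $C^{0,\alpha}(\partial\Omega)$. This rests on the real analyticity of $R_{q,n}=S_{q,n}-S_n$ on $(\mathbb{R}^n\setminus q\mathbb{Z}^n)\cup\{0\}$ established in \S\ref{plpotentials} together with known results on the analytic dependence of integral operators with real analytic kernels upon a parameter entering the kernel; once this is granted, the rest is a routine application of the analytic implicit function theorem and Proposition~\ref{prop:K}.
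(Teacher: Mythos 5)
Your proof follows the same route as the paper's: verify real analyticity of $M$, observe $M[0,r_\ast,\tilde\theta^i_j,\tilde\theta^o_j]=0$, identify the partial differential $\partial_{(\theta^i,\theta^o)}M$ at that point with $K_{r_\ast}$, invoke Proposition~\ref{prop:K}, and apply the analytic implicit function theorem. One remark for precision: you say ``at $\epsilon=0$ all the $R_{q,n}$-integral terms vanish,'' which is true, but for $n=2$ the factor $\epsilon^{n-2}$ equals $1$, so those terms do not vanish because of the power of $\epsilon$; they vanish because $R_{q,n}(0)\int_{\partial\Omega}\theta\,d\sigma=0$ by the zero-average constraint $\theta\in C^{0,\alpha}(\partial\Omega)_0$. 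Also, the final identification $(\Theta^i_j[\epsilon,\epsilon/\rho(\epsilon)],\Theta^o_j[\epsilon,\epsilon/\rho(\epsilon)])=(\hat\theta^i_j[\epsilon],\hat\theta^o_j[\epsilon])$ does not require the shrinking-and-continuity argument you sketch: Proposition~\ref{prop:finteq} (via Proposition~\ref{prop:J} and Theorem~\ref{thm:ex}) already says $(\hat\theta^i_j[\epsilon],\hat\theta^o_j[\epsilon])$ is the \emph{unique} zero of $M[\epsilon,\epsilon/\rho(\epsilon),\cdot,\cdot]$ in all of $(C^{0,\alpha}(\partial\Omega)_0)^2$, and the implicit function theorem produces a zero of $M$ in $\mathcal{V}$ for each admissible $(\epsilon,\epsilon')$, so the two must coincide once $\epsilon_1$ is small enough that $\epsilon/\rho(\epsilon)\in\mathcal{U}_{r_\ast}$.
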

 \begin{proof} We plan to apply the Implicit Function Theorem to equation \eqref{lambdac} around the point $(0,  r_{\ast},\tilde{\theta}^i_j,\tilde{\theta}^o_j)$. By standard properties of integral  operators with real analytic kernels and with no singularity, and by classical mapping properties of layer potentials  (cf.~\cite[\S 4]{LaMu11a},  Miranda~\cite{Mi65}, Lanza de Cristoforis and Rossi \cite[Thm.~3.1]{LaRo04}), we conclude that $M$ is real analytic. By definition of $(\tilde{\theta}^i_j,\tilde{\theta}^o_j)$, we have $M[0, r_{\ast},\tilde{\theta}^i_j,\tilde{\theta}^o_j]=0$. By standard calculus in Banach spaces, the differential of $M$ at the point $(0,r_{\ast},\tilde{\theta}^i_j,\tilde{\theta}^o_j)$ with respect to the variables $(\theta^i,\theta^o)$ is delivered by the formula
\[
\partial_{(\theta^i,\theta^o)}M[0,r_{\ast},\tilde{\theta}^i_j,\tilde{\theta}^o_j]
(\overline{\theta}^i,\overline{\theta}^o)
=K_{r_{\ast}}[\overline{\theta}^i,\overline{\theta}^o]\qquad\qquad\forall (\overline{\theta}^i,\overline{\theta}^o)\in 
(C^{0,\alpha}(\partial\Omega)_{0})^2
\]
 (see Proposition \ref{prop:K}). 
 Then by Proposition \ref{prop:K}, $\partial_{(\theta^i,\theta^o)}M[0, r_{\ast},\tilde{\theta}^i_j,\tilde{\theta}^o_j]$ is a linear homeomorphism from $(C^{0,\alpha}(\partial\Omega)_{0})^2$ onto $(C^{0,\alpha}(\partial\Omega)_{0})^2$. Hence the existence of $\epsilon_1$, ${\mathcal{U}}_{r_{\ast}}$, ${\mathcal{V}}$, $\Theta^i_j$, $\Theta^o_j$ as in the statement follows by the Implicit Function Theorem for real analytic maps in Banach spaces (cf., \textit{e.g.},  Deimling \cite[Theorem 15.3]{De85}). \qquad\end{proof}

\section{A functional analytic representation theorem for the solutions of problem \eqref{bvpe}}
\label{fure}

In the following Theorem \ref{thm:rep+} we investigate the behaviour of  $u^+_j[\epsilon]$ for $\epsilon$ small and positive.

\begin{theorem}\label{thm:rep+}
Let the assumptions of Theorem \ref{thm:Lmbd} hold. 
Then there exists a real analytic map $U^{+}_j$ from $]-\epsilon_1,\epsilon_1[\times{\mathcal{U}}_{r_{\ast}}$ to $C^{1,\alpha}({\mathrm{cl}}\Omega)$ such that
\[
u^+_j[\epsilon](p+\epsilon t)=\epsilon
U^{+}_j\Bigl[\epsilon, \frac{\epsilon}{\rho(\epsilon)}\Bigr](t)\qquad\forall t\in  {\mathrm{cl}}\Omega\,,
\]
for all $\epsilon\in]0,\epsilon_1[$, where $(u^+_j[\epsilon], u^-_j[\epsilon])$ is the unique solution of problem \eqref{bvpe}. Moreover, 
\begin{equation}
\label{eq:rep2}
U^{+}_j[0, r_{\ast}](t)=\tilde{u}^+_j(t)+t_j-\frac{1}{|\partial\Omega|_{n-1}}\int_{\partial\Omega}s_j\, d\sigma_s\qquad\forall t\in {\mathrm{cl}}\Omega\,, 
\end{equation} where $\tilde{u}_j^+$ is defined as in Theorem \ref{thm:lim}.
\end{theorem}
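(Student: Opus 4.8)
The plan is to represent $u^+_j[\epsilon]$ by means of the integral representation of Proposition \ref{prop:finteq}, then to rescale everything to the fixed domain $\Omega$ and to recognize that the resulting expression is a composition of real analytic maps in $(\epsilon,\epsilon/\rho(\epsilon))$. First I would recall from Proposition \ref{prop:finteq} that for $\epsilon\in]0,\epsilon_1[$,
\[
u^+_j[\epsilon](x)= v_q^+[\partial \Omega_{p,\epsilon},\hat{\theta}_j^i[\epsilon]((\cdot-p)/\epsilon)](x)-\frac{\epsilon^{1-n}}{|\partial \Omega|_{n-1}}\int_{\partial \Omega_{p,\epsilon}}\!\!\!v_q^+[\partial \Omega_{p,\epsilon},\hat{\theta}_j^i[\epsilon]((\cdot-p)/\epsilon)]\, d\sigma + x_j -\frac{\epsilon^{1-n}}{|\partial\Omega|_{n-1}}\int_{\partial\Omega_{p,\epsilon}}y_j\,d\sigma_y\,,
\]
and then substitute $x=p+\epsilon t$ with $t\in\mathrm{cl}\Omega$. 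Using $S_{q,n}=S_n+R_{q,n}$, the rule of change of variables in integrals, and the homogeneity of $S_n$ (namely $S_n(\epsilon z)=\epsilon^{2-n}S_n(z)$ for $n>2$ and $S_2(\epsilon z)=S_2(z)+\frac{1}{s_2}\log\epsilon$; the logarithmic term cancels because $\hat\theta^i_j[\epsilon]$ has zero integral on $\partial\Omega$), one checks that $v_q^+[\partial\Omega_{p,\epsilon},\hat\theta^i_j[\epsilon]((\cdot-p)/\epsilon)](p+\epsilon t)$ equals $\epsilon$ times an expression that depends real analytically on $\epsilon$ and on $\hat\theta^i_j[\epsilon]$. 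The term $x_j=p_j+\epsilon t_j$ contributes the non-decaying part; the $p_j$ is absorbed by the normalizing integral $\epsilon^{1-n}|\partial\Omega|_{n-1}^{-1}\int_{\partial\Omega_{p,\epsilon}}y_j\,d\sigma_y=p_j+\epsilon|\partial\Omega|_{n-1}^{-1}\int_{\partial\Omega}s_j\,d\sigma_s$, leaving precisely $\epsilon\bigl(t_j-|\partial\Omega|_{n-1}^{-1}\int_{\partial\Omega}s_j\,d\sigma_s\bigr)$.

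Next I would define, for $(\epsilon,\epsilon')\in]-\epsilon_1,\epsilon_1[\times\mathcal U_{r_\ast}$, the candidate map
\[
U^+_j[\epsilon,\epsilon'](t)\equiv v^+[\partial\Omega,\Theta^i_j[\epsilon,\epsilon']](t)+\epsilon^{n-2}\!\!\int_{\partial\Omega}\!\!R_{q,n}(\epsilon(t-s))\Theta^i_j[\epsilon,\epsilon'](s)\,d\sigma_s -\frac{1}{|\partial\Omega|_{n-1}}\!\!\int_{\partial\Omega}\!\!\Bigl(v^+[\partial\Omega,\Theta^i_j[\epsilon,\epsilon']](s')+\epsilon^{n-2}\!\!\int_{\partial\Omega}\!\!R_{q,n}(\epsilon(s'-s))\Theta^i_j[\epsilon,\epsilon'](s)\,d\sigma_s\Bigr)d\sigma_{s'}+t_j-\frac{1}{|\partial\Omega|_{n-1}}\!\!\int_{\partial\Omega}\!\!s_j\,d\sigma_s\,,
\]
where $(\Theta^i_j,\Theta^o_j)$ is the real analytic operator furnished by Theorem \ref{thm:Lmbd}. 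That $U^+_j$ is real analytic from $]-\epsilon_1,\epsilon_1[\times\mathcal U_{r_\ast}$ to $C^{1,\alpha}(\mathrm{cl}\Omega)$ follows because: $\Theta^i_j$ is real analytic (Theorem \ref{thm:Lmbd}); $\mu\mapsto v^+[\partial\Omega,\mu]_{|\mathrm{cl}\Omega}$ is linear and continuous from $C^{0,\alpha}(\partial\Omega)$ to $C^{1,\alpha}(\mathrm{cl}\Omega)$ by classical potential theory; the map $(\epsilon,\mu)\mapsto\bigl(t\mapsto \int_{\partial\Omega}R_{q,n}(\epsilon(t-s))\mu(s)\,d\sigma_s\bigr)$ is real analytic from $]-\epsilon_1,\epsilon_1[\times C^{0,\alpha}(\partial\Omega)$ to $C^{1,\alpha}(\mathrm{cl}\Omega)$ because $R_{q,n}$ is real analytic on $(\mathbb{R}^n\setminus q\mathbb{Z}^n)\cup\{0\}$ (a neighbourhood of the compact set $\epsilon(\mathrm{cl}\Omega-\partial\Omega)$ for $\epsilon$ small), and by standard results on integral operators with real analytic kernels and no singularity (cf.~\cite[\S 4]{LaMu11a}); and the averaging over $\partial\Omega$ is linear and continuous. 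Then the computation in the first paragraph shows $u^+_j[\epsilon](p+\epsilon t)=\epsilon\,U^+_j[\epsilon,\epsilon/\rho(\epsilon)](t)$ for $\epsilon\in]0,\epsilon_1[$, using the identity $\Theta^i_j[\epsilon,\epsilon/\rho(\epsilon)]=\hat\theta^i_j[\epsilon]$ from Theorem \ref{thm:Lmbd}.

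Finally, the value at $(0,r_\ast)$ is obtained by setting $\epsilon=0$: the terms carrying the factor $\epsilon^{n-2}$ vanish (note $n\ge 2$, and for $n=2$ the factor $\epsilon^{0}=1$ but then $R_{q,2}(0)$ appears multiplied by $\int_{\partial\Omega}\Theta^i_j[0,r_\ast]\,d\sigma=0$, so this term also drops — this case must be treated with a little care), and by Theorem \ref{thm:Lmbd} we have $\Theta^i_j[0,r_\ast]=\tilde\theta^i_j$, so
\[
U^+_j[0,r_\ast](t)=v^+[\partial\Omega,\tilde\theta^i_j](t)-\frac{1}{|\partial\Omega|_{n-1}}\int_{\partial\Omega}v^+[\partial\Omega,\tilde\theta^i_j]\,d\sigma+t_j-\frac{1}{|\partial\Omega|_{n-1}}\int_{\partial\Omega}s_j\,d\sigma_s=\tilde u^+_j(t)+t_j-\frac{1}{|\partial\Omega|_{n-1}}\int_{\partial\Omega}s_j\,d\sigma_s
\]
by the definition \eqref{eq:lim1} of $\tilde u^+_j$, which is \eqref{eq:rep2}. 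I expect the main obstacle to be the bookkeeping in the first paragraph: carefully tracking the powers of $\epsilon$ produced by the change of variables and by the homogeneity of $S_n$ (with the two cases $n=2$ and $n>2$), and verifying that the logarithmic anomaly for $n=2$ is killed by the zero-integral condition on $\hat\theta^i_j[\epsilon]$, so that what is left is exactly $\epsilon$ times a real analytic map — together with the (standard but slightly technical) verification that integral operators with the real analytic, non-singular kernel $R_{q,n}(\epsilon(\cdot-\cdot))$ depend real analytically on $\epsilon$ jointly with the density, as values in $C^{1,\alpha}(\mathrm{cl}\Omega)$.
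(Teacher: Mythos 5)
Your proposal is correct and follows essentially the same route as the paper: representing $u^+_j[\epsilon]$ via Proposition \ref{prop:finteq}, rescaling with the decomposition $S_{q,n}=S_n+R_{q,n}$ and the homogeneity of $S_n$, defining the candidate $U^+_j$ through $\Theta^i_j$ from Theorem \ref{thm:Lmbd}, and invoking real analyticity of nonsingular integral operators with analytic kernels together with the classical mapping properties of layer potentials. Your explicit treatment of the $n=2$ anomalies (the $\log\epsilon$ term killed by $\int_{\partial\Omega}\hat\theta^i_j[\epsilon]\,d\sigma=0$, and $\epsilon^{n-2}|_{n=2}=1$ producing a term that nevertheless vanishes at $(0,r_\ast)$ because $\int_{\partial\Omega}\tilde\theta^i_j\,d\sigma=0$) is correct and fills in a detail the paper leaves implicit.
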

 \begin{proof} If $\epsilon \in ]0,\epsilon_1[$, then a simple computation based on the rule of change of variables in integrals shows that
\[
\begin{split}
u^+_j[\epsilon](p+\epsilon t)= &\epsilon v^+\Bigl[\partial \Omega,\Theta^i_j\Bigl[\epsilon, \frac{\epsilon}{\rho(\epsilon)}\Bigr]\Bigr](t) +\epsilon^{n-1}\int_{\partial \Omega} R_{q,n}(\epsilon (t-s))\Theta^i_j\Bigl[\epsilon, \frac{\epsilon}{\rho(\epsilon)}\Bigr](s)\, d\sigma_s\\
&-\frac{\epsilon}{|\partial\Omega|_{n-1}}\int_{\partial \Omega}\Bigl(v^+\Bigl[\partial \Omega,\Theta^i_j\Bigl[\epsilon, \frac{\epsilon}{\rho(\epsilon)}\Bigr]\Bigr](s')\\
&\qquad\qquad\qquad \quad+\epsilon^{n-2}\int_{\partial \Omega}R_{q,n}(\epsilon(s'-s))\Theta^i_j\Bigl[\epsilon, \frac{\epsilon}{\rho(\epsilon)}\Bigr](s)d\sigma_s\Bigr) d\sigma_{s'}\\
& +\epsilon t_j-\frac{\epsilon}{|\partial\Omega|_{n-1}}\int_{\partial \Omega}s_j\,d\sigma_s \qquad \forall t \in \mathrm{cl}\Omega\,
\end{split}
\]
(see also Proposition \ref{prop:finteq} and Theorem \ref{thm:Lmbd}). Therefore it is natural to set
\[
\begin{split}
&U_j^{+}[\epsilon,\epsilon'](t)\equiv v^+\Bigl[\partial \Omega,\Theta^i_j\bigl[\epsilon, \epsilon'\bigr]\Bigr](t) +\epsilon^{n-2}\int_{\partial \Omega} R_{q,n}(\epsilon (t-s))\Theta^i_j\bigl[\epsilon, \epsilon'\bigr](s)\, d\sigma_s\\
&-\frac{1}{|\partial\Omega|_{n-1}}\int_{\partial \Omega}\Bigl(v^+\Bigl[\partial \Omega,\Theta^i_j\bigl[\epsilon, \epsilon'\bigr]\Bigr](s')+\epsilon^{n-2}\int_{\partial \Omega}R_{q,n}(\epsilon(s'-s))\Theta^i_j\bigl[\epsilon, \epsilon'\bigr](s)d\sigma_s\,\Bigr) d\sigma_{s'}\\
& +t_j-\frac{1}{|\partial\Omega|_{n-1}}\int_{\partial\Omega}s_j\,d\sigma_s  \qquad\qquad \qquad \qquad \forall t \in \mathrm{cl}\Omega\, ,
\end{split}
\]
 for all $(\epsilon,\epsilon') \in ]-\epsilon_1,\epsilon_1[\times{\mathcal{U}}_{r_{\ast}}$. By standard properties of integral  operators with real analytic kernels and with no singularity, by classical mapping properties of layer potentials  (cf.~\cite[\S4]{LaMu11a},  Miranda~\cite{Mi65}, Lanza de Cristoforis and Rossi \cite[Thm.~3.1]{LaRo04}) and by Theorem \ref{thm:Lmbd},  we conclude that $U_j^{+}$ is real analytic. Moreover, Theorem \ref{thm:Lmbd} implies that $\Theta^i_j[0,r_{\ast}]=\tilde{\theta}^i_j$ and thus
 the validity of equality \eqref{eq:rep2} follows (see also Theorem \ref{thm:lim}). \qquad\end{proof}

In the following Theorem \ref{thm:rep-} we investigate the behaviour of  $u^-_j[\epsilon]$ for $\epsilon$ small and positive.

\begin{theorem}\label{thm:rep-}
Let the assumptions of Theorem \ref{thm:Lmbd} hold. Let $(u^+_j[\epsilon], u^-_j[\epsilon])$ be the unique solution of problem \eqref{bvpe} for all $\epsilon \in ]0,\epsilon_0[$. Let $\tilde{l}^-_j$ be as in \eqref{rem:lim1}. Then there exists a real analytic operator $C^-_j$   from $]-\epsilon_1,\epsilon_1[\times{\mathcal{U}}_{r_{\ast}}$ to ${\mathbb{R}}$  such that 
\begin{equation}
\label{eq:rep3}
C^{-}_{j}[0,r_{\ast}]=-\frac{1}{|\partial\Omega|_{n-1}}\int_{\partial\Omega}g\,d\sigma+r_\ast \tilde{l}_j^--\frac{r_\ast}{|\partial\Omega|_{n-1}}\int_{\partial\Omega}s_j\,d\sigma_s\, ,
\end{equation}
and such that the following statements hold.
\begin{enumerate}
\item[(i)]    Let $\tilde{\Omega}$ be an open bounded subset of ${\mathbb{R}}^{n}$ such that  $\mathrm{cl}\tilde{\Omega}\cap(p+q{\mathbb{Z}}^{n})=\emptyset$. Let $k \in \mathbb{N}$.
 Then  there exist
$\epsilon_{\tilde{\Omega}}\in]0,\epsilon_1[$ 
and a real analytic operator $U^{-}_{j,\tilde\Omega}$ from $]-\epsilon_{ \tilde{\Omega} },\epsilon_{ \tilde{\Omega} }[\times{\mathcal{U}}_{r_{\ast}}$ to $C^{k}(
{\mathrm{cl}} \tilde{\Omega})$  such that $\mathrm{cl}\tilde{\Omega}\subseteq {\mathbb{S}}[ \Omega_{p,\epsilon}]^{-}$ for all $\epsilon\in ]-\epsilon_{\tilde{\Omega}},\epsilon_{\tilde{\Omega}}[$, and such that
\begin{equation}\label{eq:rep5}
u^-_j[\epsilon](x)=x_j-p_j+\rho(\epsilon)C_j^-\Bigl[\epsilon, \frac{\epsilon}{\rho(\epsilon)}\Bigr]+ \epsilon^n U^{-}_{j,\tilde{\Omega}} \Bigl[\epsilon, \frac{\epsilon}{\rho(\epsilon)}\Bigr](x)\quad\forall x\in {\mathrm{cl}}\tilde{\Omega}
\end{equation}
 for all $\epsilon\in]0,\epsilon_{\tilde{\Omega}}[$. Moreover, 
\begin{equation}
\label{eq:rep4}
\begin{split}
&U^{-}_{j,\tilde{\Omega}}[0,r_{\ast}](x)\\
& =DS_{q,n}(x-p)\left(\int_{\partial\Omega}  \nu_\Omega(s)
\tilde{u}^-_j(s)\, d\sigma_s-\int_{\partial\Omega}  s\frac{\partial}{\partial \nu_{\Omega}} \tilde{u}^-_j(s) \, d\sigma_s
\right)\quad \forall x\in {\mathrm{cl}}\tilde{\Omega}
\end{split}
\end{equation}
where $\tilde{u}^-_j$ is defined as in Theorem \ref{thm:lim}.

\item[(ii)] Let $\tilde{\Omega}$ be a bounded open subset of ${\mathbb{R}}^{n}\setminus{\mathrm{cl}}\Omega$. Then  there exist  $\epsilon^\#_{\tilde{\Omega}}\in ]0,\epsilon_1[$ and a real analytic map $V^{-}_{ j, \tilde{\Omega} }$ from $]-\epsilon^\#_{\tilde{\Omega}},\epsilon^\#_{\tilde{\Omega}}[\times{\mathcal{U}}_{r_{\ast}}$ to $C^{1,\alpha}({\mathrm{cl}}\tilde{\Omega})$ such that $p+\epsilon {\mathrm{cl}}\tilde{\Omega}\subseteq {\mathrm{cl}}{\mathbb{S}} [\Omega_{p,\epsilon}]^{-}$  for all $\epsilon\in
]-\epsilon^\#_{\tilde{\Omega}},\epsilon^\#_{\tilde{\Omega}}[$, and
\[
u^-_j[\epsilon](p+\epsilon t)=\rho(\epsilon)C_j^-\Bigl[\epsilon, \frac{\epsilon}{\rho(\epsilon)}\Bigr]+\epsilon
V^{-}_{ j,\tilde{\Omega} }\Bigl[\epsilon, \frac{\epsilon}{\rho(\epsilon)}\Bigr](t)\qquad\forall t\in {\mathrm{cl}}\tilde{\Omega} 
\]
 for all $\epsilon\in]0,\epsilon^\#_{\tilde{\Omega}}[$. Moreover, 
\begin{equation}
\label{eq:rep7}
V^{-}_{j, \tilde{\Omega} }[0,r_{\ast}](t)=\tilde{u}^-_j(t)-\tilde{l}^-_j+t_j\qquad\forall t\in {\mathrm{cl}}\tilde{\Omega} 
\end{equation} where $\tilde{u}^-_j$ is defined as in Theorem \ref{thm:lim}.
\end{enumerate}
\end{theorem}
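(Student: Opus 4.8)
The plan is to reduce everything to the representation formula for $u^-_j[\epsilon]$ already implicit in Proposition \ref{prop:finteq} and to the real analyticity of $(\Theta^i_j,\Theta^o_j)$ from Theorem \ref{thm:Lmbd}. Recall that by Proposition \ref{prop:finteq} and Theorem \ref{thm:Lmbd},
\[
u^-_j[\epsilon](x)=v_q^-\Bigl[\partial\Omega_{p,\epsilon},\Theta^o_j\Bigl[\epsilon,\tfrac{\epsilon}{\rho(\epsilon)}\Bigr]((\cdot-p)/\epsilon)\Bigr](x)-\frac{\epsilon^{1-n}}{|\partial\Omega|_{n-1}}\int_{\partial\Omega_{p,\epsilon}}\!\!v_q^-\bigl[\partial\Omega_{p,\epsilon},\dots\bigr]\,d\sigma-\rho(\epsilon)\frac{\epsilon^{1-n}}{|\partial\Omega|_{n-1}}\int_{\partial\Omega_{p,\epsilon}}\!\!g((y-p)/\epsilon)\,d\sigma_y+x_j-\frac{\epsilon^{1-n}}{|\partial\Omega|_{n-1}}\int_{\partial\Omega_{p,\epsilon}}\!\!y_j\,d\sigma_y
\]
for $x\in\mathrm{cl}\mathbb{S}[\Omega_{p,\epsilon}]^-$. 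The first step is to collect the three ``constant in $x$'' terms (the integral-average of $v_q^-$, the $g$-term, and the $y_j$-average) into a single quantity; after the change of variables $y=p+\epsilon s$ the factors $\epsilon^{1-n}$ cancel the surface-measure Jacobian $\epsilon^{n-1}$, and $\int_{\partial\Omega_{p,\epsilon}}y_j\,d\sigma_y=\epsilon^{n-1}\int_{\partial\Omega}(p_j+\epsilon s_j)\,d\sigma_s$, so the $p_j$ piece produces the $-p_j$ that appears in \eqref{eq:rep5} (note $\int_{\partial\Omega}v^-[\partial\Omega,\theta^o]\,d\sigma$ already has its own averaging built in via \eqref{rem:lim1}). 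One then \emph{defines} $C^-_j[\epsilon,\epsilon']$ to be $1/\rho(\epsilon)$ times that collected constant, i.e.\ rewriting in terms of $\epsilon'$ playing the role of $\epsilon/\rho(\epsilon)$: concretely $C^-_j[\epsilon,\epsilon']$ should equal $-\frac{1}{|\partial\Omega|_{n-1}}\int_{\partial\Omega}v^-[\partial\Omega,\Theta^o_j[\epsilon,\epsilon']]\,d\sigma\cdot\epsilon'$ (coming from $-\epsilon^{1-n}\int\dots = -\epsilon\cdot\frac{1}{|\partial\Omega|_{n-1}}\int v^-\dots$, and one extra $\epsilon'/\epsilon$ is \emph{not} there — rather the term carries a bare $\epsilon$, hence belongs to the $V^-$/$U^-$ part) together with $-\frac{1}{|\partial\Omega|_{n-1}}\int_{\partial\Omega}g\,d\sigma$ and $-\frac{\epsilon'}{|\partial\Omega|_{n-1}}\int_{\partial\Omega}s_j\,d\sigma_s$; the precise bookkeeping of which terms carry a factor $\rho(\epsilon)$ and which carry a bare $\epsilon$ is exactly what makes the split in \eqref{eq:rep5} work, and sorting this out is the one genuinely delicate computation. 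Evaluating the resulting expression at $(\epsilon,\epsilon')=(0,r_\ast)$ and using $\Theta^o_j[0,r_\ast]=\tilde\theta^o_j$ together with \eqref{rem:lim1} gives \eqref{eq:rep3}; real analyticity of $C^-_j$ is immediate from Theorem \ref{thm:Lmbd} and the continuity of $\theta^o\mapsto\int_{\partial\Omega}v^-[\partial\Omega,\theta^o]\,d\sigma$.

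For part (i), fix $\tilde\Omega$ with $\mathrm{cl}\tilde\Omega\cap(p+q\mathbb{Z}^n)=\emptyset$; then for $\epsilon$ small, $\mathrm{cl}\tilde\Omega$ sits in $\mathbb{S}[\Omega_{p,\epsilon}]^-$ and, crucially, $S_{q,n}(x-p-\epsilon s)$ is real analytic jointly in $(\epsilon,x,s)$ on a neighbourhood of $\{0\}\times\mathrm{cl}\tilde\Omega\times\partial\Omega$ because $x-p$ stays away from $q\mathbb{Z}^n$. Writing $v_q^-[\partial\Omega_{p,\epsilon},\theta((\cdot-p)/\epsilon)](x)=\epsilon^{n-1}\int_{\partial\Omega}S_{q,n}(x-p-\epsilon s)\theta(s)\,d\sigma_s$ and Taylor-expanding $S_{q,n}(x-p-\epsilon s)=S_{q,n}(x-p)-\epsilon\,DS_{q,n}(x-p)s+O(\epsilon^2)$, the leading term $\epsilon^{n-1}S_{q,n}(x-p)\int_{\partial\Omega}\theta\,d\sigma$ vanishes since $\theta=\Theta^o_j[\epsilon,\epsilon/\rho(\epsilon)]\in C^{0,\alpha}(\partial\Omega)_0$, so the whole simple-layer contribution is $\epsilon^n$ times a real analytic $C^k(\mathrm{cl}\tilde\Omega)$-valued map. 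Subtracting the constant $x_j-p_j+\rho(\epsilon)C^-_j[\epsilon,\epsilon/\rho(\epsilon)]$ leaves exactly $\epsilon^n U^-_{j,\tilde\Omega}[\epsilon,\epsilon/\rho(\epsilon)](x)$; one then \emph{defines} $U^-_{j,\tilde\Omega}[\epsilon,\epsilon']$ by the corresponding $\epsilon'$-parametrized formula and checks analyticity via the standard results on integral operators with real analytic kernels (as cited in the proof of Theorem \ref{thm:rep+}). Finally, to get \eqref{eq:rep4} one evaluates at $(0,r_\ast)$: the coefficient of $DS_{q,n}(x-p)$ is $-\int_{\partial\Omega}s\,\tilde\theta^o_j(s)\,d\sigma_s$, and an integration-by-parts / Green's identity argument on the exterior harmonic function $\tilde u^-_j$ (using $\tilde u^-_j=v^-[\partial\Omega,\tilde\theta^o_j]-(\text{const})$, the jump relations, and $\Delta\tilde u^-_j=0$) rewrites this as $\int_{\partial\Omega}\nu_\Omega(s)\tilde u^-_j(s)\,d\sigma_s-\int_{\partial\Omega}s\,\frac{\partial\tilde u^-_j}{\partial\nu_\Omega}(s)\,d\sigma_s$. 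This last rewriting — recognizing that $\tilde\theta^o_j$ is (up to the harmonic-at-infinity correction) the jump of $\partial_\nu v^-$ and converting the ``moments of the density'' into ``moments of the field'' — is the main conceptual point of part (i).

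For part (ii), take $\tilde\Omega$ bounded open in $\mathbb{R}^n\setminus\mathrm{cl}\Omega$ and evaluate $u^-_j[\epsilon]$ at $x=p+\epsilon t$, $t\in\mathrm{cl}\tilde\Omega$. Now the same change of variables turns $v_q^-[\partial\Omega_{p,\epsilon},\Theta^o_j((\cdot-p)/\epsilon)](p+\epsilon t)$ into $\epsilon^{n-1}\int_{\partial\Omega}S_{q,n}(\epsilon(t-s))\Theta^o_j(s)\,d\sigma_s$; splitting $S_{q,n}=S_n+R_{q,n}$ and using homogeneity of $S_n$ gives $\epsilon\,v^-[\partial\Omega,\Theta^o_j](t)+\epsilon^{n-1}\int_{\partial\Omega}R_{q,n}(\epsilon(t-s))\Theta^o_j(s)\,d\sigma_s$, the second term being $\epsilon\cdot(\text{real analytic})$ because $R_{q,n}$ is analytic near $0$ and $\int\Theta^o_j\,d\sigma=0$ kills the potential order-$\epsilon^{n-1}$ constant (for $n\ge 3$ one gains even more; the statement only claims order $\epsilon$, which is safe). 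After subtracting the constant $x_j$-related and $\rho(\epsilon)C^-_j$ terms as in (i) — here $x_j=p_j+\epsilon t_j$, so $p_j$ is absorbed into $C^-_j$ and $\epsilon t_j$ survives — one is left with $\rho(\epsilon)C^-_j[\epsilon,\epsilon/\rho(\epsilon)]+\epsilon V^-_{j,\tilde\Omega}[\epsilon,\epsilon/\rho(\epsilon)](t)$; define $V^-_{j,\tilde\Omega}[\epsilon,\epsilon']$ accordingly, invoke the analytic-kernel results for analyticity in $C^{1,\alpha}(\mathrm{cl}\tilde\Omega)$, and at $(0,r_\ast)$ use $\Theta^o_j[0,r_\ast]=\tilde\theta^o_j$, the definition \eqref{eq:lim2} of $\tilde u^-_j$, and \eqref{rem:lim1} to identify the limit as $v^-[\partial\Omega,\tilde\theta^o_j](t)+t_j=\tilde u^-_j(t)-\tilde l^-_j+t_j$, giving \eqref{eq:rep7}. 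The bulk of the work in (ii) is routine once (i)'s bookkeeping of the constant terms is settled; the one thing to be careful about is that the change of variables and the $S_n$-homogeneity scaling are applied consistently so that exactly one factor of $\epsilon$ is extracted and the $\epsilon'$-dependence enters only through $\Theta^o_j$ and the explicit $C^-_j$.
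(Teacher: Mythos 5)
Your strategy is the paper's: represent $u^-_j[\epsilon]$ via Proposition \ref{prop:finteq} with $\theta^o=\Theta^o_j[\epsilon,\epsilon/\rho(\epsilon)]$, change variables, use a Taylor formula with integral remainder centred at $S_{q,n}(x-p)$ for part (i), split $S_{q,n}=S_n+R_{q,n}$ with the homogeneity of $S_n$ for part (ii), and convert moments of $\tilde\theta^o_j$ into moments of $\tilde u^-_j$ via Green's identity to get \eqref{eq:rep4}. But you have not actually settled the bookkeeping you yourself flag as the one genuinely delicate step, and the parenthetical where you address it contradicts the sentence preceding it.

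Two concrete problems. First, your proposed $C^-_j[\epsilon,\epsilon']$ is incomplete. After the substitution, the average of the periodic layer potential is
\[
-\frac{\epsilon}{|\partial\Omega|_{n-1}}\int_{\partial\Omega}\Bigl(v^-[\partial\Omega,\Theta^o_j[\epsilon,\epsilon']](t)+\epsilon^{n-2}\int_{\partial\Omega}R_{q,n}(\epsilon(t-s))\Theta^o_j[\epsilon,\epsilon'](s)\,d\sigma_s\Bigr)d\sigma_t\,,
\]
and \emph{both} summands must be carried by $\rho(\epsilon)C^-_j$, with coefficient $\epsilon'$ after dividing by $\rho(\epsilon)$; you listed only the $v^-[\partial\Omega,\cdot]$ part. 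The $R_{q,n}$ piece is of overall size $\epsilon^{n-1}$, so it cannot go into $\epsilon^n U^-_{j,\tilde\Omega}$ (that would force a singular $\epsilon^{-1}$), and its absence breaks the identity \eqref{eq:rep5}. The reason \eqref{eq:rep3} still looks right in your version is only that this term vanishes at $(0,r_\ast)$: for $n\geq 3$ by the explicit $\epsilon^{n-2}$ factor, and for $n=2$ because $R_{q,2}(0)\int_{\partial\Omega}\tilde\theta^o_j\,d\sigma=0$.

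Second, the parenthetical "rather the term carries a bare $\epsilon$, hence belongs to the $V^-/U^-$ part" is wrong, and it negates the correct statement you made immediately before it. $C^-_j$ must be one operator serving both parts (i) and (ii). In part (ii) the residual decomposition is $\rho(\epsilon)C^-_j+\epsilon V^-_{j,\tilde\Omega}$, where an order-$\epsilon$ constant could in principle be shunted into $\epsilon V^-$; but in part (i) it is $x_j-p_j+\rho(\epsilon)C^-_j+\epsilon^n U^-_{j,\tilde\Omega}$, and a constant of bare order $\epsilon$ has nowhere to go except $\rho(\epsilon)C^-_j$, since $\epsilon/\epsilon^n$ is singular for $n\geq2$. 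The only globally consistent choice is the paper's: the $v^-$-average (with its $R_{q,n}$ correction) and the $s_j$-average go into $C^-_j$ multiplied by $\epsilon'$, and only the $g$-average enters with coefficient $1$. Because you left the crux unresolved with a self-contradiction, this is a genuine gap, even though the remaining steps (Taylor remainder, analyticity of kernel operators, Green's identity, and the part (ii) splitting) are correctly sketched and coincide with the paper's argument.
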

 \begin{proof}We set 
 \begin{equation}\label{cj-}
 \begin{split}
 &C_j^-[\epsilon,\epsilon']\equiv -\frac{1}{ |\partial\Omega|_{n-1}}\int_{\partial \Omega}g\, d\sigma-\frac{\epsilon'}{|\partial\Omega|_{n-1}}\int_{\partial \Omega}\Bigl(v^-\Bigl[\partial \Omega,\Theta^o_j\bigl[\epsilon, \epsilon'\bigr]\Bigr](t)\\&\ +\epsilon^{n-2}\int_{\partial \Omega}R_{q,n}(\epsilon(t-s))\Theta^o_j\bigl[\epsilon, \epsilon'\bigr](s)d\sigma_s\,\Bigr) d\sigma_t-\frac{\epsilon'}{|\partial\Omega|_{n-1}}\int_{\partial \Omega}t_j\, d\sigma_t
 \end{split}
 \end{equation}
 for all $(\epsilon,\epsilon') \in  ]-\epsilon_1 ,\epsilon_1[\times{\mathcal{U}}_{r_{\ast}}$.   By standard properties of integral  operators with real analytic kernels and with no singularity, by classical mapping properties of layer potentials  (cf.~\cite[\S   4]{LaMu11a},  Miranda~\cite{Mi65}, Lanza de Cristoforis and Rossi \cite[Thm.~3.1]{LaRo04}) and by Theorem \ref{thm:Lmbd},  we deduce that $C_j^-$ is real analytic.     Then,  by \eqref{rem:lim1} and by equality $\Theta^o_j[0,r_\ast]=\tilde{\theta}^o_j$ one verifies the validity of \eqref{eq:rep3}.

We now  consider the proof of statement (i). By taking $\epsilon_{\tilde{\Omega}}$ small enough, we can  assume that $\mathrm{cl}\tilde{\Omega}\subseteq {\mathbb{S}}[ \Omega_{p,\epsilon}]^{-}$  for all $\epsilon\in [-\epsilon_{\tilde{\Omega}},\epsilon_{\tilde{\Omega}}]$. By Proposition \ref{prop:finteq} and Theorem \ref{thm:Lmbd}, we have 
\[
\begin{split}
 u^-_j[\epsilon](x)=&\epsilon^{n-1}\int_{\partial \Omega} S_{q,n}(x-p-\epsilon s)\Theta^o_j\Bigl[\epsilon, \frac{\epsilon}{\rho(\epsilon)}\Bigr](s)\, d\sigma_s\\&-\frac{\epsilon}{|\partial\Omega|_{n-1}}\int_{\partial \Omega}\Bigl(v^-\Bigl[\partial \Omega,\Theta^o_j\Bigl[\epsilon, \frac{\epsilon}{\rho(\epsilon)}\Bigr]\Bigr](t)\\
 &\qquad \qquad \qquad \quad+\epsilon^{n-2}\int_{\partial \Omega}R_{q,n}(\epsilon(t-s))\Theta^o_j\Bigl[\epsilon, \frac{\epsilon}{\rho(\epsilon)}\Bigr](s)d\sigma_s\,\Bigr) d\sigma_t\\
&-\frac{\rho(\epsilon)}{|\partial\Omega|_{n-1}}\int_{\partial \Omega}g\, d\sigma +x_j -p_j-\frac{\epsilon}{|\partial\Omega|_{n-1}}\int_{\partial \Omega}s_j\,d\sigma_s \qquad\qquad \forall x \in \mathrm{cl}\tilde{\Omega}\,,
\end{split}
\]
for all $\epsilon\in]0,\epsilon_{\tilde{\Omega}}[$. Then we note that if $\epsilon\in [-\epsilon_{ \tilde{\Omega}},\epsilon_{ \tilde{\Omega}}] $, then 
$\partial{\mathbb{S}}[\Omega_{p,\epsilon}]\cap {\mathrm{cl}}{\tilde{\Omega}}=\emptyset$.
Also, we observe that if $x\in {\mathrm{cl}}\tilde{\Omega}$, then   $x-p-\epsilon \beta s$ does not belong to $q {\mathbb{Z}}^{n}$
for any $s\in\partial\Omega$, $\epsilon \in]-\epsilon_{ \tilde{\Omega}},\epsilon_{ \tilde{\Omega}}[$, and $\beta\in[0,1]$. Accordingly, we can invoke the Taylor formula with integral residue and write
\[
S_{q,n}(x-p-\epsilon s)-S_{q,n}(x-p)=
-\epsilon\int_{0}^{1}DS_{q,n}(x-p-\beta\epsilon s)  s\,d\beta
  \,,
\]
for all $(x,s)\in{\mathrm{cl}} \tilde{\Omega} \times\partial\Omega$ and $\epsilon\in]0,\epsilon_{\tilde{\Omega}}[$. Since
$\int_{\partial\Omega}\Theta^o_j\bigl[\epsilon, \epsilon/\rho(\epsilon)\bigr]\,d\sigma=0$ for all $\epsilon\in]0,\epsilon_{\tilde{\Omega}}[$, we conclude that
\begin{equation}\label{eq:rep6}
\begin{split}
u^-_j[\epsilon](x)
=&- \epsilon^{n }
\int_{\partial\Omega}
\left(\int_{0}^{1}DS_{q,n}(x-p-\beta\epsilon s)  s\,d\beta\right)
\Theta^o_j\Bigl[\epsilon, \frac{\epsilon}{\rho(\epsilon)}\Bigr](s)\,d\sigma_{s}\\
&-\frac{\epsilon}{|\partial\Omega|_{n-1}}\int_{\partial \Omega}\biggl(v^-\Bigl[\partial \Omega,\Theta^o_j\Bigl[\epsilon, \frac{\epsilon}{\rho(\epsilon)}\Bigr]\Bigr](t)\\
&\qquad \qquad \qquad \quad+\epsilon^{n-2}\int_{\partial \Omega}R_{q,n}(\epsilon(t-s))\Theta^o_j\Bigl[\epsilon, \frac{\epsilon}{\rho(\epsilon)}\Bigr](s)d\sigma_s\,\biggr) d\sigma_t\\
&-\frac{\rho(\epsilon)}{|\partial\Omega|_{n-1}}\int_{\partial \Omega}g\, d\sigma +x_j -p_j-\frac{\epsilon}{|\partial\Omega|_{n-1}}\int_{\partial \Omega}s_j\,d\sigma_s
\end{split}
\end{equation} for all $x\in
{\mathrm{cl}}\tilde{\Omega}$ and 
for all $\epsilon\in]0,\epsilon_{\tilde{\Omega}}[$. Thus it is natural to set
\begin{equation}\label{eq:rep7a}
\begin{split}
&U^{-}_{j, \tilde{\Omega} }[\epsilon,\epsilon'](x)\\
&\quad\equiv - \int_{\partial\Omega}\left(\int_{0}^{1}DS_{q,n}(x-p-\beta\epsilon s)  s\,d\beta\right)\Theta^o_j\bigl[\epsilon, \epsilon' \bigr](s)\,d\sigma_{s}
\quad\forall x\in \mathrm{cl}\tilde{\Omega}\,
\end{split}
\end{equation}
for all $(\epsilon,\epsilon') \in  ]-\epsilon_{\tilde{\Omega}} ,\epsilon_{\tilde{\Omega}}[\times{\mathcal{U}}_{r_{\ast}}$.  Then the validity of \eqref{eq:rep5} follows by definitions \eqref{cj-}, \eqref{eq:rep7a} and equality \eqref{eq:rep6}. 
By standard properties of integral  operators with real analytic kernels and with no singularity (cf., \textit{e.g.},  \cite[\S 3]{LaMu11a}) and by arguing exactly as in the proof of statement (i) of \cite[Thm.~5.1]{LaMu12}, one verifies that $U^{-}_{j,\tilde{\Omega}}$ defines a real analytic map from $]-\epsilon_{\tilde{\Omega}} ,\epsilon_{\tilde{\Omega}}[\times{\mathcal{U}}_{r_{\ast}}$ to $C^{k}({\mathrm{cl}} \tilde{\Omega})$. Next we turn to prove formula \eqref{eq:rep4}. 
Theorem \ref{thm:Lmbd} implies that $\Theta^o_j[0,r_{\ast}]=\tilde{\theta}^o_j$. Then we fix
 $k \in \{1,\dots,n\}$. By well known jump formulae for the normal derivative of the classical simple layer potential, we have
\[
\int_{\partial \Omega}s_k\tilde{\theta}^o_j(s)\, d\sigma_s=\int_{\partial \Omega}s_k \frac{\partial }{\partial \nu_\Omega}v^-[\partial \Omega,\tilde{\theta}^o_j](s)\,d\sigma_s-\int_{\partial \Omega}s_k \frac{\partial }{\partial \nu_\Omega}v^+[\partial \Omega,\tilde{\theta}^o_j](s)\,d\sigma_s\, .
\]
Then by the Green Identity, we have
\[
\int_{\partial \Omega}s_k \frac{\partial }{\partial \nu_\Omega}v^+[\partial \Omega,\tilde{\theta}^o_j](s)\,d\sigma_s=\int_{\partial \Omega} (\nu_\Omega(s))_k v^+[\partial \Omega,\tilde{\theta}^o_j](s)\,d\sigma_s\, .
\]
Moreover,
\[
 \frac{\partial }{\partial \nu_\Omega}\tilde{u}^-_j=\frac{\partial }{\partial \nu_\Omega}v^-[\partial \Omega,\tilde{\theta}^o_j] \qquad \textrm{on $\partial \Omega$}\, ,
\]
and
\begin{equation}\label{intnu}
\int_{\partial \Omega} (\nu_\Omega(s))_k \,d\sigma_s=0\, .
\end{equation}
As a consequence,
\[
\int_{\partial \Omega}s_k\tilde{\theta}^o_j(s)\, d\sigma_s=\int_{\partial \Omega}s_k \frac{\partial }{\partial \nu_\Omega}\tilde{u}^-_j(s)\, d\sigma_s-\int_{\partial \Omega} (\nu_\Omega(s))_k \tilde{u}^-_j(s)\, d\sigma_s\,,
\]
and accordingly \eqref{eq:rep4} holds.
 
We now consider statement (ii). By assumption, there exists $R>0$ such that $({\mathrm{cl}}\tilde{\Omega}\cup{\mathrm{cl}}{\Omega})\subseteq {\mathbb{B}}_{n}(0,R)$. Then we set $\Omega^\ast\equiv {\mathbb{B}}_{n}(0,R)\setminus{\mathrm{cl}}\Omega$.  Then there exists $\epsilon^\#_{\Omega^\ast}\in ]0,\epsilon_1[$ such that $p+\epsilon {\mathrm{cl}}\Omega^\ast \subseteq Q$, and $p+\epsilon\Omega^\ast\subseteq 
{\mathbb{S}}[\Omega_{p,\epsilon}]^{-}$, for all $\epsilon\in[-\epsilon^\#_{ \Omega^\ast} ,\epsilon^\#_{ \Omega^\ast} ]\setminus\{0\}$ (cf.~\cite[Lemma A.5 (ii)]{LaMu12}). Then we set $\epsilon^\#_{\tilde{\Omega}}\equiv \epsilon^\#_{\Omega^\ast}$. It clearly suffices to show that $V^{-}_{ j,\Omega^\ast }$ exists and then to set 
$V^{-}_{j, \tilde{\Omega} }$ equal to the composition of the restriction of $C^{1,\alpha}({\mathrm{cl}}\Omega^\ast)$ to $C^{1,\alpha}({\mathrm{cl}}\tilde{\Omega})$ with $V^{-}_{ j,\Omega^\ast }$. The advantage of $\Omega^\ast$ with respect to $\tilde{\Omega}$ is that $\Omega^\ast$  is of class $C^{1}$ and that accordingly $C^{2}({\mathrm{cl}}\Omega^\ast)$ is continuously imbedded into $C^{1,\alpha}({\mathrm{cl}}\Omega^\ast)$, a fact which we exploit below (cf., \textit{e.g.}, Lanza de Cristoforis \cite[Lem.~2.4 (ii)]{La91}). 

By equality
$\int_{\partial\Omega}\Theta^o_j\bigl[\epsilon, \epsilon/\rho(\epsilon)\bigr](s)\,d\sigma_{s}=0$,  
and by a simple computation based on the rule of change of variables in integrals, we have
\[
\begin{split}
&u^-_j[\epsilon](p+\epsilon t)
=\epsilon^{n-1}\int_{\partial\Omega}S_{q,n}(\epsilon (t-s))\Theta^o_j\Bigl[\epsilon, \frac{\epsilon}{\rho(\epsilon)}\Bigr](s)\,d\sigma_{s}+\rho(\epsilon)C^-_j\Bigl[\epsilon, \frac{\epsilon}{\rho(\epsilon)}\Bigr]+\epsilon t_j \\
&\ =\epsilon\biggl(\biggr.
\int_{\partial\Omega}S_{n}(  t-s)\Theta^o_j\Bigl[\epsilon, \frac{\epsilon}{\rho(\epsilon)}\Bigr](s)\,d\sigma_{s} +\epsilon^{n-2}  
\int_{\partial\Omega}R_{q,n}(\epsilon (t-s))\Theta^o_j\Bigl[\epsilon, \frac{\epsilon}{\rho(\epsilon)}\Bigr](s)\,d\sigma_{s}
\biggl.\biggr)\\
&
\quad +\rho(\epsilon)C^-_j\Bigl[\epsilon, \frac{\epsilon}{\rho(\epsilon)}\Bigr]+\epsilon t_j 
\qquad\qquad\qquad \forall t\in {\mathrm{cl}}\Omega^\ast\,,
\end{split}
\]
for all $\epsilon\in]0,\epsilon^\#_{\tilde{\Omega}}[$ (see also \eqref{cj-}). Thus it is natural to set
\begin{equation}
\label{est6}
\begin{split}
&V^{-}_{j,\Omega^\ast}[\epsilon,\epsilon'](t)
\equiv \int_{\partial\Omega}S_{n}(   t-s)\Theta^o_j [\epsilon,\epsilon'](s)\,d\sigma_{s}
\\ &\qquad
 \qquad
+ \epsilon^{n-2}  
\int_{\partial\Omega}R_{q,n}(\epsilon (t-s))\Theta^o_j [\epsilon,\epsilon'](s)\,d\sigma_{s}
+ t_j\qquad
\forall t\in {\mathrm{cl}}\Omega^\ast\,,
\end{split}
\end{equation}
for all $(\epsilon,\epsilon')\in ]-\epsilon^\#_{\tilde{\Omega}},\epsilon^\#_{\tilde{\Omega}}[\times \mathcal{U}_{r_{\ast}}$. Since $v^{-}[\partial\Omega,\cdot]_{|{\mathrm{cl}}
\Omega^\ast
}$ is linear and continuous from $C^{0,\alpha}(\partial\Omega)$ to $C^{1,\alpha}({\mathrm{cl}}\Omega^\ast)$ and $\Theta^o_j$ is real analytic, the map from $ ]-\epsilon^\#_{\tilde{\Omega}},\epsilon^\#_{\tilde{\Omega}}[\times \mathcal{U}_{r_{\ast}}$  to $C^{1,\alpha}({\mathrm{cl}}\Omega^\ast)$ which takes $(\epsilon,\epsilon')$ to the function $\int_{\partial\Omega}S_{n}(   t-s)\Theta^o_j [\epsilon,\epsilon'](s)\,d\sigma_{s}$ of the variable $t \in \mathrm{cl}\Omega^\ast$ is real analytic (cf., \textit{e.g.}, Miranda~\cite{Mi65}, Lanza  de Cristoforis and Rossi \cite[Thm.~3.1]{LaRo04}). 
Clearly, we have $(p+\epsilon {\mathrm{cl}}\Omega^\ast)\cap (\partial{\mathbb{S}}[\Omega_{p,\epsilon}]\setminus Q)=\emptyset$ for all $\epsilon\in ]-\epsilon^\#_{\tilde{\Omega}},\epsilon^\#_{\tilde{\Omega}}[$.
As a consequence, standard properties of integral  operators with real analytic
kernels and with no singularity imply that the map from $
]-\epsilon^\#_{\tilde{\Omega}},\epsilon^\#_{\tilde{\Omega}}[\times
L^{1}(\partial\Omega)$ to $C^{2}( {\mathrm{cl}}\Omega^\ast)$ which
takes
$(\epsilon,f)$ to the function $\int_{\partial\Omega}R_{q,n}(\epsilon
(t-s))f(s)\,d\sigma_{s}$ of the variable $t \in
{\mathrm{cl}}\Omega^\ast$ is real analytic (cf.~\cite[\S 4]{LaMu11a}).
Then 
by the analyticity of $\Theta^o_j$ and by the continuity of the imbeddings of $C^{0,\alpha}(\partial\Omega)_{0}$ into $L^{1}(\partial\Omega) $ and of $C^{2}(\mathrm{cl}\Omega^\ast)$ into $C^{1,\alpha}(\mathrm{cl}\Omega^\ast)$, we conclude that the map from 
$ ]-\epsilon^\#_{\tilde{\Omega}},\epsilon^\#_{\tilde{\Omega}}[\times \mathcal{U}_{r_{\ast}}$ to $C^{1,\alpha}( {\mathrm{cl}}\Omega^\ast)$  which takes $(\epsilon, \epsilon')$ to the second term in the right hand side of \eqref{est6} is real analytic. Then, by standard calculus in Banach space, we deduce that  $V^{-}_{j,\Omega^\ast}$ is real analytic. By Theorem \ref{thm:lim}, by equality $\Theta^o_j[0,r_\ast]=\tilde{\theta}^o_j$, and by \eqref{rem:lim1}, the validity of \eqref{eq:rep7} follows. Thus the proof is complete.\qquad\end{proof}

\section{A functional analytic representation theorem for the effective conductivity}\label{eff}

In following theorem we answer to the question in \eqref{question}.

\begin{theorem}\label{thm:S}
Let the assumptions of Theorem \ref{thm:Lmbd} hold. Let $k \in \{1,\dots,n\}$. Then there exist $\epsilon_2 \in ]0,\epsilon_1[$ and a real analytic function $\Lambda_{kj}$ from $]-\epsilon_2,\epsilon_2[\times{\mathcal{U}}_{r_{\ast}}$ to $\mathbb{R}$ such that
\begin{equation}\label{eq:S0} 
\begin{split}
\lambda^{\mathrm{eff}}_{kj}[\epsilon]=&\lambda^-\delta_{k,j}+ \epsilon^n
\Lambda_{kj}\Bigl[\epsilon, \frac{\epsilon}{\rho(\epsilon)}\Bigr]\, ,
\end{split}
\end{equation}
for all $\epsilon \in ]0,\epsilon_2[$. Moreover,
\begin{equation}\label{eq:S1}
\begin{split}
\Lambda_{kj}[0,r_{\ast}]=&\frac{1}{|Q|_n}\left(\lambda^+\int_{\partial \Omega}  \tilde{u}_j^+ (t)(\nu_{\Omega}(t))_k\, d\sigma_t-\lambda^-\int_{ \partial \Omega}  \tilde{u}_j^- (t)(\nu_{\Omega}(t))_k\, d\sigma_t\right)\\
&+\frac{|\Omega|_n}{|Q|_n}(\lambda^+-\lambda^-)\delta_{k,j}+\frac{1}{|Q|_n}\int_{\partial\Omega}f(t)t_k\,d\sigma_t\,,
\end{split}
\end{equation}  where $|\Omega|_n$ denotes the $n$-dimensional measure of $\Omega$ and where $\tilde{u}^+_j$, $\tilde{u}^-_j$ are defined as in Theorem \ref{thm:lim}.
\end{theorem}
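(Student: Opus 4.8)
The plan is to start from Definition~\ref{def:S} and to substitute into the expression for $\lambda^{\mathrm{eff}}_{kj}[\epsilon]$ the representation formulae for $u^+_j[\epsilon]$ and $u^-_j[\epsilon]$ obtained in Theorems~\ref{thm:rep+} and \ref{thm:rep-}. First I would convert all the volume integrals in Definition~\ref{def:S} into boundary integrals over $\partial\Omega_{p,\epsilon}$. For the term on the inclusions one uses that $u^+_j[\epsilon]$ is harmonic in ${\mathbb{S}}[\Omega_{p,\epsilon}]$, so that $\int_{\Omega_{p,\epsilon}}\partial_{x_k}u^+_j[\epsilon]\,dx=\int_{\partial\Omega_{p,\epsilon}}(\nu_{\Omega_{p,\epsilon}}(x))_k u^+_j[\epsilon](x)\,d\sigma_x$; for the matrix term one applies the divergence theorem on $Q\setminus\mathrm{cl}\Omega_{p,\epsilon}$, taking into account that the contributions on $\partial Q$ cancel out by $q$-periodicity up to the jump $\delta_{h,j}q_{hh}$ of $u^-_j[\epsilon]$ in the direction $e_j$, which produces a term proportional to $|Q|_n\delta_{k,j}$ (this is exactly where the leading term $\lambda^-\delta_{k,j}$ comes from, together with the $(\lambda^+-\lambda^-)|\Omega|_n\delta_{k,j}/|Q|_n$ correction). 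The sign is reversed on $\partial\Omega_{p,\epsilon}$ since $\nu_{\Omega_{p,\epsilon}}$ is the inward normal of $Q\setminus\mathrm{cl}\Omega_{p,\epsilon}$.

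Next I would perform the change of variable $x=p+\epsilon t$ in the resulting boundary integrals, so that they become integrals over the fixed set $\partial\Omega$ with a factor $\epsilon^{n-1}$ from $d\sigma_x=\epsilon^{n-1}d\sigma_t$. Then I insert $u^+_j[\epsilon](p+\epsilon t)=\epsilon\,U^+_j[\epsilon,\epsilon/\rho(\epsilon)](t)$ from Theorem~\ref{thm:rep+} and $u^-_j[\epsilon](p+\epsilon t)=\rho(\epsilon)C^-_j[\epsilon,\epsilon/\rho(\epsilon)]+\epsilon\,V^-_{j,\Omega}[\epsilon,\epsilon/\rho(\epsilon)](t)$ from Theorem~\ref{thm:rep-}(ii) (applied with $\tilde\Omega$ a neighbourhood of $\mathrm{cl}\Omega$, e.g.\ a large ball, so that one may restrict to $\partial\Omega$). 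The constant term $\rho(\epsilon)C^-_j$ multiplying $\int_{\partial\Omega}(\nu_\Omega(t))_k\,d\sigma_t$ vanishes by the identity \eqref{intnu}, which is the crucial point making the nasty singular factor $\rho(\epsilon)$ disappear. Similarly the change of variable in the forcing term gives $\epsilon^{n-1}\cdot\epsilon\int_{\partial\Omega}f(t)t_k\,d\sigma_t$ plus a contribution from $p_k$ that integrates to zero against $\int_{\partial\Omega}f\,d\sigma=0$. Collecting the powers of $\epsilon$, every surviving term carries a global factor $\epsilon^n$ except precisely the $\delta_{k,j}$ terms coming from the periodic divergence-theorem manipulation, so one is led to define
\[
\Lambda_{kj}[\epsilon,\epsilon']\equiv\frac{1}{|Q|_n}\Biggl(\lambda^+\!\int_{\partial\Omega}\!U^+_j[\epsilon,\epsilon'](t)(\nu_\Omega(t))_k\,d\sigma_t-\lambda^-\!\int_{\partial\Omega}\!V^-_{j,\Omega}[\epsilon,\epsilon'](t)(\nu_\Omega(t))_k\,d\sigma_t\Biggr)+\frac{|\Omega|_n}{|Q|_n}(\lambda^+-\lambda^-)\delta_{k,j}+\frac{1}{|Q|_n}\int_{\partial\Omega}f(t)t_k\,d\sigma_t
\]
for $(\epsilon,\epsilon')\in]-\epsilon_2,\epsilon_2[\times\mathcal{U}_{r_\ast}$, with $\epsilon_2$ small enough that the representation theorems apply. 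Real analyticity of $\Lambda_{kj}$ is then immediate from the real analyticity of $U^+_j$ and $V^-_{j,\Omega}$ established in Theorems~\ref{thm:rep+} and \ref{thm:rep-}, together with the continuity of the linear functional $h\mapsto\int_{\partial\Omega}h(t)(\nu_\Omega(t))_k\,d\sigma_t$ on $C^{1,\alpha}(\mathrm{cl}\Omega)$ (via the trace operator).

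Finally, to obtain the limiting value \eqref{eq:S1} I would evaluate at $(\epsilon,\epsilon')=(0,r_\ast)$, using \eqref{eq:rep2}, namely $U^+_j[0,r_\ast](t)=\tilde u^+_j(t)+t_j-|\partial\Omega|_{n-1}^{-1}\int_{\partial\Omega}s_j\,d\sigma_s$, and \eqref{eq:rep7}, namely $V^-_{j,\Omega}[0,r_\ast](t)=\tilde u^-_j(t)-\tilde l^-_j+t_j$. The additive constants $-|\partial\Omega|_{n-1}^{-1}\int_{\partial\Omega}s_j\,d\sigma_s$ and $-\tilde l^-_j$ drop out once integrated against $(\nu_\Omega)_k$ by \eqref{intnu}, and the two $t_j$ terms produce $\lambda^+\int_{\partial\Omega}t_j(\nu_\Omega(t))_k\,d\sigma_t-\lambda^-\int_{\partial\Omega}t_j(\nu_\Omega(t))_k\,d\sigma_t=(\lambda^+-\lambda^-)|\Omega|_n\delta_{k,j}$ by the divergence theorem, which exactly cancels against the already present $\frac{|\Omega|_n}{|Q|_n}(\lambda^+-\lambda^-)\delta_{k,j}$ term... so I must be careful here: the $t_j$ contribution should instead combine with the divergence-theorem bookkeeping so that \eqref{eq:S1} comes out as stated, i.e.\ the $t_j$ pieces of $U^+_j$ and $V^-_{j,\Omega}$ together with the explicit $\delta_{k,j}$ term must be reconciled. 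I expect the bookkeeping of these constant and linear pieces — keeping precise track of which $\delta_{k,j}$ contributions come from the periodicity jump, which from the $x_j-p_j$ appearing in the representation formulae, and which from the normalisation constants — to be the main obstacle; everything else is a routine combination of the divergence theorem, the change of variables $x=p+\epsilon t$, and the already-established real analyticity.
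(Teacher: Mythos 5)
Your overall strategy is the same as the paper's: convert the volume integrals in Definition~\ref{def:S} to boundary integrals by the Divergence Theorem (using periodicity of $u^-_j[\epsilon]-x_j$ to handle $\partial Q$), rescale to $\partial\Omega$, plug in the representations from Theorems~\ref{thm:rep+} and \ref{thm:rep-}(ii), and observe that the constant $\rho(\epsilon)C^-_j$ drops out against \eqref{intnu}. These are exactly the steps the paper takes, and the resulting definitions of the analytic pieces essentially coincide with the paper's $\Lambda^+_{kj}$, $\Lambda^-_{kj}$.

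The problem is in the formula you write down for $\Lambda_{kj}$. If you trace the bookkeeping to the end, all the $\delta_{k,j}$ contributions that survive \emph{outside} the $\epsilon^n$ factor add up to exactly $\lambda^-\delta_{k,j}$ and nothing more. Concretely, once you use \eqref{intnu}, $\int_{Q\setminus\mathrm{cl}\Omega_{p,\epsilon}}\partial_{x_k}u^-_j[\epsilon]\,dx=\delta_{k,j}|Q|_n-\epsilon^n\int_{\partial\Omega}V^-_{j,\tilde\Omega}[\epsilon,\epsilon/\rho(\epsilon)](t)(\nu_\Omega(t))_k\,d\sigma_t$ (with $\tilde\Omega=\mathbb{B}_n(0,R)\setminus\mathrm{cl}\Omega$), and $\int_{\Omega_{p,\epsilon}}\partial_{x_k}u^+_j[\epsilon]\,dx=\epsilon^n\int_{\partial\Omega}U^+_j[\epsilon,\epsilon/\rho(\epsilon)](t)(\nu_\Omega(t))_k\,d\sigma_t$, with no extra $\delta_{k,j}$ appearing from the $u^+_j$ side at all. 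Hence the correct definition is
\[
\Lambda_{kj}[\epsilon,\epsilon']=\frac{1}{|Q|_n}\left(\lambda^+\int_{\partial\Omega}U^+_j[\epsilon,\epsilon'](t)(\nu_\Omega(t))_k\,d\sigma_t-\lambda^-\int_{\partial\Omega}V^-_{j,\tilde\Omega}[\epsilon,\epsilon'](t)(\nu_\Omega(t))_k\,d\sigma_t\right)+\frac{1}{|Q|_n}\int_{\partial\Omega}f(t)t_k\,d\sigma_t\,,
\]
\emph{without} the $\frac{|\Omega|_n}{|Q|_n}(\lambda^+-\lambda^-)\delta_{k,j}$ term you added. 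That term appears only after setting $(\epsilon,\epsilon')=(0,r_\ast)$: by \eqref{eq:rep2} and \eqref{eq:rep7} each of $U^+_j[0,r_\ast]$ and $V^-_{j,\tilde\Omega}[0,r_\ast]$ contains an explicit $t_j$, and $\int_{\partial\Omega}t_j(\nu_\Omega(t))_k\,d\sigma_t=|\Omega|_n\delta_{k,j}$, which yields $\frac{|\Omega|_n}{|Q|_n}(\lambda^+-\lambda^-)\delta_{k,j}$ in the limit. With your stated $\Lambda_{kj}$ this contribution would be counted twice at $(0,r_\ast)$, producing $2\frac{|\Omega|_n}{|Q|_n}(\lambda^+-\lambda^-)\delta_{k,j}$, in contradiction with \eqref{eq:S1}. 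You notice exactly this clash in your last paragraph but leave it unresolved, so the formula for $\Lambda_{kj}$ as written is wrong and must be corrected as above. A secondary point: Theorem~\ref{thm:rep-}(ii) requires $\tilde\Omega\subseteq\mathbb{R}^n\setminus\mathrm{cl}\Omega$, so you cannot take $\tilde\Omega$ to be a ball containing $\mathrm{cl}\Omega$; take $\tilde\Omega=\mathbb{B}_n(0,R)\setminus\mathrm{cl}\Omega$ with $\mathrm{cl}\Omega\subseteq\mathbb{B}_n(0,R)$, which has $\partial\Omega\subseteq\mathrm{cl}\tilde\Omega$ and so still allows you to restrict $V^-_{j,\tilde\Omega}$ to $\partial\Omega$.
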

\begin{proof}Let $U^{+}_j$ be as in Theorem \ref{thm:rep+}. We first note that if $\epsilon \in ]0,\epsilon_1[$, then, by a computation based on the Divergence Theorem, we have
\[
\begin{split}
\int_{\Omega_{p,\epsilon}}\frac{\partial u^+_j[\epsilon](x)}{\partial x_k} \, dx&=\int_{\partial \Omega_{p,\epsilon}}u^+_j[\epsilon](x)(\nu_{\Omega_{p,\epsilon}}(x))_k\, d\sigma_x\\
&=\epsilon^n\int_{\partial \Omega} U^{+}_j\Bigl[\epsilon, \frac{\epsilon}{\rho(\epsilon)}\Bigr](t)(\nu_{\Omega}(t))_k\, d\sigma_t\, .
\end{split}
\]
Then we set
\begin{equation}\label{eq:S2}
\Lambda_{kj}^+[\epsilon,\epsilon']\equiv  \frac{1}{|Q|_n} \int_{\partial \Omega} U^{+}_j[\epsilon,\epsilon'](t)(\nu_{\Omega}(t))_k\, d\sigma_t\, ,
\end{equation}
for all $(\epsilon,\epsilon') \in ]-\epsilon_1,\epsilon_1[\times{\mathcal{U}}_{r_{\ast}}$. Then, by Theorem \ref{thm:rep+}, $\Lambda_{kj}^+$ is a real analytic function from $ ]-\epsilon_1,\epsilon_1[\times{\mathcal{U}}_{r_{\ast}}$ to $\mathbb{R}$. Moreover, by equalities  \eqref{eq:rep2} and \eqref{intnu},
\[
\begin{split}
\Lambda_{kj}^+[0,r_{\ast}]&=\frac{1}{|Q|_n}\int_{\partial \Omega} \tilde{u}_j^+(t)(\nu_{\Omega}(t))_k\, d\sigma_t+\frac{1}{|Q|_n} \int_{\partial\Omega}t_j(\nu_{\Omega}(t))_k\, d\sigma_t\\
&=\frac{1}{|Q|_n} \int_{\partial \Omega} \tilde{u}_j^+(t)(\nu_{\Omega}(t))_k\, d\sigma_t+ \frac{|\Omega|_n }{|Q|_n} \delta_{k,j}\, .
\end{split}
\]

Now let $R>0$ be such that $\mathrm{cl}\Omega \subseteq \mathbb{B}_n(0,R)$ and set $\tilde{\Omega}\equiv \mathbb{B}_n(0,R)\setminus \mathrm{cl}\Omega$. Let $V^{-}_{ j, \tilde{\Omega} }$, $\epsilon^\#_{\tilde{\Omega}}$ be as in Theorem \ref{thm:rep-} (ii). Then a computation based on the Divergence Theorem, on the periodicity of the function which takes $x$ to $u^-_j[\epsilon](x)-x_j$, and on equality \eqref{intnu} shows that
\[
\begin{split}
\int_{Q \setminus \mathrm{cl}\Omega_{p,\epsilon}}& \frac{\partial u^-_j[\epsilon](x)}{\partial x_k} \, dx=\int_{Q \setminus \mathrm{cl}\Omega_{p,\epsilon}} \frac{\partial \bigl(u^-_j[\epsilon](x)-x_j\bigr)}{\partial x_k} \, dx+\delta_{k,j}\bigl(|Q|_n-\epsilon^n |\Omega|_n\bigr)\\
&=\int_{\partial (Q \setminus \mathrm{cl}\Omega_{p,\epsilon})}\bigl(u^-_j[\epsilon](x)-x_j\bigr)(\nu_{Q \setminus \mathrm{cl}\Omega_{p,\epsilon}}(x))_k\, d\sigma_x+\delta_{k,j}\bigl(|Q|_n-\epsilon^n |\Omega|_n\bigr)\\
&=-\int_{\partial  \Omega_{p,\epsilon}}\bigl(u^-_j[\epsilon](x)-x_j\bigr)(\nu_{\Omega_{p,\epsilon}}(x))_k\, d\sigma_x+\delta_{k,j}\bigl(|Q|_n-\epsilon^n |\Omega|_n\bigr)\\
&=-\epsilon^n\int_{\partial \Omega} V^{-}_{ j, \tilde{\Omega} }\Bigl[\epsilon, \frac{\epsilon}{\rho(\epsilon)}\Bigr](t)(\nu_{\Omega}(t))_k\, d\sigma_t+\delta_{k,j}|Q|_n \qquad \forall \epsilon \in ]0,\epsilon^\#_{\tilde{\Omega}}[\, .
\end{split}
\]
Then we set $\epsilon_2\equiv \epsilon^\#_{\tilde{\Omega}}$ and
\[
\Lambda_{kj}^-[\epsilon,\epsilon']\equiv -\frac{1}{|Q|_n} \int_{\partial \Omega} V^{-}_{j, \tilde{\Omega}}[\epsilon,\epsilon'](t)(\nu_{\Omega}(t))_k\, d\sigma_t  \, ,
\]
for all $(\epsilon,\epsilon') \in ]-\epsilon_2,\epsilon_2[\times{\mathcal{U}}_{r_{\ast}}$. By Theorem \ref{thm:rep-} (ii), $\Lambda_{kj}^-$ is a real analytic function from $ ]-\epsilon_2,\epsilon_2[\times{\mathcal{U}}_{r_{\ast}}$ to $\mathbb{R}$. Moreover, by equality \eqref{eq:rep7}, we have
\begin{equation}\label{eq:S3}
\Lambda_{kj}^-[0,r_{\ast}]=-\frac{1}{|Q|_n}\int_{\partial \Omega} \tilde{u}_j^-(t)(\nu_{\Omega}(t))_k\, d\sigma_t-\frac{|\Omega|_n}{|Q|_n}\delta_{k,j}\, .
\end{equation}

Therefore, if we set
\[
\Lambda_{kj}[\epsilon,\epsilon']\equiv \lambda^+ \Lambda_{kj}^+[\epsilon,\epsilon']+\lambda^- \Lambda_{kj}^-[\epsilon,\epsilon']+\frac{1}{|Q|_n}\int_{\partial\Omega}f(t)t_k\,d\sigma_t\, ,
\]
for all $(\epsilon,\epsilon') \in ]-\epsilon_2,\epsilon_2[\times{\mathcal{U}}_{r_{\ast}}$, we deduce that $\Lambda_{kj}$ is a real analytic function from $ ]-\epsilon_2,\epsilon_2[\times{\mathcal{U}}_{r_{\ast}}$ to $\mathbb{R}$ such that equality \eqref{eq:S0} holds for all $\epsilon \in ]0,\epsilon_2[$. Finally, by equalities \eqref{eq:S2}, \eqref{eq:S3}, we deduce the validity of \eqref{eq:S1}.
\qquad\end{proof}

\section{Concluding remarks and extensions}\label{conrem}

By virtue of Theorem \ref{thm:S}, if $\epsilon/\rho(\epsilon)$ has a real analytic continuation around $0$, then the term in the right hand side of equality \eqref{eq:S0} defines a real analytic function of the variable $\epsilon$ in the whole of a neighbourhood of $0$. In particular, we can deduce the existence of $\epsilon_3 \in ]0,\epsilon_2[$ and of a sequence $\{a_i\}_{i=0}^{+\infty}$ of real numbers, such that
\[
\lambda^{\mathrm{eff}}_{kj} [\epsilon]=\lambda^-\delta_{k,j}+ \epsilon^n\Lambda_{kj}[0,r_{\ast}]+\epsilon^{n+1}\sum_{i=0}^{+\infty}a_i \epsilon^i \qquad \forall \epsilon \in ]0,\epsilon_3[\, ,
\]
where the series in the right hand side converges absolutely on $]-\epsilon_3,\epsilon_3[$. Therefore, it is of interest to compute the coefficients $\{a_i\}_{i=0}^{+\infty}$ and this will be the object of future investigations by the authors. We also note that in Dryga{\'s} and Mityushev \cite{DrMi09}, the authors have considered the two-dimensional case with circular inclusions and they have expressed the effective conductivity as a series of the square of the radius $\epsilon$ of the inclusions, under the assumption that, with our notation, $\rho(\epsilon)$ is proportional to $1/\epsilon$ and $f$ and $g$ are equal to $0$. We observe that such an assumption is compatible with condition \eqref{varrhorast}. Hence, in the two-dimensional case, with such a choice of $\rho$, $f$, and $g$, one would try to prove that $\lambda^{\mathrm{eff}}_{kj} [\epsilon]$ can be represented by means of a real analytic function of the variable $\epsilon^2$ (see also Ammari, Kang, and Touibi \cite{AmKaTo05}). If the dimension is greater than or equal to three, instead, one would expect a different behaviour (cf., \textit{e.g.}, McPhedran and McKenzie \cite{McMc78}). Finally, we plan to investigate problem \eqref{bvpe} under assumptions different from \eqref{varrhorast}.

\section*{Acknowledgements} The authors are indebted to Prof.~S.~V.~Rogosin for pointing out problem \eqref{bvpe}. The authors wish to thank Prof.~L.~P.~Castro for several useful discussions.

\end{document}